\newtheorem{thm}{Theorem}[section]
\newtheorem{lem}[thm]{Lemma}
\newtheorem{cor}[thm]{Corollary}
\newtheorem{prop}[thm]{Proposition}
\newtheorem{conj}[thm]{Conjecture}
\theoremstyle{definition}
\newtheorem{defn}[thm]{Definition}
\newtheorem{eg}[thm]{Example}
\theoremstyle{remark}
\newtheorem{rem}[thm]{Remark}
\numberwithin{equation}{section}
 \tikzset{help lines/.style={step=#1cm,very thin, color=gray},
help lines/.default=.5} 
\tikzset{thick grid/.style={step=#1cm,thick, color=gray},
thick grid/.default=1} 
\newcommand{\mat}[1]{\ensuremath{
\left[\begin{matrix}#1
\end{matrix}\right]
}}
\newcommand{\vs}[1]{\vskip .#1 cm} 
\newcommand{\then}{\Rightarrow}
\newcommand{\ifff}{\Leftrightarrow}
\newcommand{\into}{\hookrightarrow}
 \newcommand{\onto}{\twoheadrightarrow}
\DeclareMathOperator{\Hom}{Hom}%
\DeclareMathOperator{\Ext}{Ext}%
\DeclareMathOperator{\End}{End}%
\DeclareMathOperator{\undim}{\underline{dim}}
\newcommand{\field}[1]{\mathbb{#1}}
\newcommand{\ZZ}{\ensuremath{{\field{Z}}}}
\newcommand{\RR}{\ensuremath{{\field{R}}}}
\newcommand{\QQ}{\ensuremath{{\field{Q}}}}
\newcommand{\NN}{\ensuremath{{\field{N}}}}
\newcommand{\commentout}[1]{}
\newcommand{\cB}{\ensuremath{{\mathcal{B}}}}
\newcommand{\cC}{\ensuremath{{\mathcal{C}}}}
\newcommand{\cF}{\ensuremath{{\mathcal{F}}}}
\newcommand{\cG}{\ensuremath{{\mathcal{G}}}}
\newcommand{\cP}{\ensuremath{{\mathcal{P}}}}
\newcommand{\cU}{\ensuremath{{\mathcal{U}}}}
\newcommand{\cV}{\ensuremath{{\mathcal{V}}}}
\newcommand{\cW}{\ensuremath{{\mathcal{W}}}}
\title{Maximal green sequences for cluster-tilted algebras of finite representation type}
\author{Kiyoshi Igusa}
\address{Department of Mathematics, Brandeis University, Waltham, MA 02454}\email{igusa@brandeis.edu}
\address{Department of Mathematics, Northeastern University, Boston, MA 02115}\email{g.todorov@northeastern.edu}
\subjclass[2010]{
16G10; 13F60}
\keywords{c-vectors, forward hom-orthogonal sequences, Jacobian algebras, quivers with potential, cluster mutation, stability conditions, tilted algebras}
\begin{document}

\begin{abstract} We show that, for any cluster-tilted algebra of finite representation type over an algebraically closed field, the following three definitions of a maximal green sequence are equivalent: (1) the usual definition in terms of Fomin-Zelevinsky mutation of the extended exchange matrix, (2) a forward hom-orthogonal sequence of Schurian modules, (3) the sequence of wall crossings of a generic green path. Together with \cite{PartI}, this completes the foundational work needed to support the author's work with P.J. Apruzzese \cite{AI}, namely, to determine all lengths of all maximal green sequences for all quivers whose underlying graph is an oriented or unoriented cycle and to determine which are ``linear''.

In an Appendix, written jointly with G. Todorov, we give a conjectural description of maximal green sequences of maximum length for any cluster-tilted algebra of finite representation type. 
\end{abstract}

\maketitle


\section{Introduction}

This paper is the second of three papers on the problem of ``linearity'' of stability conditions, namely: Is the longest maximal green sequence for an algebra equivalent to one given by a ``central charge''? Although we do not address this question in this paper, we explain the motivation behind the series of papers of which this is a part. The question originates from a conjecture by Reineke \cite{R} which asks if, for a Dynkin quiver, there is a ``slope function'' (a special case of a central charge) making all modules stable. Reineke wanted such a result because, when it holds, his formulas would then give an explicit description of a PBW basis for the quantum group $\cU_v(\frak n^+)$ for the Dynkin quiver. Yu Qiu partially answered this question by constructing a central charge for at least one orientation of each Dynkin quiver making all modules stable \cite{Q}.

The linearity problem is explained in the first paper \cite{PartI} and solved in the third paper of this series \cite{AI} in three cases: (1) a hereditary algebra of affine type $\tilde A_{n-1}$, (2) type $A_n$ any orientation and (3) the $n$-cycle quiver modulo $rad^{n-1}$. The last example is well-known to be cluster-tilted of type $D_n$ and is thus among the ones discussed in this paper.

In the third paper \cite{AI}, we use the description of a maximal green sequence in terms of ``wall crossing'' sequences. The main purpose of the first two papers is to prove, in the three cases considered in \cite{AI}, that the wall crossing description is equivalent to the usual definition of a maximal green sequence in terms of Fomin-Zelevinsky mutation of a skew-symmetrizable matrix called the ``exchange matrix'' \cite{FZ}. This definition is reviewed in the example below. By ``finite type'' we mean the associated cluster algebra has finite type. By \cite{FZ2} this is equivalent to the exchange matrix being mutation equivalent to an acyclic exchange matrix of Dynkin type. We use the cluster category approach \cite{BMRRT}. To any acyclic exchange matrix we can associate an hereditary algebra $H$ and the clusters of the associated cluster algebra are in bijection with the cluster-tilting objects of the cluster category $\cC_H$ of $H$. The endomorphism ring of a cluster-tilting object is called a ``cluster-tilted algebra''. These were introduced in \cite{BMR}. (See the Appendix below for more details, including definitions, for ``tilted algebras'' and ``cluster-tilted algebras''.) A cluster-tilted algebra has finite representation type if and only if it comes from a cluster category of finite type \cite{BMR2} (see also \cite{KZ}). This happens if and only if the hereditary algebra has finite representation type, equivalently it is of Dynkin type. By \cite{FZ2} this happens if and only if the cluster algebra has finite type. By the main results of \cite{PartI}, this implies that the maximal green sequences of cluster algebras of finite type are in bijection with maximal green sequences for a corresponding (not unique) cluster-tilting algebra which are given by sequences of indecomposable modules. We use representation theory to study these sequences.

In this paper we restrict to the case of cluster algebras of finite type coming from skew-symmetric matrices. To each such algebra there is an associated quiver with potential \cite{DWZ}. As explained in the previous paragraph, there is a corresponding cluster-tilted algebra of finite representation type. In the skew-symmetric case we can choose the cluster-tilted algebra to be an algebra over an algebraically closed field. In this case, the cluster-tilted algebra is well-known to be isomorphic to the Jacobian algebra $\Lambda=J(Q,W)$ of a quiver with potential of finite representation type \cite{BIRSm}. (See the example given below.) The main theorem of this paper is the equivalence between the following three definitions of a maximal green sequence for such an algebra $\Lambda$.

\begin{enumerate}
\item The usual definition (mutation of the extended exchange matrix)
\item Complete forward hom-orthogonal sequence of Schurian modules (Definition \ref{def of FHO}). Recall that a module is \emph{Schurian} if its endomorphism ring is a division algebra.
\item Wall crossing sequence of a green path in the cluster complex (Definition \ref{def: green path}).
\end{enumerate}

The precise statement is as follows.

\begin{thm}\label{main thm}
Let $\Lambda=J(Q,W)$ be the Jacobian algebra of a finite type quiver $Q$ with nondegenerate potential $W$ over any field. Let $\beta_1,\cdots,\beta_m$ be a finite sequence of elements of $\NN^n$ where $n$ is the number of vertices of $Q$. Then the following are equivalent.
\begin{enumerate}
\item $\beta_1,\cdots,\beta_m$ are the $c$-vectors of a maximal green sequence for $Q$.
\item There exist Schurian left $\Lambda$-modules $M_1,\cdots,M_m$ with dimension vectors $\undim M_i=\beta_i$ so that
\begin{enumerate}
\item $\Hom_\Lambda(M_i,M_j)=0$ for all $i<j$.
\item No other module can be inserted into this sequence preserving (a).
\end{enumerate}
\item There exists a generic green path $\gamma$ crossing a finite number of stability walls $D(M_1)$, ... , $D(M_m)$ so that $\undim M_i=\beta_i$. 
\end{enumerate}
\end{thm}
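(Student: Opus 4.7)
The plan is to prove the theorem by establishing a cycle of implications $(1)\Rightarrow(2)\Rightarrow(3)\Rightarrow(1)$. The general infrastructure for $(2)\Leftrightarrow(3)$ is largely set up in \cite{PartI}, so the genuinely new content is the identification of c-vectors with dimension vectors of a forward hom-orthogonal sequence of Schurian modules. Two inputs from finite type are essential throughout: the sign-coherence theorem, which ensures that each c-vector occurring in a green sequence lies in $\NN^n$ (so the dimension-vector interpretation makes sense), and the fact that in finite type every such c-vector is realized by a unique indecomposable $\Lambda$-module, which moreover is Schurian.

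For $(1)\Rightarrow(2)$, I would walk through the maximal green sequence one QP-mutation at a time following \cite{DWZ}. At step $i$, let $(Q^i,W^i)$ denote the current QP and $k_i$ the vertex being mutated. The c-vector $\beta_i$ is the dimension vector of the simple $S_{k_i}$ over $J(Q^i,W^i)$, pulled back through the composition of the earlier DWZ mutation functors to produce a $\Lambda$-module $M_i$. In finite type this pullback is a genuine (not merely virtual) indecomposable, and it is Schurian because c-vectors occur with multiplicity one in a green sequence. The hom-orthogonality $\Hom_\Lambda(M_i,M_j)=0$ for $i<j$ is obtained by tracking the mutation data: at stage $i$, the module $M_j$ (with $j>i$) still has a positive c-vector in the current frame, while $M_i$ is about to be flipped to negative, and this one-sided combinatorial compatibility translates into one-sided hom vanishing. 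Maximality of the Schurian sequence is then equivalent to the mutation sequence being non-extendable.

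For $(2)\Rightarrow(3)$, with the forward hom-orthogonal sequence $M_1,\ldots,M_m$ in hand, the stability walls $D(M_i)\subset\RR^n$ are codimension-one cones, and the hom-orthogonality guarantees that a generic affine path can be chosen crossing them in the prescribed order: one selects a linear functional whose values on $\undim M_i$ realize the chosen ordering and lets $\gamma(t)$ have this gradient. Genericity together with the maximality clause of (2) prevents $\gamma$ from crossing any other wall, so $\gamma$ is a generic green path. For $(3)\Rightarrow(1)$, the dictionary between wall-crossings and mutations from \cite{PartI} identifies each $\undim M_i$ with the $i$-th c-vector of the corresponding mutation sequence, and the terminal red region at the end of $\gamma$ forces that mutation sequence to be maximal green.

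The main obstacle, in my view, is in $(1)\Rightarrow(2)$: one must show that the modules $M_1,\ldots,M_m$, each naturally defined over a different intermediate Jacobian algebra $J(Q^{i-1},W^{i-1})$, form a \emph{simultaneously} hom-orthogonal family over the single fixed algebra $\Lambda = J(Q,W)$. The induction has to propagate not only the c-vector combinatorics but also genuine module-theoretic compatibility across all pairs $i<j$, not just consecutive ones. The finite type hypothesis, together with the multiplicity-one property of c-vectors in a maximal green sequence and the existence of the universal DWZ mutation functors between module categories, is exactly what allows this propagation; once it is established, the remaining implications should follow relatively formally from the framework of \cite{PartI}.
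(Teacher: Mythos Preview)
Your outline has the right shape for $(1)\Rightarrow(2)$, and you correctly identify the main obstacle there. The paper resolves it not by ``tracking mutation data'' at the level of $c$-vectors but by invoking a precise equivalence of categories $\psi_k: S_k^\perp \xrightarrow{\sim} {}^{\perp'}S_k'$ between full subcategories of $\Lambda$-mod and $(\mu_k\Lambda)$-mod (this is the key Lemma from \cite{BIRSm}), and then running an induction on the length of the sequence via a rotation lemma. Your sketch would need this equivalence made explicit to actually propagate the hom-orthogonality across all pairs $i<j$.

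There is, however, a genuine gap in your $(2)\Rightarrow(3)$. You propose to pick a single linear functional ordering the $\undim M_i$ and let $\gamma$ have that gradient; but there is no reason such a functional exists (the $\undim M_i$ need not be separable by a single hyperplane family in the correct order), and even if it did, the assertion that ``genericity together with the maximality clause of (2) prevents $\gamma$ from crossing any other wall'' is exactly the hard part, not a formality. Maximality in (2) says no Schurian module can be \emph{inserted} preserving forward hom-orthogonality; it does not obviously say that a straight line in $\RR^n$ avoids every other $D(N)$. The paper's route is quite different: it first establishes the full bijection $(1)\Leftrightarrow(2)$, then uses that bijection to analyze the \emph{compartments} of $L(\Lambda)=\bigcup D(M)$, showing each is convex with exactly $n$ walls which are precisely the $D(M)$ for $\undim M$ a $c$-vector of that compartment. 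The green path is then constructed inductively, one wall at a time, by passing into the adjacent compartment across the wall labeled by the next $c$-vector. So $(2)\Rightarrow(3)$ is not independent of $(1)\Leftrightarrow(2)$; it relies on it.

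Finally, your $(3)\Rightarrow(1)$ defers to \cite{PartI}, but that paper handles only the hereditary case. For the Jacobian algebras here one must show that each wall crossed by $\gamma$ is \emph{strongly Schurian}, i.e.\ that $\cW(\gamma(t_k))=\add M_k$. The paper proves this by another induction, again using the iterated category equivalence $\psi$ from $(1)\Leftrightarrow(2)$ to transport $\cW_{k+1}$ to the mutated algebra and reduce to the simple-module case. So both directions of $(2)\Leftrightarrow(3)$ lean on the mutation machinery of Section~2; the argument is not a clean cycle but rather $(1)\Leftrightarrow(2)$ first, then $(2)\Leftrightarrow(3)$ built on top of it.
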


In Section 2 we prove the equivalence $(1)\Leftrightarrow (2)$ and, in Section 3, we prove $(2)\Leftrightarrow (3)$.
In \cite{PartI} we proved the equivalences $(1)\Leftrightarrow (3)\Leftrightarrow (2)$ for $\Lambda$ any finite dimensional hereditary algebra over any field. The equivalence $(1)\Leftrightarrow (3)$ for cluster-tilted algebras of type $D_n$ and tame hereditary algebras of type $\tilde A_{n-1}$ is what is needed for the last paper \cite{AI}.

In an Appendix, written jointly with Gordana Todorov, we use this equivalence to describe upper and lower bounds on the maximal length of a maximal green sequence for $\Lambda$ and we conjecture that the lower bound is sharp. When this Conjecture (\ref{conjectural description of maximal MGSs}) holds, it gives a representation theoretic description of maximal green sequences of maximal length.

As an example of the conjecture, consider the following quiver.
\[
\xymatrixrowsep{15pt}\xymatrixcolsep{10pt}
\xymatrix{
Q:&& 2\ar[dl]_\alpha\\
&1\ar[rr]^\gamma &&3\ar[lu]_\beta
	}
\]
with potential $W=\alpha\beta\gamma$. Potentials are linear combinations of oriented cycles in the quiver and are only well-defined up to cyclic permutation. The \emph{Jacobian algebra} $J(Q,W)$ is defined to be the path algebra $KQ$ of $Q$ over a fixed field $K$ modulo the partial derivatives of $W$ with respect to each arrow \cite{DWZ}. In this case these relations are:
\[
	\partial_\alpha W:=\beta\gamma=0,\quad \partial_\beta W:=\gamma\alpha=0,\quad \partial_\gamma W:=\alpha\beta=0.
\]
In other words, $\Lambda=J(Q,W)=KQ/rad^2KQ$. 

As we explain in the Appendix, there are three ``tilted algebras'' associated to $\Lambda$. They are given by ``cutting'' the quiver, i.e., by deleting one of the three edges of $Q$. Any two of the quivers obtained by cutting are isomorphic. So, we delete edge $\gamma$. We are left with the following quiver:
\[
	Q^\delta:\quad 1\xleftarrow\alpha 2\xleftarrow\beta 3
\]
with one relation $\alpha\beta=0$ since this is the only relation of $Q$ supported on the subquiver $Q^\delta$. The resulting algebra $C=Q^\delta/(\alpha\beta)$ is a tilted algebra of type $A_3$ with 5 indecomposable modules with dimension vectors $(0,0,1), (0,1,1), (0,1,0), (1,1,0), (1,0,0)$. Conjecture \ref{conjectural description of maximal MGSs} then states that a longest maximal green sequence for $Q$ has these vectors as the $c$-vectors of the mutations as we now explain and thus has length 5.

Maximal green sequences (MGS) are defined only in terms of the quiver $Q$ as follows. The \emph{exchange matrix} $B$ of $Q$ is defined to be the skew-symmetric matrix with $ij$th entry $b_{ij}$ equal to the number of arrows $i\to j$ in the quiver $Q$ minus the number of arrows $j\to i$. 
\[
	B=\mat{0 & -1 & 1\\
	1 & 0 & -1\\
	-1 & 1 & 0}
\]
The \emph{extended exchange matrix} $\tilde B$ is given by putting the $3\times 3$ identity matrix $I_3$ below $B$. By Theorem \ref{thm: MSG for An} there is a MGS of length 5 which mutates at the ``$c$-vectors'' which are the dimension vectors of the $C$-modules as follows.
\[
	\tilde B=\mat{0 & -1 & 1\\
	1 & 0 & -1\\
	-1 & 1 & 0\\
	\hline 
	1 & 0 & \color{green}0\\
	0 & 1 & \color{green}0\\
	0 & 0 & \color{green}1} 
	\xrightarrow{\mu_3}
	 \mat{0 & 0 & -1\\
	0 & 0 & 1\\
	1 & -1 & 0\\
	\hline 
	1 & \color{green}0 & 0\\
	0 & \color{green}1 & 0\\
	0 & \color{green}1 & -1} 
	\xrightarrow{\mu_2} 
	 \mat{0 & 0 & -1\\
	0 & 0 & -1\\
	1 & 1 & 0\\
	\hline 
	1 & 0 & \color{green}0\\
	0 & -1 & \color{green}1\\
	0 & -1 & \color{green}0} 
	\xrightarrow{\mu_3} 
	\mat{0 & 0 & 1\\
	0 & 0 & 1\\
	-1 & -1 & 0\\
	\hline 
	\color{green}1 & 0 & 0\\
	\color{green}1 & 0 & -1\\
	\color{green}0 & -1 & 0} 
	\]
	\[
	\xrightarrow{\mu_1} 
	\mat{0 & 0 & -1\\
	0 & 0 & 1\\
	1 & -1 & 0\\
	\hline 
	-1 & 0 & \color{green}1\\
	-1 & 0 & \color{green}0\\
	0 & -1 & \color{green}0}	
	\xrightarrow{\mu_3} 
	\mat{0 & -1 & 1\\
	1 & 0 & -1\\
	-1 & 1 & 0\\
	\hline 
	0 & 0 & -1\\
	-1 & 0& 0\\
	0 & -1 & 0}
\]
The \emph{$c$-vectors} are the columns of the bottom half of these exchange matrices. The Fomin-Zelevinsky mutation $\mu_k$ is given by applying the following general rule. We add $b_{ik}|b_{kj}|$ to $b_{ij}$ whenever $b_{ik}$ and $b_{kj}$ have the same sign, then we change the signs of all $b_{ik},b_{kj}$ \cite{FZ}. The mutation $\mu_k$ is \emph{green} if the $k$th $c$-vector is positive, i.e., all its entries are nonnegative. The last matrix has no positive $c$-vectors. So, the green mutation sequence must stop. This is therefore a \emph{maximal green sequence}. The standard notation for this mutation sequence is $(3,2,3,1,3)$. However, we prefer to label the sequence with its $c$-vectors because of their representation theoretic interpretation.

The $c$-vectors of this MGS are the dimension vectors of the $C$-modules in the order $M_1,\cdots,M_5$ so that $\Hom_{C}(M_i,M_j)=\Hom_{\Lambda}(M_i,M_j)=0$ for $i<j$. The sixth $\Lambda$-module $M_6$ with $\undim M_6=(1,0,1)$ cannot be inserted into this sequence since $\Hom_\Lambda(M_1,M_6)\neq 0$ and $\Hom_\Lambda(M_6,M_5)\neq 0$. So, the sequence $M_1,\cdots,M_5$ is a \emph{complete forward hom-orthogonal sequence}.

\section{Forward hom-orthogonal sequences}

We will show that, for any Jacobian algebra $\Lambda=J(Q,W)$ of finite representation type, there is a $1$-$1$ correspondence between maximal green sequences and complete forward hom-orthogonal sequences of Schurian modules, also known as ``bricks''. We prove this by induction on the length of a green sequence. The key step (Lemma \ref{lem: inductive step A=B for Q finite type}) is known to hold for any cluster-tilted algebra by \cite{BIRSm}.
But, we go through the details for the benefit of our students.
\subsection{Definition of forward hom-orthogonal sequence}

The results of this subsection hold for $\Lambda$ any finite dimensional algebra over a field.

In the following definition we use the notation $\cF(M):=M^\perp$ for the full subcategory of $\Lambda\text-mod$ of all $Y$ so that $\Hom_\Lambda(M,Y)=0$ and $\cG(M):=\,^\perp\cF(M)$ is the full subcategory of $\Lambda\text-mod$ of all $X$ so that $\Hom_\Lambda(X,Y)=0$ for all $Y\in\cF(M)$. Then $\cF(M)=\cG(M)^\perp$. So, $(\cG(M),\cF(M))$ is a \emph{torsion pair} in $\Lambda\text-mod$ where $\cG(M)$ (resp. $\cF(M)$) is the \emph{torsion class} (resp., \emph{torsion-free class}). (See, e.g., \cite{AR}.) 

The basic properties of the {torsion class} $\cG(M)$ are that $\cG(M)$ is closed under extensions (in particular, direct sums) and also closed under taking quotients. For example, any quotient module $N$ of $M$ lies in $\cG(M)$ and therefore, $\cG(N)\subset\cG(M)$ and $\cF(N)=N^\perp\supset M^\perp=\cF(M)$. We also need the fact that, for any $\Lambda$-module $X$, there is a \emph{canonical short exact sequence}
\[
	0\to rX\to X\to tX\to 0
\]
where $rX\in \cG(M)$ and $tX\in \cF(M)$. The canonical sequence is easy to construct: $rX$ is simply the sum of all submodules of $X$ which are objects of $\cG(M)$. Since $\cG(M)$ is closed under direct sums and quotients, $rX$ is an object in $\cG(M)$.


\begin{prop}\label{prop: HN stratification with 3 strata}
Suppose $\Hom_\Lambda(X,Y)=0$ and $\cC=X^\perp\cap\,^\perp Y$. 
\begin{enumerate}
\item If $\cC\neq0$ then $\cC$ contains a Schurian object.
\item $\cG(X)=\,^\perp \cC\cap\,^\perp Y$.
\end{enumerate}
\end{prop}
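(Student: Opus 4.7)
The plan is to exploit two basic closure properties of the categories involved. The class $X^\perp = \cF(X)$ is closed under submodules, since $\Hom_\Lambda(X,-)$ is left exact; and $\,^\perp Y$ is closed under quotients, since $\Hom_\Lambda(-,Y)$ is left exact and contravariant. Their intersection $\cC$ is therefore closed under the operation $f \mapsto \im f$ for $f \in \End_\Lambda(Z)$ with $Z \in \cC$, because the image of an endomorphism is simultaneously a submodule and a quotient of $Z$. This single observation will drive both parts.

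For (1), I would pick a nonzero $Z \in \cC$ of minimal composition length. If $\End_\Lambda(Z)$ were not a division ring, there would exist $f \in \End_\Lambda(Z)$ that is neither zero nor invertible; in the finite-dimensional setting this forces $0 \neq \im f \subsetneq Z$. By the observation above $\im f \in \cC$, contradicting minimality. Hence such a $Z$ is Schurian.

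For (2), the inclusion $\cG(X) \subset \,^\perp \cC \cap \,^\perp Y$ is formal: the hypothesis $\Hom_\Lambda(X,Y) = 0$ places $Y$ in $\cF(X)$, and $\cC \subset X^\perp = \cF(X)$ by definition, so every object of $\cG(X) = \,^\perp \cF(X)$ is left-orthogonal to $Y$ and to all of $\cC$. The substantive direction is the reverse inclusion. Given $W \in \,^\perp \cC \cap \,^\perp Y$ and an arbitrary $Z \in X^\perp$, I need $\Hom_\Lambda(W,Z) = 0$, which I would prove by induction on $\dim Z$. If $Z \in \,^\perp Y$ then $Z \in \cC$ and the vanishing holds by the hypothesis $W \in \,^\perp \cC$. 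Otherwise fix a nonzero $g \colon Z \to Y$ and set $K = \ker g$. Applying $\Hom_\Lambda(W,-)$ to $0 \to K \to Z \to \im g \to 0$, the inclusion $\im g \hookrightarrow Y$ together with $W \in \,^\perp Y$ yields $\Hom_\Lambda(W, \im g) = 0$, hence $\Hom_\Lambda(W,K) \cong \Hom_\Lambda(W,Z)$. Since $K \in X^\perp$ with $\dim K < \dim Z$, the inductive hypothesis finishes the argument.

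The main subtlety is precisely the asymmetry of the closure properties just noted: $\cC$ is in general neither a torsion class nor a torsion-free class, so neither of the standard torsion-theoretic filtrations on $Z$ is directly available. In (2) this is what forces the ad hoc inductive peeling of $Y$-subobjects off of $Z$; in (1) it is the reason one cannot simply pass to a simple submodule or quotient and must instead use the image of an endomorphism, which happens to be both.
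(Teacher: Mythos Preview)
Your proof is correct. Part (1) is essentially identical to the paper's argument.

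For part (2) your route differs from the paper's. The paper exploits the torsion pair $(\cG(X),\cF(X))$ directly: given $M\in\,^\perp\cC\cap\,^\perp Y$, take the canonical sequence $0\to rM\to M\to tM\to 0$ with $rM\in\cG(X)$ and $tM\in\cF(X)=X^\perp$. Since $\,^\perp\cC$ and $\,^\perp Y$ are closed under quotients, $tM$ lies in $X^\perp\cap\,^\perp\cC\cap\,^\perp Y$; but $X^\perp\cap\,^\perp Y=\cC$, so $tM\in\cC\cap\,^\perp\cC$, forcing $\Hom(tM,tM)=0$ and hence $tM=0$. Thus $M=rM\in\cG(X)$ in one stroke. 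Your induction on $\dim Z$, peeling off maps to $Y$, reaches the same conclusion without invoking the canonical torsion decomposition of $M$; it is more elementary in that sense but also longer, and your closing remark about the ``ad hoc inductive peeling'' is exactly the symptom of not having used the torsion radical. Both arguments are valid; the paper's is the cleaner one-liner once the torsion pair is on the table.
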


\begin{proof} (1) Let $M$ be a nonzero object of $\cC$ of minimal length. Then we claim that $M$ is Schurian. If not, there would be an endomorphism $f:M\to M$ whose image $f(M)$ is nonzero and of smaller length than $M$. But $f(M)\in X^\perp$, being a submodule of $M$ and $f(M)\in \,^\perp Y$ since it is a quotient object of $M$. Therefore, $f(M)$ lies in $\cC$ which contradicts the minimality of $M$.

(2) $\cG(X)\subset \,^\perp Y$ since $Y\in \cF(X)$. $\cG(X)\subset \,^\perp \cC$ since $\cC\subset  X^\perp=\cF(X)$. Therefore, $\cG(X)\subset \,^\perp\cC\cap \,^\perp Y$. Conversely, let $M\in \,^\perp \cC\cap\,^\perp Y$. Then, in the canonical short exact sequence $0\to rM\to M\to tM\to 0$ for the torsion pair $(\cG(X),\cF(X))$, $tM\in  \,^\perp \cC\cap\,^\perp Y$ since $tM$ is a quotient of $M\in \,^\perp \cC\cap\,^\perp Y$. Also, $tM\in \cF(X)= X^\perp$ by definition of the canonical sequence. So, $tM\in X^\perp\cap \,^\perp\cC\cap\,^\perp Y=\cC\cap \,^\perp \cC=0$. So, $M=rM\in\cG(X)$.
\end{proof}

\begin{defn}\label{def of FHO}
A \emph{forward hom-orthogonal (FHO) sequence} in $\Lambda\text-mod$ is a finite sequence of Schurian modules $M_1, \cdots, M_m$ so that
\begin{enumerate}
\item $\Hom_\Lambda(M_i,M_j)=0$ for all $1\le i<j\le m$.
\item The sequence is maximal in $\cG(M)$ where $M=M_1\oplus \cdots\oplus M_m$. By \emph{maximal} we mean that no other Schurian object in $\cG(M)$ can be inserted into the sequence $M_1,\cdots,M_m$ preserving property (1).
\end{enumerate}
We say that a forward hom-orthogonal sequence is \emph{complete} if $\cG(M)=\Lambda\text-mod$ or, equivalently, $\cF(M)=0$ where we generally let $M$ denote $M_1\oplus\cdots\oplus M_m$ when it is clear from the context which forward hom-orthogonal sequence is being considered. If the sequence $(M_i)$ satisfies only (1) we call it \emph{weakly forward hom-orthogonal}. Note that, by Proposition \ref{prop: HN stratification with 3 strata}, property (2) is equivalent to the condition that $\cG(M)\cap(M_1\oplus \cdots \oplus M_k)^\perp\cap\,^\perp(M_{k+1}\oplus\cdots\oplus M_m)=0$ for all $k$ since, if this intersection were nonzero, by \ref{prop: HN stratification with 3 strata}(1) it would contain a Schurian object which could be inserted between $M_k$ and $M_{k+1}$ in the FHO sequence.
\end{defn}

As a consequence of Proposition \ref{prop: HN stratification with 3 strata} we have the following.

\begin{cor}\label{cor: max weakly forward hom-orth is complete}
Let $M_1,\cdots,M_m$ be a weakly FHO sequence of Schurian $\Lambda$-modules which is \emph{maximal} in the sense that it is not a proper subsequence of any other weakly FHO sequence. Then $M_1,\cdots,M_m$ is complete.
\end{cor}

\begin{proof}
It follows from Definition \ref{def of FHO} that $M_1,\cdots,M_m$ is FHO in $\cG(M)$. It remains to show that $\cG(M)=\Lambda\text-mod$. Suppose not. Then, $\cF(M)\neq0$ and, by Proposition \ref{prop: HN stratification with 3 strata} with $Y=0$ and $X=M$, there would exist a Schurian module in $\cF(M)=M^\perp$. This Schurian module could be added after $M_m$ contradicting its maximality.
\end{proof}

\begin{prop}\label{prop:subsequence of hom orth is hom orth}
If $M_1,\cdots,M_m$ is a FHO sequence in $\Lambda\text-mod$ then so is $M_1,\cdots,M_k$ for any $k<m$.
\end{prop}

\begin{proof}
Suppose not. Then there is a Schurian object $X$ in $\cG(M_1\oplus \cdots\oplus M_k)$ which can be inserted into the sequence $M_1,\cdots,M_k$. But $X\in \,^\perp M_j$ for $k<j\le m$ since $X\in \cG(M_1\oplus\cdots\oplus M_k)\subset \cG(M_1\oplus\cdots\oplus M_m)$. So, $X$ can also be inserted into the original sequence $M_1,\cdots,M_m$ giving a contradiction.
\end{proof}

\begin{lem}\label{lem:when Sk is not one of the Mi}
Let $M_1,\cdots,M_m$ be a FHO sequence in $\Lambda\text-mod$ so that no $M_i$ is isomorphic to the simple module $S_j$. Then $\Hom_\Lambda(M_i,S_j)=0$ for all $i$.
\end{lem}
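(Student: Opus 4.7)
The plan is to argue by contradiction, leveraging the maximality clause (2) in Definition \ref{def of forward hom-orthogonal}. I would assume $\Hom_\Lambda(M_i, S_j) \neq 0$ for some $i$, let $a$ be the smallest such index, and then exhibit $S_j$ as a Schurian module that can be inserted into the sequence just before $M_a$, contradicting maximality.

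The heart of the argument is the vanishing $\Hom_\Lambda(S_j, M_\ell) = 0$ for every $\ell \geq a$. Since $S_j$ is simple, any nonzero map $M_a \to S_j$ is a surjection and any nonzero $S_j \to M_\ell$ is an injection. For $\ell > a$, the composition $M_a \twoheadrightarrow S_j \hookrightarrow M_\ell$ would be a nonzero element of $\Hom_\Lambda(M_a, M_\ell)$, contradicting condition (1). For $\ell = a$, the composition $M_a \twoheadrightarrow S_j \hookrightarrow M_a$ is a nonzero endomorphism of the Schurian module $M_a$; being Schurian it must be an isomorphism, forcing $M_a \cong S_j$ against hypothesis. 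Combined with the minimality of $a$, which gives $\Hom_\Lambda(M_k, S_j) = 0$ for $k < a$, this shows that inserting $S_j$ between $M_{a-1}$ and $M_a$ preserves condition (1).

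To turn this into a genuine violation of maximality inside $\cG(M)$, I would invoke the torsion pair $(\cG(M), \cF(M))$ noted just before Proposition \ref{prop: HN stratification with 3 strata}. Since $S_j$ is simple it lies entirely in one of the two torsion classes, and the assumption $\Hom_\Lambda(M_a, S_j) \neq 0$ rules out $S_j \in \cF(M) = M^\perp$, so $S_j \in \cG(M)$. Being a Schurian object of $\cG(M)$ that can be inserted while preserving (1), the simple module $S_j$ contradicts the maximality of the original sequence, as desired.

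The main obstacle is really the case $\ell = a$ in the hom-vanishing step: it is exactly here that both the Schurian hypothesis on $M_a$ and the assumption $M_a \not\cong S_j$ are essential, preventing the sequence from ``wrapping around'' through $S_j$. Everything else is organizational once the problem is framed as an insertion question.
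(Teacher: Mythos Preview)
Your argument is correct and follows essentially the same route as the paper: assume the contrary, take the minimal index $a$ with $\Hom_\Lambda(M_a,S_j)\neq 0$, and insert $S_j$ just before $M_a$ to contradict maximality. The paper's proof is more terse, simply asserting that $S_j\in\cG(M)$ and $\Hom_\Lambda(S_j,M_k)=0$ for $k\ge a$; you have spelled out the reasons (the composition argument through $S_j$, the Schurian case $\ell=a$, and the torsion-pair dichotomy for a simple module), which is a welcome elaboration but not a different idea.
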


\begin{proof}
Suppose not and let $i$ be minimal so that $\Hom_\Lambda(M_i,S_j)\neq0$. Since $S_j$ is simple, any nonzero morphism $M_i\to S_j$ must be an epimorphism. So, $S_j\in\cG(M)$, where $M=\bigoplus_iM_i$. Also, $\Hom_\Lambda(S_j,M_k)=0$ for all $i\le k\le m$. So, $S_j$ can be inserted into the FHO sequence to the left of $M_i$ contradicting its maximality.
\end{proof}

This Lemma implies that, if a simple module is missing from a FHO sequence, it can be added after the last module.

\begin{prop}
Each complete FHO sequence in $\Lambda\text-mod$ contains every simple $\Lambda$-module.\qed
\end{prop}

\begin{cor}\label{cor: every comple fhos ends in a simple}
If $M_1,\cdots,M_m$ is a complete FHO sequence then $M_m$ is simple.
\end{cor}

\begin{proof}
If $M_m$ were not simple, all simple $\Lambda$-modules would come before $M_m$ in the sequence. Then, $\Hom_\Lambda(S_k,M_m)=0$ for all $k$ which is impossible.
\end{proof}

\begin{lem}\label{lem:FHO of length one is simple}
Let $M$ be a Schurian module. Then $M=M_1$ is a FHO sequence of length $1$ if and only if $M$ is simple.
\end{lem}

\begin{proof}
Let $M=S_i$ be the $i$th simple module. Then $\cF(S_i)$ contains the injective modules $I_j$ for all $j\neq i$. So, $\cG(S_i)$ consists only of iterated self extensions of $S_i$. So, $S_i$ is the only Schurian object in $\cG(S_i)$. So, it is a FHO sequence of length 1.

Conversely, let $M=M_1$ be a FHO sequence of length 1. If $M$ is not simple then, by Lemma \ref{lem:when Sk is not one of the Mi}, $\Hom_\Lambda(M,S_j)=0$ for all $S_j$ which is impossible.
\end{proof}

Proposition \ref{prop:subsequence of hom orth is hom orth}, Lemma \ref{lem:FHO of length one is simple} and Proposition \ref{cor: every comple fhos ends in a simple} imply that every FHO sequence starts with a simple module. This leads to the following very useful result.

\begin{prop}\label{prop: complete iff M1 is simple and M2, etc is maximal}
Let $M_1,\cdots,M_m$ be a sequence of Schurian $\Lambda$-modules. Then the following are equivalent.

(1) $M_1,\cdots,M_m$ is a complete FHO sequence in $\Lambda\text-mod$.

(2) $M_1$ is simple and $M_2,\cdots,M_m$ is a maximal weakly FHO sequence in $M_1^\perp$.

(3) $M_m$ is simple and $M_1,\cdots,M_{m-1}$ is a maximal weakly FHO sequence in $\,^\perp M_m$.
\end{prop}

\begin{proof} Since every complete FHO sequence starts and ends with a simple module, (1) implies (2) and (3).

For the converse, we observe in both cases (2) and (3), every simple module must occur in the sequence. Suppose not. Let $S_k$ be a simple module not isormorphic to any $M_i$. In Case (3), $M_1,\cdots,M_{m-1}$ is FHO by definition since $\,^\perp M_m$ is a torsion class. Therefore, by Lemma \ref{lem:when Sk is not one of the Mi}, $\Hom_\Lambda(M_i,S_k)=0$ for all $i<m$. Also, $\Hom_\Lambda(S_k,M_m)=0$ since $S_k,M_m$ are nonisomorphic simple modules. So, $S_k$ can be added to the sequence $M_1,\cdots,M_{m-1}$ contradicting its maximality as a sequence in $\,^\perp M_m$. In case (2), let $j\ge1$ be maximal so that $\Hom_\Lambda(M_i,S_k)=0$ for $1\le i\le j$. Then, either $j<m$ in which case $\Hom_\Lambda(M_{j+1},S_k)\neq0$ or $j=m$. In either case, $S_k$ can be inserted into the sequence $M_1,\cdots,M_m$ after $M_j$, where $j\ge1$, contradicting the maximality of $M_2,\cdots,M_m$ in $M_1^\perp$. 

This implies that $M_1,\cdots,M_m$ is maximal weakly FHO and thus complete (by Corollary \ref{cor: max weakly forward hom-orth is complete}). Indeed, in Case (2), no module can be inserted before $M_1$ since every module maps to at least one simple module. No Schurian module can be inserted after $M_1$ since that would increase the length of the sequence $M_2,\cdots,M_m$ in $M_1^\perp$ contradicting its maximality. Case (3) is similar. Therefore, the three conditions are equivalent.
\end{proof}

\subsection{Rotation of forward hom-orthogonal sequences} We assume for the rest of the paper that $Q$ is a quiver without loops of oriented 2-cycles which is of finite type in the sense of Fomin and Zelevinsky \cite{FZ}. Let $W$ be a nondegenerate potential for $Q$ in the sense of \cite{DWZ}. Let $\Lambda=J(Q,W)$ be the Jacobian algebra of the quiver with potential $(Q,W)$. Mutation of quivers with potential will be reviewed below (before the proof of Lemma \ref{lem: inductive step A=B for Q finite type}).

We will show, in Theorem \ref{thm: inductive thm implying main thm}, that sequences of $m$ green mutations (i.e., mutations on positive $c$-vectors) of the quiver $Q$ are in bijection with FHO sequences of length $m$ in $J(Q,W)\text-mod$. This will immediately imply the bijection (Corollary \ref{cor: main thm of section 2}):
\[
	\left\{\begin{array}{cc}
	\text{maximal green sequences}\\
	\text{ for $Q$}
	\end{array}
	\right\}
\cong 
	\left\{\begin{array}{cc}
	\text{complete FHO sequences}\\
	\text{ for $J(Q,W)$}
	\end{array}
	\right\}.
\]

We will prove the $m$ mutation statement by induction on $m$ using Lemma \ref{lem: inductive step A=B for Q finite type} below which, by Corollary \ref{cor: rotation lemma}, is an analogue of the Rotation Lemma \cite{BHIT} for complete FHO sequences. Before this we review the mutation process for quivers with potential: $(Q',W')=\mu_k(Q,W)$ assuming that $Q$ is a quiver of finite type with nondegenerate potential $W$. We begin with the observation that, since $Q$ has finite type, it cannot contain the subquiver 
\[
	\tilde A_2:\xymatrix{ i\ar@/^1pc/[rr]\ar[r] &k\ar[r] & j}
\]
since, if it did, mutation at $k$ would produce a double arrow from $i$ to $j$.

Let $I$ be the set of all vertices $i$ for which there is an arrow $\alpha_i:i\to k$ in $Q$. Let $J$ be the set of all vertices $j$ for which there is an arrow $\beta_j:k\to j$ in $Q$. Since $Q$ has finite type, $Q$ has no arrows between elements of $I$ and no arrows between elements of $J$. Otherwise $Q$ would contain a subquiver of type $\tilde A_2$. Additionally, for each $(i, j) \in  I \times J$ there are no arrows $i \rightarrow j$ and there is at most one arrow $j \rightarrow i$.

Let $P$ be the set of all pairs $(i,j)\in I\times J$ for which $Q$ has an arrow $\gamma_{ij}:j\to i$. Let $P'$ be the complement of $P$ in $I\times J$. The mutated quiver $Q'=\mu_kQ$ is obtained from $Q$ by reversing the orientations of the arrows $\alpha_i,\beta_j$ to produce new arrows $\alpha_i^\ast:k\to i$, $\beta_j^\ast:j\to k$, adding an arrow $\gamma_{i'j'}^\ast:i'\to j'$ for each $(i',j')\in P'$ and deleting the arrows $\gamma_{ij}:j\to i$ for all $(i,j)\in P$. This is indicated as follows.
\[
\xymatrix{
Q: & j\ar@/^1pc/[rr]^{\gamma_{ij}} &k\ar[l]^{\beta_j} & i\ar[l]^{\alpha_i}
&& Q'=\mu_k Q: & j'\ar[r]_{\beta_{j'}^\ast}
&k \ar[r]_{\alpha_{i'}^\ast} & i'\ar@/_1pc/[ll]_{\gamma_{i'j'}^\ast}
	}
\]

The potential $W$ is a linear combination of oriented cycles in $Q$. We begin with the general form of such a potential then simplify it by a change of coordinates. Collecting together all terms involving $\gamma_{ij}$ or $\beta_j\alpha_i$ for $(i,j)\in P$, we have:
\[
	W=\sum_{(i,j)\in P}a_{ij}\gamma_{ij}\beta_j\alpha_i+\sum_{(i,j)\in P}A_{ij}\beta_j\alpha_i+\sum_{(i,j)\in P}\gamma_{ij}B_{ij}+ \text{ other terms}
\]
where $a_{ij}$ are nonzero scalars, $A_{ij},B_{ij}$ are linear combinations of path $j\to i$ and $i\to j$ of length at least 2. We claim that, by changing the choice of $\gamma_{ij}$ we may eliminate the terms $A_{ij}$. The new $\gamma_{ij}$ will be equal to $a_{ij}\gamma_{ij}$ plus a converging, possibly infinite sum of paths $j\to i$.

The first step in this simplification process is given by changing the choice of $\gamma_{ij}$ to $\gamma_{ij}':=a_{ij}\gamma_{ij}+A_{ij}$. Equivalently, we make the substitution 
\[
\gamma_{ij}=a_{ij}^{-1}\gamma_{ij}'-a_{ij}^{-1}A_{ij}.
\]
Then we obtain:
\[
	W=\sum_{(i,j)\in P}\gamma_{ij}'\beta_j\alpha_i+\sum_{(i,j)\in P}A_{ij}'\beta_j\alpha_i+\sum_{(i,j)\in P}\gamma_{ij}'B_{ij}'+ \text{ other terms}.
\]
The old terms $A_{ij}\beta_j\alpha_i$ are cancelled. However, new term arise from $\gamma_{ij}B_{ij}=a_{ij}^{-1}\gamma_{ij}'B_{ij}-a_{ij}^{-1}A_{ij}B_{ij}$. The term $a_{ij}^{-1}A_{ij}B_{ij}$ might give a term $A_{ij}'\beta_j\alpha_i$ since $\beta_j\alpha_i$ might be a subword of a term in $A_{ij}$. To fix this we let $\gamma_{ij}'':=\gamma_{ij}'+A_{ij}'$ and substitute again:
\[
	\gamma_{ij}'=\gamma_{ij}''-A_{ij}'.
\]
This process converges because the minimum length of the paths in the sum $A_{ij}$ is going to infinity. To see this, define the ``adjusted length'' of each path in $A_{ij}$ to be its length minus the number of two letter subwords of the form $\beta_j\alpha_i$. When a cycle in the sum $A_{ij}B_{ij}$ becomes a cycle in $A_{ij}'\beta_j\alpha_i$, the length of the path in $A_{ij}'$ is at least equal to the length of the corresponding path in $A_{ij}$ since $B_{ij}$ is a linear combination of paths of length $\ge2$. However, the adjusted length strictly increases since the subword $\beta_j\alpha_i$ has been removed from $A_{ij}$. Therefore, the process converges. So, we may assume $a_{ij}=1$ and $A_{ij}=0$ in the expression for $W$ and we have:
\[
	W=\sum_{(i,j)\in P}\gamma_{ij}\beta_j\alpha_i+F
\]
where $F$ is a linear combination of oriented cycles in $Q$ none of which contain the subword $\beta_j\alpha_i$ for any $(i,j)\in P$. We may also assume that each of these oriented cycles, written as a path, do not start at vertex $k$. Then, if $\alpha_i$ occurs as a letter in one of these paths, the next letter must be $\beta_{j}$ where $(i,j)\in P'$. This implies that, for each $i\in I$, we have
\[
	\partial_{\alpha_i}F=\sum_{j:(i,j)\in P'} F_{ij}\beta_j
\]
where $F_{ij}$ is a linear combination of paths from $j$ to $i$. For each occurrence of the word $\beta_j\alpha_i$ in the terms in $F$ we get one term in $F_{ij}$. With this description we also see that
\[
	\partial_{\beta_j}F=\sum_{i:(i,j)\in P'} \alpha_iF_{ij}.
\]
For each $(i,j)\in P$ let
\[
	G_{ij}:=\partial_{\gamma_{ij}}F
\]

The Jacobian algebra $J(Q,W)$ is the path algebra of $Q$ modulo the following relations.
\[
	(\forall (i,j)\in P)\quad \partial_{\gamma_{ij}}W=0:\qquad\qquad \beta_j\alpha_i=-G_{ij}\qquad\quad 
\]
\begin{equation}\label{partial alpha-i W=0}
	(\forall i\in I)\quad \partial_{\alpha_i}W=0:\qquad \sum_{j:(i,j)\in P} \gamma_{ij}\beta_j+\sum_{j':(i,j')\in P'} F_{ij'}\beta_{j'} =0
\end{equation}
\[
	(\forall j\in J)\quad \partial_{\beta_j}W=0:\qquad \sum_{i: (i,j)\in P} \alpha_i\gamma_{ij}+\sum_{i': (i',j)\in P'} \alpha_{i'}F_{i'j} =0
\]
And, for all other arrows $\delta$, the equation $\partial_\delta W=0$ is just $W_\delta:=\partial_\delta F=0$.

We need the following notation. Let $X$ be a linear combination of paths or oriented cycles in $Q$ which do not start or end at vertex $k$ and which do not contain the subword $\beta_j\alpha_i$ for any $(i,j)\in P$, for example, $X=F,F_{ij},G_{ij}$. We denote by $\widetilde X^\ast$ the corresponding linear combination of paths or cycles in $Q'$ given by replacing each occurrence of the letter $\gamma_{ij}$ in $X$, for $(i,j)\in P$, with $\alpha_i^\ast\beta_j^\ast$ and each occurrence of the pair of letters $\beta_{j'}\alpha_{i'}$, for $(i',j')\in P'$, with $\gamma_{i'j'}^\ast$. 

We need formulas for commuting cyclic derivatives with the operation $\widetilde{(\ )}^\ast$. The best explanation might be by example: Given an oriented cycle $X=(\gamma_{ij})ab(\beta_{j'}\alpha_{i'})de(\gamma_{ij'})gh$ we have $\widetilde X^\ast=
(\alpha_i^\ast\beta_j^\ast)  ab( \gamma_{i'j'}^\ast )de(\alpha_i^\ast\beta_{j'}^\ast )gh$. Then 
\[
\partial_{\alpha_i^\ast} \widetilde X^\ast=\beta_j^\ast  ab( \gamma_{i'j'}^\ast )de(\alpha_i^\ast\beta_{j'}^\ast )gh+
\beta_{j'}^\ast gh (\alpha_i^\ast\beta_j^\ast)  ab( \gamma_{i'j'}^\ast )de
= \beta_j^\ast \widetilde{\partial_{\gamma_{ij}}X}^\ast+\beta_{j'}^\ast \widetilde{\partial_{\gamma_{ij'}}X}^\ast
\]
following the general formula:
\[
	\partial_{\alpha_i^\ast} \widetilde X^\ast=\sum_{j':(i,j')\in P'}\beta_{j'}^\ast \widetilde{\partial_{\gamma_{ij'}}X}^\ast .
\]
Using this formula, we obtain the following calculations.
\begin{equation}\label{eq: derivative of tilde F}
	\forall i\in I:\qquad \partial_{\alpha_i^\ast}\widetilde F^\ast=\sum_{j:(i,j)\in P}\beta_{j}^\ast \widetilde{\partial_{\gamma_{ij}}F}^\ast
	=\sum_{j:(i,j)\in P}\beta_{j}^\ast \widetilde G_{ij}^\ast
\end{equation}
\[
	\forall j\in J:\qquad \partial_{\beta_j^\ast}\widetilde F^\ast=\sum_{i:(i,j)\in P} \widetilde G_{ij}^\ast\alpha_{i}^\ast
\]
\[
	\forall (i',j')\in P': \qquad \partial_{\gamma_{ij}^\ast}\widetilde F^\ast=\widetilde F_{ij}^\ast
\]
All other arrows $\delta$ in $Q'$ are also arrows in $Q$ and the operations $\partial_\delta$ and $\widetilde{(\ )}^\ast$ commute:
\[
	\partial_\delta \widetilde F^\ast=\widetilde{\partial_\delta F}^\ast.
\]

\begin{prop}\label{prop: formula for mutation of W}  
The mutation $\mu_k(Q,W)$ of this quiver with potential in direction $k$ is equivalent to $(Q',W')$ where $Q'$ is defined above and
\[
	W'=-\sum_{(i',j')\in P'} \alpha_{i'}^\ast\beta_{j'}^\ast\gamma_{i'j'}^\ast+\widetilde F^\ast
\]
where the notation $\widetilde F^\ast$ is defined above. The Jacobian algebra $J(Q',W')$ is the path algebra of $Q'$ modulo the relations given as follows.
\[
	(\forall i\in I)\quad \partial_{\alpha_i^\ast}W'=0:\qquad -\sum_{j':(i,j')\in P'} \beta_{j'}^\ast\gamma_{ij'}^\ast +
	\sum_{j:(i,j)\in P}\beta_{j}^\ast \widetilde G_{ij}^\ast=0
\]
\[
	(\forall j\in J)\quad \partial_{\beta_j^\ast}W'=0:\qquad -\sum_{i': (i',j)\in P'} \gamma_{i'j}^\ast\alpha_{i'}^\ast+\sum_{i:(i,j)\in P} \widetilde G_{ij}^\ast\alpha_{i}^\ast =0
\]
\[
	(\forall (i',j')\in P')\quad \partial_{\gamma_{i'j'}^\ast}W'=0:\qquad\qquad \alpha_{i'}^\ast\beta_{j'}^\ast=\widetilde F_{i'j'}^\ast\qquad\quad 
\]
And, for all other arrows $\delta$, the equation $\partial_\delta W'=0$ is just $\widetilde W_\delta^\ast=0$ where $W_\delta=\partial_\delta  F$.
\end{prop}

\begin{proof}
We follow the original definition of mutation of a quiver with potential as given in \cite{DWZ}. The first step is to replace the pair of letters $\beta_j\alpha_i$ with a new letter $\gamma_{ij}^\ast$ for all $(i,j)\in I\times J=P\coprod P'$. We denote the result of such a procedure with a tilde $\widetilde\,$. We get:
\[
	\widetilde W= \sum_{(i,j)\in P} \gamma_{ij}\gamma_{ij}^\ast + \widetilde F
\]
where $\widetilde F$ is a linear combination of oriented cycles of length at least 3. The next step is usually to add new terms $\alpha_i^\ast\beta_j^\ast\gamma_{ij}^\ast$ for all $(i,j)\in I\times J$. However, it is more convenient to subtract these terms to get:
\[
	\widetilde W^+=-\sum_{(i,j)\in I\times J} \alpha_i^\ast\beta_j^\ast\gamma_{ij}^\ast+ \sum_{(i,j)\in P} \gamma_{ij}\gamma_{ij}^\ast + \widetilde F
\]
This deviation from standard procedure is justified since the letters $\alpha_i^\ast$ occur only in this new sum. By replacing each $\alpha_i^\ast$ with its negative, we change the signs of these new terms.

The next step is to change the basis again by replacing $\gamma_{ij}$ with $\gamma_{ij}'+\alpha_i^\ast\beta_j^\ast$ for each $(i,j)\in P$. This eliminates those terms in the first sum corresponding to $(i,j)\in P$ to give:
\[
	\widetilde W'=-\sum_{(i,j)\in P'} \alpha_i^\ast\beta_j^\ast\gamma_{ij}^\ast+ \sum_{(i,j)\in P} \gamma_{ij}'\gamma_{ij}^\ast + \widetilde F'
\]
The term $\widetilde F'$ contains the term that we want $\widetilde F^\ast$ plus other unwanted terms involving $\gamma_{ij}'$ for $(i,j)\in P$. So, $\widetilde F'=\widetilde F^\ast+\sum \gamma_{ij}'Z_{ij}$ where $Z_{ij}$ is a linear combination of paths of length at least 2 from $i$ to $j$ for each $(i,j)\in P$. Then $\widetilde W'$ can be rewritten as:
\[
	\widetilde W'=-\sum_{(i,j)\in P'} \alpha_i^\ast\beta_j^\ast\gamma_{ij}^\ast+ \sum_{(i,j)\in P} \gamma_{ij}'\gamma_{ij}^\ast+\sum_{(i,j)\in P} \gamma_{ij}'Z_{ij} + \widetilde F^\ast.
\]
Since $\gamma_{ij}^\ast$ only occurs in the second sum, we can change coordinates, replacing $\gamma_{ij}^\ast$ with $\gamma_{ij}'^{\ast}-Z_{ij}$ to eliminate the third, unwanted sum. Then
\[
	\widetilde W'=-\sum_{(i,j)\in P'} \alpha_i^\ast\beta_j^\ast\gamma_{ij}^\ast+ \sum_{(i,j)\in P} \gamma_{ij}'\gamma_{ij}'^\ast+ \widetilde F^\ast.
\]
Now the letters $\gamma_{ij}'$ and $\gamma_{ij}'^\ast$ occur only in the second sum which is a sum of 2-cycles. The final step is to ``reduce'' this expression by deleting these isolated 2-cycles to give
\[
	 W'=-\sum_{(i,j)\in P'} \alpha_i^\ast\beta_j^\ast\gamma_{ij}^\ast+  \widetilde F^\ast
\]
as claimed. Computation of the cyclic derivatives of $W'$ is given by Formulas \ref{eq: derivative of tilde F}.
\end{proof}


The following lemma is proved in much greater generality in \cite[Sec 7]{BIRSm} where the statement is demonstrated for any cluster-tilted algebra of the form $\Lambda=J(Q,W)$. The statement uses an involution $\varphi_k$ on $\ZZ^n$ defined as follows.

For any $x\in \ZZ^n$ and any $k\in Q_0$, let $\varphi_k(x)=y\in \ZZ^n$ be given by $y_i=x_i$ for $i\neq k$ and
\begin{equation}\label{eq: y=Xkx}
	y_k=-x_k+\sum_{k\to j} x_j
\end{equation}
where the sum is over all arrows $k\to j$ in $Q$.
Since $Q$ has no loops, $\varphi_k$ is an involution: $x=\varphi_k(y)$.

\begin{lem}\label{lem: inductive step A=B for Q finite type}
Let $S_k$ be a simple $\Lambda$-module for $\Lambda=J(Q,W)$ and let $\Lambda'$ be the Jacobian algebra of $(Q',W')=\mu_k(Q,W)$ where $W$ is a nondegenerate potential for $Q$. Suppose that $Q$ has finite type. Then there is an equivalence of full subcategories $\psi_k:S_k^\perp\cong \,^{\perp}S_k'$ where
\[
	S_k^\perp=\{X\in \Lambda\text-mod\,|\, \Hom_\Lambda(S_k,X)=0\}
\]
\[
	\,^{\perp}S_k'=\{Y\in \Lambda'\text-mod\,|\, \Hom_{\Lambda'}(Y,S_k')=0\}
\]
with $S_k,S_k'$ being the simple $\Lambda,\Lambda'$-modules at vertex $k$. Furthermore, the dimension vectors of $X$ and $Y=\psi_k(X)$ are related by
\[
	\undim \psi_k(X)=\varphi_k(\undim X).
\]
where $\varphi_k$ is the automorphism of $\ZZ^n$ given by \eqref{eq: y=Xkx} above.
\end{lem}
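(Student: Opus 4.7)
The plan is to construct $\psi_k$ via the Derksen-Weyman-Zelevinsky mutation of representations, and to obtain the inverse from the involutivity of QP mutation. First I would translate both $\Hom$-vanishing conditions into linear algebra at the vertex $k$. For a representation $X$ of $Q$ write $\alpha_k^X : \bigoplus_{i\to k} X_i \to X_k$ and $\beta_k^X : X_k \to \bigoplus_{k\to j} X_j$ for the assembled incoming and outgoing maps. A direct calculation from the definition of $S_k$ gives $\Hom_\Lambda(S_k,X) = \ker(\beta_k^X)$ and $\Hom_\Lambda(X,S_k) \cong \coker(\alpha_k^X)^\ast$, so $X \in S_k^\perp$ iff $\beta_k^X$ is injective; dually, since the arrows at $k$ are reversed in $Q'$, $Y \in {}^{\perp'}S_k'$ iff the corresponding incoming map in $Q'$, namely $\bigoplus_{k\to j \text{ in }Q} Y_j \to Y_k$, is surjective.

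Next I would define $\psi_k(X)$ for $X \in S_k^\perp$ by the DWZ recipe: keep $\psi_k(X)_i = X_i$ for $i\ne k$, and set $\psi_k(X)_k = \coker(\beta_k^X)$, which is a genuine cokernel precisely because $\beta_k^X$ is injective. Arrows of $Q'$ not incident to $k$ inherit their action from $X$; the reversed arrows (outgoing in $Q$, incoming in $Q'$) act by the canonical surjection $\bigoplus_{k\to j} X_j \twoheadrightarrow \coker(\beta_k^X)$, which is what places $\psi_k(X)$ into ${}^{\perp'}S_k'$; and the ``shortcut'' arrow $[ij]$ of $Q'$ introduced for each path $i\to k\to j$ in $Q$ acts by the composition $X_i \to X_k \to X_j$ coming from $X$. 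The key technical content is that the relations of the mutated potential $W'$ are then satisfied on $\psi_k(X)$. Verifying this is the main obstacle: $W'$ is built by first forming a pre-mutated potential and then applying the DWZ splitting theorem to strip off $2$-cycle summands, and one must check that both the pre-mutation relations and the reduction are compatible with the representation-level construction. The finite-type hypothesis enters precisely here, as it guarantees that $W$ is nondegenerate under all iterated mutations so that $W'$ is well defined up to right-equivalence, and it keeps every module finite dimensional so no completion-theoretic subtleties arise.

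The inverse functor is obtained by applying the same construction to representations of $(Q',W')$, mutated again at vertex $k$. Since $\mu_k\mu_k(Q,W)$ is right-equivalent to $(Q,W)$ by \cite{DWZ}, and right-equivalences induce isomorphisms of Jacobian algebras, $\psi_k\circ\psi_k$ is naturally isomorphic to the identity on $S_k^\perp$. Fullness and faithfulness follow from the observation that a morphism $f : X \to X'$ in $S_k^\perp$ descends canonically to a morphism $\coker(\beta_k^X) \to \coker(\beta_k^{X'})$ of the mutated representations, and this descent is reversed by the inverse construction.

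Finally, the dimension vector formula is immediate: dimensions at $i \ne k$ are unchanged, and at vertex $k$,
\[
	\dim\psi_k(X)_k = \dim\coker(\beta_k^X) = \sum_{k \to j}\dim X_j - \dim X_k = -x_k + \sum_{k\to j} x_j,
\]
which is exactly the $k$-th coordinate of $\varphi_k(\undim X)$.
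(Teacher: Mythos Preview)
Your outline follows the same strategy as the paper---mutation of representations at vertex $k$, restricted so that the new vertex space is simply a cokernel---but it leaves a genuine gap and mislocates the role of the finite-type hypothesis.

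First, your description of $\psi_k(X)$ is incomplete. You specify the incoming arrows at the new vertex (the canonical surjection $\bigoplus_{k\to j}X_j \twoheadrightarrow \coker\beta_k^X$) and the shortcut arrows $[ij]$, but you never say how the \emph{outgoing} arrows at $k$ in $Q'$ act: for each arrow $i\to k$ in $Q$ one needs a linear map $\alpha_i^\ast : \coker\beta_k^X \to X_i$. These are not determined by the cokernel alone. The paper obtains them by observing that the Jacobian relation $\partial_{\alpha_i}W = 0$ on $X$ forces a certain map $\bigoplus_j X_j \to X_i$ to vanish on the image of $\beta_k^X$, hence to factor through the cokernel. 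Without this step you do not yet have a representation of $Q'$, and the later verification of the $W'$-relations cannot begin.

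Second, you wave at ``the relations of $W'$'' but do not engage with the $2$-cycle cancellation. In the pre-mutated quiver there is a shortcut arrow $[ij]$ for every path $i\to k\to j$; when $Q$ already had an arrow $j\to i$, this creates a $2$-cycle that the reduction deletes. The surviving arrows, and hence the relations of $W'$, depend on which pairs $(i,j)$ already carried an arrow. The paper handles this by partitioning $I\times J$ into $P$ (pairs with a pre-existing arrow $j\to i$) and its complement $P'$, writing $W$ and $W'$ explicitly in terms of this partition, and checking each relation of $J(Q',W')$ by hand.

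Finally, the finite-type hypothesis is not used where you say. Nondegeneracy of $W$ is \emph{assumed} in the statement; finite type is invoked to guarantee that the local picture at $k$ is as simple as possible (no arrows within $I$ or within $J$, no arrows $i\to j$, at most one arrow $j\to i$), so that the potential near $k$ takes the explicit shape the paper writes down. The paper does not appeal to the general DWZ representation-mutation machinery at all; it carries out the entire construction and verification directly, which is feasible precisely because finite type constrains the local form of $W$.
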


\begin{proof}

Any representation $X$ of $J(Q,W)$ is given by vector spaces $X_v$ for all vertices $v$ of $Q$ and linear maps $X_v\to X_w$ for arrows $v\to w$ satisfying the relations $\partial_\alpha W=0$. In particular, the arrows $k\to j$, for $j\in J$, induce linear maps $\beta_j:X_k\to X_j$. If $X\in S_k^\perp$, the sum of these linear maps will give a monomorphism $(\beta_j):X_k\to \bigoplus X_j$. Let $Y_k$ with maps $\beta_j^\ast:X_j\to Y_k$ be the cokernel. This gives a functorial short exact sequence:
\begin{equation}\label{functorial exact sequence}
	0\to X_k\xrightarrow{(\beta_j)}\bigoplus_{j\in J} X_j\xrightarrow{(\beta_j^\ast)}Y_k\to 0.
\end{equation}

Let $Y=\psi_k(X)$ be the representation of $\Lambda'=J(Q',W')$ given as follows. For each vertex $s\neq k$, let $Y_s=X_s$. $Y_k$ is given in \eqref{functorial exact sequence}. Then the exact sequence \eqref{functorial exact sequence} immediately implies
\[
	\undim Y=\varphi_k(\undim X)
\]
where $\varphi_k$ is the automorphism of $\ZZ^n$ given by \eqref{eq: y=Xkx} above. For each $j\in J$, the map $\beta_j^\ast:Y_j\to Y_k$ is given in \eqref{functorial exact sequence} since $Y_j=X_j$. Since these give an epimorphism $\bigoplus Y_j\onto Y_k$, we will have $\Hom_{\Lambda'}(Y,S_k)=0$. So $Y\in\,^{\perp}S_k'$ (once we show that $Y$ is a representation of $J(Q',W')$). For each $(i',j')\in P'$, let $\gamma_{i'j'}^\ast= \beta_{j'}\alpha_{i'}: Y_{i'}\to Y_{j'}$. If $\delta:s\to t$ is an arrow in $Q'$ which is not equal to $\alpha_i^\ast,\beta_j^\ast$ or $\gamma_{ij}^\ast$ then $\delta$ is also an arrow in $Q$ and we define $\delta:Y_s\to Y_t$ to be the equal to the morphism $\delta:X_s\to X_t$. It remains to define $\alpha_i^\ast:Y_k\to Y_i$ for all $i\in I$.

Given $i\in I$, consider the morphism ${(\gamma_{ij},F_{ij'})}:\bigoplus X_j\to X_i$ given by $\gamma_{ij}$ on $Y_j$ for all $j$ so that $(i,j)\in P$ and by $F_{ij'}$ on $Y_{j'}$ for all $j'$ so that $(i,j')\in P'$. Then, Equation \eqref{partial alpha-i W=0} implies that the composition ${(\gamma_{ij},F_{ij'})}\circ (\beta_j)=0$. Therefore, there is a unique induced map $\alpha_i^\ast:Y_k\to Y_i=X_i$ satisfying the following. (See Equation \eqref{definition of alpha i ast}.)
\begin{enumerate}
\item $\alpha_i^\ast \beta_j^\ast=\gamma_{ij}$ when $(i,j)\in P$,
\item $\alpha_i^\ast \beta_{j'}^\ast=F_{ij'}$ when $(i,j')\in P'$.
\end{enumerate}
\begin{equation}\label{definition of alpha i ast}
\xymatrix{
0 \ar[r] &
	X_k\ar@/_1pc/[rd]_(.35)0\ar[r]^{(\beta_j)} &
	\bigoplus X_j\ar[d]_(.4){(\gamma_{ij},F_{ij'})}\ar[r]^{(\beta_j^\ast)}&
Y_k \ar[r]\ar@{-->}[dl]^(.4){\alpha_i^\ast}& 0\\
 &  & X_i=Y_i &  
	}
\end{equation}


We need to verify that $Y$, with these maps, satisfies the relations for $J(Q',W')$. Note that since $\alpha_i^\ast \beta_j^\ast=\gamma_{ij}$ for any $(i,j)\in P$ and $\gamma_{i'j'}^\ast= \beta_{j'}\alpha_{i'}$ for $(i',j')\in P'$, the map $\lambda:X_s\to X_t$ for any path $\lambda$ in $Q$ with $s,t\neq k$ which does not contain a subpath $\alpha_i \beta_j$ for any $(i,j)\in P$ is equal to the map $\widetilde\lambda^\ast:Y_s\to Y_t$ for $Y$:
\[
	\widetilde\lambda^\ast=\lambda:X_s=Y_s\to X_t=Y_t.
\]
In particular, this implies the required condition that $\widetilde W_\delta^\ast=0$ for $Y$ for any arrow $\delta$ not equal to $\alpha_i^\ast,\beta_j^\ast$ or $\gamma_{i'j'}^\ast$ since $\widetilde W_\delta^\ast=W_\delta=0$ on $X$ and thus on $Y$.

Condition (2) above is equivalent the required condition $\alpha_{i'}^\ast \beta_{j'}^\ast=\widetilde F^\ast_{i'j'}$ for $(i',j')\in P'$ ($\partial_{\gamma_{i'j'}^\ast}W'=0$) since $\widetilde F^\ast_{i'j'}= F_{i'j'}$ on $Y$.

To verify the relation $\partial_{\alpha_i^\ast}W'=0$ we substitute $\gamma_{ij'}^\ast=\beta_{j'}\alpha_i$ for $(i,j')\in P'$ and $\widetilde G_{ij}^\ast=G_{ij}=-\beta_j\alpha_i$ for $(i,j)\in P$ (from \eqref{partial alpha-i W=0}). Then, the required condition $\partial_{\alpha_i^\ast}W'=0$ becomes:
\[
	\sum_{(i,j')\in P'}\beta_{j'}^\ast\beta_{j'}\alpha_i+\sum_{(i,j)\in P}\beta_j^\ast\beta_j\alpha_i=0
\]
But $P\coprod P'=I\times J$. So, this is the same as $\sum_{ij} \beta_j^\ast\beta_j\alpha_i=0$ which follows from the fact that $\sum_j\beta_j^\ast\beta_j=0$.

Similarly, the relation $\partial_{\beta_j^\ast}W'=0$ is equivalent to the condition that $\sum_{ij} \beta_j\alpha_i\alpha_i^\ast=0$. Since the $\beta_j$ together form a monomorphism and the $\beta_j^\ast$ together form an epimorphism, this condition is equivalent to the condition $\sum_i \alpha_i\alpha_i^\ast\beta_j^\ast=0$ for each $j\in J$. But this is equivalent to the condition $\partial_{\beta_j}W=0$ in \eqref{partial alpha-i W=0} using the substitutions (1) and (2) above.

This shows that $Y$ is a representation of $J(Q',W')$ and $Y\in \,^{\perp}S_k'$. By naturality of the cokernel $Y_k$ in \eqref{functorial exact sequence}, the assignment $Y=\psi_k(X)$ defines a functor $\psi_k:S_k^\perp\to \,^{\perp}S_k'$. In the opposite direction, for each $Y\in\,^{\perp}S_k'$, we let $X_k$ be the kernel of the epimorphism $(\beta_j^\ast):\bigoplus Y_j\to Y_k$. Let $\alpha_i:X_i=Y_i\to X_k$ be the unique morphism so that $\beta_{j'}\alpha_i=\gamma_{ij'}^\ast$ for $(i,j')\in P'$ and $\beta_j\alpha_i=-G_{ij}$ for $(i,j)\in P$. Finally, $\gamma_{ij}=\alpha_i^\ast\beta_j^\ast$ for all $(i,j)\in P$. The verification that $X\in S_k^\perp$ is analogous to the above discussion and it is clear that $Y\mapsto X$ gives the inverse of the functor $\psi_k$.
\end{proof}

\begin{lem}\label{lem: rotation lemma for modules}
Using the same notation as in Lemma \ref{lem: inductive step A=B for Q finite type}, let $M_1,\cdots,M_m\in S_k^\perp\subset \Lambda\text-mod$. Then \emph{(1)} $\psi_k(M_1),\cdots,\psi_k(M_m)$ is a FHO sequence in $\Lambda'\text-mod$ which lies in $\,^{\perp}S_k'$ if and only if \emph{(2)} $S_k,M_1,\cdots,M_m$ is a FHO sequence in $\Lambda\text-mod$.
\end{lem}

\begin{proof} Let $\cB=\cF(S_k\oplus M)$ where $M=M_1\oplus\cdots\oplus M_m$. Thus 
\[
	\cB=M^\perp\cap S_k^\perp= \{Y\in S_k^\perp\,|\, \Hom_{\Lambda}(M,Y)=0\}.
\]
We will show that both statements in the lemma are equivalent to the third statement:

(3) $M_1,\cdots,M_m$ is a maximal weakly FHO sequence in $S_k^\perp\cap\,^\perp \cB$.

The equivalence $(1)\Leftrightarrow(3)$ is clear. Since $\psi_k:S_k^\perp\cong \,^{\perp}S_k'$ by Lemma \ref{lem: inductive step A=B for Q finite type}, we have
\[
	\cB':=\psi_k(\cB)=\psi_k(M)^\perp\cap \,^\perp S_k'=\{Y'\in\,^{\perp}S_k'\,|\, \Hom_{\Lambda'}(\psi_k(M),Y')=0\}.
\]
So, $\psi_k(S_k^\perp\cap\,^\perp \cB)=\,^{\perp}S_k'\cap\,^\perp\cB'=\cG(\psi_k(M))$ by Proposition \ref{prop: HN stratification with 3 strata} with $S_k',\psi_k(M),\cB'$ playing the roles of $X,Y,\cC$. Since $\psi_k$ is an isomorphism, $M_1,\cdots,M_m$ is maximal weakly FHO in $S_k^\perp\cap\,^\perp\cB$ if and only if $\psi_k(M_1),\cdots,\psi_k(M_m)$ is maximal weakly FHO in $\psi_k(S_k^\perp\cap\,^\perp \cB)=\cG(\psi_k(M))$. By Definition \ref{def of FHO} this is equivalent to (1).


For the equivalence $(2)\Leftrightarrow(3)$, note that (3) is equivalent to the statement that $S_k$, $M_1,\cdots,M_m$ is weakly FHO in $\Lambda\text-mod$ and that no objects of $\,^\perp\cB=\cG(S_k\oplus M)$ can be inserted between the $M_i$, after $M_m$ or before $M_1$ (and after $S_k$). Since these hold under condition (2) we have $(2)\then (3)$. To show $(3)\then (2)$ it remains to show one more condition: that no object of $\,^\perp\cB$ can be inserted before $S_k$ in the sequence.

Suppose not. Then there is a Schurian $\Lambda$-module $X\in\,^\perp\cB$ so that $\Hom_\Lambda(X,S_k\oplus M)=0$. Let $T\subsetneq X$ be the largest submodule having only $S_k$ in its composition series. Then $\Hom_\Lambda(S_k,X/T)=0$ and $\Hom_\Lambda(X/T,M)=0$. So, $S_k^\perp\cap\,^\perp\cB$ contains $X/T\neq0$. As in the proof of Proposition \ref{prop: HN stratification with 3 strata}, any object $Z$ in $S_k^\perp\cap\,^\perp\cB$ of minimal length is Schurian. Then $Z,M_1,\cdots,M_m$ is a weakly FHO sequence in $S_k^\perp\cap\,^\perp \cB$ contradicting the maximality of the sequence $M_1,\cdots,M_m$. Therefore, all three statements are equivalent.
\end{proof}

\begin{thm}\label{thm: inductive thm implying main thm}
For any $m\ge1$ and any nondegenerate quiver with potential $(Q,W)$ of finite type, there is a $1$-$1$ correspondence:
\[
\left\{\begin{array}{c}
\text{green sequences for $Q$}\\
\text{ of length $m$}
\end{array}
\right\}\cong 
\left\{
\begin{array}{c}
\text{isomorphism classes of}\\
\text{ FHO sequences $M_1,\cdots,M_m$}\\
\text{in $J(Q,W)\text-mod$}
\end{array}
\right\}
\]
where the green sequence corresponding to $(M_i)$ is the unique one with $c$-vectors $\beta_i=\undim M_i$.
\end{thm}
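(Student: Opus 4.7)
The plan is to induct on $m$, with the Rotation Lemma (Lemma~\ref{lem: rotation lemma for modules}) as the main engine. The base case $m=1$ is immediate: green sequences of length one are single mutations $\mu_k$ with $c$-vector $e_k$, and by Lemma~\ref{lem:forward hom-orthogonal of length one is simple} forward hom-orthogonal sequences of length one are exactly the simple modules $S_k$, which have dimension vector $e_k$.

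For the inductive step, I would first establish that $M_1$ is necessarily simple. Pick any simple quotient $S_j$ of $M_1$, so that $\Hom_\Lambda(M_1,S_j)\neq 0$. If $M_1\not\cong S_j$ then either $S_j\cong M_i$ for some $i>1$, contradicting forward hom-orthogonality $\Hom_\Lambda(M_1,M_i)=0$, or else $S_j$ is not isomorphic to any $M_i$, in which case Lemma~\ref{lem:when Sk is not one of the Mi} forces $\Hom_\Lambda(M_1,S_j)=0$, a contradiction. Hence $M_1=S_{k_1}$ for some vertex $k_1$. Applying the Rotation Lemma then gives a bijection between forward hom-orthogonal sequences $S_{k_1},M_2,\ldots,M_m$ in $\Lambda\text-mod$ and forward hom-orthogonal sequences $\psi_{k_1}(M_2),\ldots,\psi_{k_1}(M_m)$ of length $m-1$ in $\Lambda'\text-mod$, where $\Lambda'=J(\mu_{k_1}(Q,W))$ and the dimension vectors satisfy $\undim\psi_{k_1}(M_i)=\varphi_{k_1}(\undim M_i)$.

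On the green-sequence side, a green sequence of length $m$ for $Q$ beginning with $\mu_{k_1}$ (first $c$-vector $e_{k_1}=\undim S_{k_1}$) is the same data as the initial mutation together with a green sequence of length $m-1$ for the mutated quiver $Q'=\mu_{k_1}Q$; the remaining $c$-vectors $\beta_2,\ldots,\beta_m$ in the original framework pass to the $c$-vectors of the length $(m-1)$ sequence in the new framework via the same involution $\varphi_{k_1}$ of $\ZZ^n$ that governs the Rotation Lemma. Applying the inductive hypothesis to $Q'$ matches these to forward hom-orthogonal sequences in $\Lambda'\text-mod$, whose preimages under $\psi_{k_1}$, prepended with $S_{k_1}$, yield the desired sequence in $\Lambda\text-mod$ with $\undim M_i=\beta_i$, using that $\varphi_{k_1}$ is an involution.

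The main technical obstacle is the $c$-vector analogue of the Rotation Lemma: one must check directly from the Fomin--Zelevinsky mutation rule on the extended exchange matrix $\tilde B$ that deleting the first green mutation and restarting the framework at $Q'$ transforms the remaining $c$-vectors by exactly the involution $\varphi_{k_1}$ defined in \eqref{eq: y=Xkx}. This is the compatibility that makes the two inductive bijections glue together; it should follow from the standard $c$-matrix mutation formulas combined with sign-coherence of $c$-vectors, and is essentially the Rotation Lemma of \cite{BHIT} alluded to earlier in this section.
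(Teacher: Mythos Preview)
Your approach is the paper's approach: induct on $m$, with the base case handled by Lemma~\ref{lem:forward hom-orthogonal of length one is simple} and the inductive step driven by the Rotation Lemma together with the $c$-vector rotation from \cite{BHIT}. Your argument that $M_1$ is simple is fine (the paper just cites Lemmas~\ref{lem:subsequence of hom orth is hom orth} and~\ref{lem:forward hom-orthogonal of length one is simple}, but your direct use of Lemma~\ref{lem:when Sk is not one of the Mi} works equally well).

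There is, however, a genuine bookkeeping gap. Lemma~\ref{lem: rotation lemma for modules} does \emph{not} give a bijection between forward hom-orthogonal sequences $S_{k_1},M_2,\ldots,M_m$ in $\Lambda\text-mod$ and \emph{all} forward hom-orthogonal sequences of length $m-1$ in $\Lambda'\text-mod$: its image is only those sequences lying in $\,^{\perp'}S_{k_1}'$. Symmetrically, the $c$-vector rotation does not biject green sequences for $Q$ beginning with $\mu_{k_1}$ with \emph{all} green sequences for $Q'$: it hits exactly those whose $c$-vectors avoid $e_{k_1}$. (Concretely, a green sequence for $Q'$ with some $\beta_i'=e_{k_1}$ would pull back to $\varphi_{k_1}(e_{k_1})=-e_{k_1}$, which is not green in the $Q$-framework.) So when you write ``a green sequence of length $m$ for $Q$ beginning with $\mu_{k_1}$ is the same data as \ldots\ a green sequence of length $m-1$ for $Q'$,'' that is false as stated, and your inductive hypothesis applied to $Q'$ will produce forward hom-orthogonal sequences that need not lie in the image of $\psi_{k_1}$.

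The fix is exactly what the paper inserts as its equivalences $(2)\Leftrightarrow(3)$ and $(3)\Leftrightarrow(4)$: by Lemma~\ref{lem:when Sk is not one of the Mi}, a forward hom-orthogonal sequence in $\Lambda'\text-mod$ lies in $\,^{\perp'}S_{k_1}'$ if and only if none of its terms is isomorphic to $S_{k_1}'$; and by the inductive hypothesis (which identifies $c$-vectors with dimension vectors) this holds if and only if no $c$-vector of the corresponding green sequence for $Q'$ equals $e_{k_1}$. Once you insert this matching of constraints, your argument is complete and coincides with the paper's chain $(1)\Leftrightarrow(2)\Leftrightarrow(3)\Leftrightarrow(4)\Leftrightarrow(5)$.
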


We call the sequence of $c$-vectors of a green mutation sequence a \emph{green $c$-sequence}.

\begin{proof} Since the initial $c$-matrix is the identity matrix $I_n$, the first mutation is mutation at a unit vector which is arbitrary. By Lemma \ref{lem:FHO of length one is simple}, a module occurs as the first module $M_1$ of a FHO sequence if and only if $M_1$ is simple. This proves the theorem in the case $m=1$.

For $m\ge2$, $M_1=S_k$ is simple and we claim that the following are equivalent where $\Lambda'=\mu_k\Lambda=J(Q',W')$ as discussed above and $S_k'$ is the simple $\Lambda'$-module at vertex $k$.
\begin{enumerate}
\item $M_1,\cdots,M_m$ is a FHO sequence in $\Lambda\text-mod$.
\item $\psi_k(M_2),\cdots,\psi_k(M_m)$ is FHO in $\Lambda'\text-mod$ and lies in $\,^\perp S_k'$.
\item $\psi_k(M_2),\cdots,\psi_k(M_m)$ is FHO in $\Lambda'\text-mod$ and $\psi_k(M_i)\not\cong S_k'$ for all $i\ge2$.
\item $\beta_2',\cdots,\beta_m'$, where $\beta_i'=\undim \psi_k(M_i)$, is a green $c$-sequence for $Q'=\mu_kQ$ and $\beta_i'\neq e_k$, the $k$th unit vector in $\ZZ^n$, for all $i\ge2$.
\item $e_k,\beta_2,\beta_3,\cdots,\beta_m$ is a green $c$-sequence for $Q$ where $\beta_i=\varphi_k(\beta_i')$ for all $i\ge2$.
\end{enumerate}
$(1)\ifff(2)$ the same statement as Lemma \ref{lem: rotation lemma for modules} since $M_1=S_k$.

\noindent$(2)\ifff(3)$ by Lemma \ref{lem:when Sk is not one of the Mi}.

\noindent$(3)\ifff(4)$ is the theorem for $m-1$ applied to $\Lambda'\text-mod$ with the additional condition that $\psi_k(M_i)\not\cong S_k'$ which is equivalent to $\beta_i'\neq e_k$ since $\beta_i'=\undim\psi_k(M_i)$.

\noindent$(4)\ifff(5)$ by the well-known mutation formula for $c$-vectors. See, e.g., \cite[Thm 2.1.8]{BHIT}.

Finally, $\beta_i=\varphi_k(\undim \psi_k(M_i))=\undim M_i$ by Lemma \ref{lem: inductive step A=B for Q finite type} since $\varphi_k$ is an involution.
\end{proof}

\begin{cor}\label{cor: main thm of section 2}
For any nondegenerate quiver with potential $(Q,W)$ of finite type, there is a $1$-$1$ correspondence between maximal green sequences for $Q$ and isomorphism classes of complete FHO sequences $M_1,\cdots,M_m$ in $J(Q,W)\text-mod$ where the green $c$-sequence corresponding to $(M_i)$ is $(\beta_i=\undim M_i)$. \qed
\end{cor}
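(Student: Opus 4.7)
The plan is to deduce this corollary directly from Theorem~\ref{thm: inductive thm implying main thm}, which supplies a bijection at each fixed length $m$ between green sequences of length $m$ for $Q$ and isomorphism classes of length-$m$ forward hom-orthogonal sequences in $J(Q,W)\text-mod$, matching $c$-vectors with dimension vectors. It suffices to check that the two notions of maximality correspond under this bijection.

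I would first show that extending a forward hom-orthogonal sequence $(M_1,\ldots,M_m)$ by appending one module corresponds bijectively to extending its associated green sequence by one further green mutation. Indeed, any length-$(m+1)$ green extension $\beta_1,\ldots,\beta_{m+1}$ yields, by Theorem~\ref{thm: inductive thm implying main thm} applied at length $m+1$, a forward hom-orthogonal sequence $(N_1,\ldots,N_{m+1})$; by Lemma~\ref{lem:subsequence of hom orth is hom orth} its truncation $(N_1,\ldots,N_m)$ is forward hom-orthogonal with $c$-vectors $\beta_1,\ldots,\beta_m$, hence isomorphic to $(M_1,\ldots,M_m)$ by the uniqueness in Theorem~\ref{thm: inductive thm implying main thm} at length $m$. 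Conversely, any forward hom-orthogonal extension yields a green extension directly by Theorem~\ref{thm: inductive thm implying main thm}. Therefore, the green sequence is maximal iff $(M_1,\ldots,M_m)$ admits no forward hom-orthogonal extension of length $m+1$.

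It then remains to identify this non-extendability with the condition $\cF(M)=0$ of Definition~\ref{def of forward hom-orthogonal}. The easy direction is immediate: any extension $M_{m+1}$ satisfies $\Hom(M_i,M_{m+1})=0$ for all $i\le m$ and so lies in $\cF(M)$, whence $\cF(M)=0$ forbids any extension. For the converse, if $\cF(M)\neq 0$ then, since $\cF(M)$ is closed under submodules, it contains a simple module $S_j$, and I would show that $(M_1,\ldots,M_m,S_j)$ is forward hom-orthogonal. Condition (1) of Definition~\ref{def of forward hom-orthogonal} is immediate from $S_j\in\cF(M)$. The main obstacle is verifying condition (2): appending $S_j$ enlarges the torsion class from $\cG(M)$ to $\cG(M\oplus S_j)$, which could a priori admit new Schurian insertions between earlier entries $M_k$ and $M_{k+1}$. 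The plan is to rule these out by analyzing the torsion decomposition of any candidate insertion with respect to the old torsion pair $(\cG(M),\cF(M))$ and invoking Proposition~\ref{prop: HN stratification with 3 strata}(2) together with the fact that $(M_1,\ldots,M_m)$ was already maximal in $\cG(M)$; the finite-type hypothesis ensures the finiteness of indecomposables needed to select $S_j$ and terminate the argument.
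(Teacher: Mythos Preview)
Your overall structure is sound, and you correctly identify the one nontrivial point hidden behind the paper's bare \qed: that a forward hom-orthogonal sequence admits no length-$(m{+}1)$ extension if and only if $\cF(M)=0$. However, your plan for the ``if $\cF(M)\neq0$'' direction has a genuine gap. Appending an arbitrary simple $S_j\in\cF(M)$ need \emph{not} yield a forward hom-orthogonal sequence: condition~(2) of Definition~\ref{def of forward hom-orthogonal} can fail. For a concrete counterexample, take $\Lambda=KQ$ with $Q\colon 1\to 2\to 3$ and the forward hom-orthogonal sequence $(S_3,S_1)$. Here $\cF(S_3\oplus S_1)=\add(S_2,I_2)$ with $I_2$ of dimension vector $(1,1,0)$, and the only simple in $\cF$ is $S_2$. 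But $(S_3,S_1,S_2)$ is \emph{not} forward hom-orthogonal: since $\cG(S_1\oplus S_2\oplus S_3)=\Lambda\text-mod$ and $\Hom(S_3,I_2)=\Hom(S_1,I_2)=\Hom(I_2,S_2)=0$, the module $I_2$ can be inserted between $S_1$ and $S_2$, violating~(2). The correct one-step extension here is $(S_3,S_1,I_2)$, with $I_2$ not simple. So your torsion-decomposition sketch cannot succeed as stated, because the target statement is false.

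The clean way to finish is already implicit in the proof of Theorem~\ref{thm: inductive thm implying main thm} (and is made explicit immediately afterwards in Proposition~\ref{prop: iterated mutation}). Iterating Lemma~\ref{lem: rotation lemma for modules} gives an equivalence $\psi\colon M^\perp\xrightarrow{\ \sim\ }{}^{\perp'}N$ into the module category of the $m$-fold mutated algebra $\Lambda'$, under which a Schurian $X\in M^\perp$ extends $(M_1,\ldots,M_m)$ to a forward hom-orthogonal sequence of length $m{+}1$ precisely when $\psi(X)$ is a length-one forward hom-orthogonal sequence in $\Lambda'\text-mod$, i.e.\ (by Lemma~\ref{lem:forward hom-orthogonal of length one is simple}) when $\psi(X)$ is a \emph{simple} $\Lambda'$-module. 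Since ${}^{\perp'}N$ is closed under quotients, it contains a simple $\Lambda'$-module iff it is nonzero, iff $M^\perp=\cF(M)\neq0$. This gives the desired equivalence directly, and shows that the right module to append is $\psi^{-1}$ of a simple of $\Lambda'$, not a simple of $\Lambda$.
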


\begin{cor}[Rotation Lemma]\label{cor: rotation lemma}
There is a $1$-$1$ correspondence between complete FHO sequences $M_1,\cdots,M_m$ in $J(Q,W)$ starting with $M_1=S_k$ and complete FHO sequences $X_1,\cdots,X_m$ of the same length for $J(\mu_k(Q,W))$ ending with $X_m=S_k'$. The correspondence is given by $X_i=\psi_k(M_{i+1})$ for $i<m$.
\end{cor}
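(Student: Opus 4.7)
The plan is to reduce the statement to the classical Rotation Lemma for maximal green sequences from \cite{BHIT} --- namely, that $(\mu_k,\mu_{k_2},\ldots,\mu_{k_m})$ is an MGS for $Q$ if and only if $(\mu_{k_2},\ldots,\mu_{k_m},\mu_k)$ is an MGS for $Q'=\mu_k(Q,W)$ --- and then to translate this through the bijection of Corollary \ref{cor: main thm of section 2}.

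Starting with $M_1=S_k,M_2,\ldots,M_m$, a maximal forward hom-orthogonal sequence for $\Lambda$, I first apply Corollary \ref{cor: main thm of section 2} to obtain the corresponding MGS for $Q$, whose $c$-vector sequence is $(e_k,\beta_2,\ldots,\beta_m)$ with $\beta_i=\undim M_i$; in particular its first mutation is $\mu_k$. The classical Rotation Lemma then produces an MGS for $Q'$ whose mutation sequence ends with $\mu_k$, and by the mutation formula for $c$-vectors invoked in $(4)\ifff(5)$ of the proof of Theorem \ref{thm: inductive thm implying main thm}, its $c$-vector sequence is $(\varphi_k(\beta_2),\ldots,\varphi_k(\beta_m),e_k)$.

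Applying Corollary \ref{cor: main thm of section 2} now to $\Lambda'$ in the reverse direction, this rotated MGS corresponds to a unique (up to isomorphism) maximal forward hom-orthogonal sequence $X_1,\ldots,X_m$ in $\Lambda'\text-mod$ with $\undim X_i=\varphi_k(\beta_{i+1})$ for $i<m$ and $\undim X_m=e_k$. By Lemma \ref{lem: inductive step A=B for Q finite type} we have $\varphi_k(\beta_{i+1})=\undim\psi_k(M_{i+1})$, so the uniqueness clause of the bijection forces $X_i\cong\psi_k(M_{i+1})$ for $i<m$ and $X_m\cong S_k'$. The reverse correspondence is constructed in the same way, using that $\mu_k$ is an involution on quivers with nondegenerate potential.

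The main obstacle is the classical Rotation Lemma for MGSs itself; everything else is bookkeeping between $c$-vectors and dimension vectors via Corollary \ref{cor: main thm of section 2} and Lemma \ref{lem: inductive step A=B for Q finite type}. A purely representation-theoretic alternative would be to combine Lemma \ref{lem: rotation lemma for modules}, which already yields the forward hom-orthogonality of $\psi_k(M_2),\ldots,\psi_k(M_m)$ in $\Lambda'\text-mod$, with a direct verification that appending $S_k'$ upgrades maximality in $\cG(\psi_k(M_2\oplus\cdots\oplus M_m))$ to maximality in all of $\Lambda'\text-mod$; the subtle point there would be showing $\cF(\psi_k(M_2\oplus\cdots\oplus M_m)\oplus S_k')=0$ using the torsion-pair analysis already deployed in the proof of Lemma \ref{lem: rotation lemma for modules}.
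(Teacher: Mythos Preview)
Your proposal is correct and matches the paper's intent: the corollary is marked with \qed\ and is meant to follow immediately from Corollary~\ref{cor: main thm of section 2} together with the classical Rotation Lemma of \cite{BHIT}, which is exactly the route you spell out. Your identification $X_i\cong\psi_k(M_{i+1})$ via the uniqueness in Corollary~\ref{cor: main thm of section 2} (the bijection being determined by dimension vectors) is the right way to pin down the explicit form of the correspondence, and your remark about the alternative self-contained argument through Lemma~\ref{lem: rotation lemma for modules} is also on target.
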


\begin{proof} The correspondence is already given by $X_i=\psi_k(M_{i+1})$ for $i<m$ and $M_j=\psi_k^{-1}(X_{j-1})$ for $j>1$. It remains to show that this formula sends complete FHO sequences in $\Lambda\text-mod$ to complete FHO sequences in $\Lambda'\text-mod$ and vice versa. But this follows from Proposition \ref{prop: complete iff M1 is simple and M2, etc is maximal} and the fact that $\psi_k:S_k^\perp\cong \,^\perp S_k'$ is an equivalence of categories: $M_1,\cdots,M_m$, with $M_1=S_k$ and $M_2,\cdots,M_m\in M_1^\perp$ is a complete FHO sequence for $\Lambda\text-mod$ if and only if $M_2,\cdots,M_m$ is a maximal weakly FHO sequence in $S_k^\perp$. Since $\psi_k$ is an equivalence, this is equivalent to $X_1,\cdots,X_{m-1}$ being a maximal weakly FHO sequence in $\,^\perp S_k'$ in $\Lambda'\text-mod$. By \ref{prop: complete iff M1 is simple and M2, etc is maximal} this is equivalent to $X_1,\cdots,X_{m-1},S_k'$ being a complete FHO sequence in $\Lambda'\text-mod$.
\end{proof}

\subsection{Interated mutation of forward hom-orthogonal sequences}

For the next section we need the following iterated version of Lemmas \ref{lem: inductive step A=B for Q finite type} and \ref{lem: rotation lemma for modules}.

\begin{prop}\label{prop: iterated mutation}
Let $M_1,\cdots,M_m$ be a FHO sequence in $\Lambda\text-mod$ where $\Lambda=J(Q,W)$ is of finite representation type. Let $(k_1,\cdots,k_m)$ be the corresponding mutation sequence of $(Q,W)$. Let
\[
	\Lambda'=\mu_{k_m}\cdots\mu_{k_1}\Lambda=J(\mu_{k_m}\cdots\mu_{k_1}(Q,W)).
\]
Then $\exists N\in\Lambda'\text-mod$ and an equivalence of full subcategories $\psi:M^\perp\cong \,^{\perp}N$ where $M=M_1\oplus\cdots\oplus M_m$ and a linear automorphism $\varphi$ of $\ZZ^n$ so that
\[
	\undim \psi(X)=\varphi(\undim X).
\]
Furthermore, given Schurian $\Lambda$-modules $X_1,\cdots,X_s\in M^\perp$, the sequence $\psi(X_1),\cdots,\psi(X_s)$ is FHO in $\Lambda'$-$mod$ if and only if $M_1,\cdots,M_m,X_1,\cdots,X_s$ is FHO in $\Lambda\text-mod$.
\end{prop}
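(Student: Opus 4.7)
The natural approach is induction on $m$, iterating Lemmas~\ref{lem: inductive step A=B for Q finite type} and~\ref{lem: rotation lemma for modules}. The base case $m=1$ is immediate: by Lemma~\ref{lem:forward hom-orthogonal of length one is simple}, $M_1=S_{k_1}$ is simple, so Lemma~\ref{lem: inductive step A=B for Q finite type} gives the equivalence $\psi_{k_1}:S_{k_1}^\perp\cong{}^{\perp'}S'_{k_1}$ with $\varphi=\varphi_{k_1}$, $N=S'_{k_1}$, and Lemma~\ref{lem: rotation lemma for modules} supplies the forward hom-orthogonality statement for an arbitrary extension $X_1,\dots,X_s\in M_1^\perp$.

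For the inductive step, first note that $M_1,\dots,M_{m-1}$ is again forward hom-orthogonal by Lemma~\ref{lem:subsequence of hom orth is hom orth} and its associated mutation sequence (by Theorem~\ref{thm: inductive thm implying main thm}) is the truncation $(k_1,\dots,k_{m-1})$. Applying the inductive hypothesis yields an algebra $\Lambda''=\mu_{k_{m-1}}\cdots\mu_{k_1}\Lambda$, a linear automorphism $\varphi''$ of $\ZZ^n$, a module $N''\in\Lambda''\text-mod$, and an equivalence
\[
\psi'':(M_1\oplus\cdots\oplus M_{m-1})^\perp\xrightarrow{\;\sim\;}{}^{\perp''}N''
\]
preserving dimension vectors via $\varphi''$. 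Since $M_m\in(M_1\oplus\cdots\oplus M_{m-1})^\perp$, the image $\psi''(M_m)$ is defined and, by the ``furthermore'' clause of the inductive hypothesis applied to the length-one sequence $(M_m)$, it is a forward hom-orthogonal sequence of length one in $\Lambda''\text-mod$. Lemma~\ref{lem:forward hom-orthogonal of length one is simple} then forces $\psi''(M_m)=S''_{k_m}$ for some vertex, which by Theorem~\ref{thm: inductive thm implying main thm} must be precisely $k_m$, so $\Lambda'=\mu_{k_m}\Lambda''$ fits into place.

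Now apply Lemma~\ref{lem: inductive step A=B for Q finite type} once more to produce $\psi_{k_m}:(S''_{k_m})^\perp\cong{}^{\perp'}S'_{k_m}$. For any $X\in M^\perp=(M_1\oplus\cdots\oplus M_{m-1})^\perp\cap M_m^\perp$, one checks that $\psi''(X)$ lies in ${}^{\perp''}N''\cap(S''_{k_m})^\perp$, using
\[
\Hom_{\Lambda''}(S''_{k_m},\psi''(X))=\Hom_{\Lambda''}(\psi''(M_m),\psi''(X))=\Hom_\Lambda(M_m,X)=0.
\]
Hence $\psi:=\psi_{k_m}\circ\psi''$ is defined on $M^\perp$. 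I would set $\varphi:=\varphi_{k_m}\circ\varphi''$ (again a linear automorphism of $\ZZ^n$) and take $N:=S'_{k_m}\oplus\psi_{k_m}(N'')$, so that ${}^{\perp'}N=\psi_{k_m}({}^{\perp''}N''\cap(S''_{k_m})^\perp)$, giving the desired equivalence $\psi:M^\perp\cong{}^{\perp'}N$. The forward hom-orthogonality claim for $X_1,\dots,X_s\in M^\perp$ then follows by applying Lemma~\ref{lem: rotation lemma for modules} one step at a time: the sequence $M_1,\dots,M_m,X_1,\dots,X_s$ in $\Lambda\text-mod$ is forward hom-orthogonal iff (peeling off $M_1=S_{k_1}$) the sequence $\psi_{k_1}(M_2),\dots,\psi_{k_1}(M_m),\psi_{k_1}(X_1),\dots,\psi_{k_1}(X_s)$ is forward hom-orthogonal in $\mu_{k_1}\Lambda\text-mod$, and iterating through $k_2,\dots,k_m$ reduces to forward hom-orthogonality of $\psi(X_1),\dots,\psi(X_s)$ in $\Lambda'\text-mod$.

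The main obstacle I anticipate is keeping track of the intermediate module $N''$ and verifying that enlarging to $N=S'_{k_m}\oplus\psi_{k_m}(N'')$ really implements the intersection $M'^\perp\cap M_m^\perp$ on the nose; in particular one must confirm that $N''\in(S''_{k_m})^\perp$ (so that $\psi_{k_m}(N'')$ makes sense), which should follow from the characterization of $N''$ as a direct sum of simples of $\Lambda''$ arising from the earlier mutations, none of which can equal $S''_{k_m}$ by the no-repetition in the mutation sequence of a green sequence. Everything else is bookkeeping: the dimension vector formula composes, and the final equivalence-of-categories statement follows from the composition of equivalences supplied by the two applications of Lemma~\ref{lem: inductive step A=B for Q finite type}.
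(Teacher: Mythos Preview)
Your proposal is correct and follows essentially the same inductive scheme as the paper (compose the equivalence from Lemma~\ref{lem: inductive step A=B for Q finite type} with the inductively constructed one, and use Lemma~\ref{lem: rotation lemma for modules} for the forward hom-orthogonality clause). Only your anticipated obstacle deserves comment: your proposed resolution---that $N''$ is a direct sum of simples of $\Lambda''$---is not correct, since after the first step $N$ is built from $\psi_{k}$-images of earlier simples, which need not remain simple. The paper handles this far more cheaply: since $\psi''(M_m)=S''_{k_m}$ lies in the image of $\psi''$, it lies in ${}^{\perp''}N''$, i.e.\ $\Hom_{\Lambda''}(S''_{k_m},N'')=0$, which is exactly $N''\in(S''_{k_m})^\perp$. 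With that one-line fix, your argument and the paper's coincide.
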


\begin{proof}
Lemmas \ref{lem: inductive step A=B for Q finite type} and \ref{lem: rotation lemma for modules} give the case $m=1$. Suppose by induction that the proposition holds for $m$ with $m\ge1$. Let $M_{m+1}$ be one choice for the next term of the FHO sequence $M_1,\cdots,M_m$. By induction $\psi(M_{m+1})$ is a singleton FHO sequence and thus $\psi(M_{m+1})=S_k'$ is a simple $\Lambda'$-module by Lemma \ref{lem:FHO of length one is simple}. So, the equivalence $\psi:M^\perp \cong \,^{\perp}N$ given by induction on $m$ sends $M_{m+1}$ to $S_k'\in \,^\perp N$ and we have
\[
	\psi:M^\perp \cap M_{m+1}^\perp\cong \,^{\perp}N\cap S_k'^{\perp}.
\]

Let $\Lambda''=\mu_k\Lambda'$. Then, by Lemma \ref{lem: inductive step A=B for Q finite type}, we have an equivalence $\psi_k:S_k'^{\perp}\cong \,^{\perp}S_k''\subset \Lambda''$-$mod$ and $\undim \psi_k(X)=\varphi_k(\undim X)$ for all $X\in S_k'^{\perp}$. But, $S_k'\in\,^{\perp}N$ implies $N\in S_k'^{\perp}$. And $\psi_k$ restricts to an equivalence
\[
	\psi_k: \,^{\perp}N\cap S_k'^{\perp}\cong \,^{\perp}\psi_k(N)\cap \,^{\perp}S_k''.
\]
Combining these gives the required equivalence
\[
	\psi_k\psi:(M_1\oplus\cdots \oplus M_{m+1})^\perp\cong \,^{\perp}(\psi_k(N)\oplus S_k'').
\]
On dimension vectors this is
\[
	\undim \psi_k\psi (X)=\varphi_k(\undim \psi (X))=\varphi_k\varphi(\undim X)
\]
where $\varphi_k\varphi$ is a composition of two automorphisms of $\ZZ^n$.

Finally, suppose that $X_1,\cdots,X_s$ are Schurian modules in $(M\oplus M_{m+1})^\perp$. Then we are required to show that the following are equivalent.
\begin{enumerate}
\item $\psi_k\psi(X_1),\cdots,\psi_k\psi(X_s)$ is a FHO sequence in $\Lambda''$-$mod$.
\item $M_1,\cdots,M_{m+1},X_1,\cdots,X_s$ is FHO in $\Lambda$-$mod$.
\end{enumerate}
But, by Lemma \ref{lem: rotation lemma for modules}, (1) is equivalent to $\psi(M_{m+1}),\psi(X_1),\cdots,\psi(X_s)$ being a FHO sequence in $\Lambda'$-$mod$. By induction on $m$, this is equivalent to (2). So, all statements hold for $m+1$ and we are done.
\end{proof}


\section{Semistability sets for algebras of finite representation type}

In this section we prove that complete FHO sequences for $\Lambda=J(Q,W)$ of finite representation type are given by wall crossing sequences for (generic) ``green paths''. One direction is known, namely that a green path $\gamma$ gives a complete FHO sequence assuming that each wall crossed by $\gamma$ supports a unique Schurian module. (See \cite{PartI}, Theorems 3.6, 3.8.) Conversely, for any maximal green sequence for $Q$ we construct a green path. Br\"ustle, Smith and Treffinger \cite{BST} have recently shown the analogous statement for any finite dimensional algebra using $\tau$-tilting, namely that maximal green sequences defined using $\tau$-tilting are equivalent to wall crossing sequences which are also equivalent to finite Harder-Narasimhan stratifications of the module category. Demonet, Iyama and Jasso \cite{DIJ} have obtained similar results for $\tau$-tilting finite algebras.

\subsection{Basic definitions} We present here the basic definitions and proofs of well-known statements in detail for the benefit of our students.

Suppose that $\Lambda$ is a finite dimensional algebra over a field $K$. For every nonzero module $M$, the \emph{semistability set} $D(M)\subset \RR^n$ of $M$ is given by:
\[
	D(M):=\{x\in\RR^n\,|\, x\cdot \undim M=0, x\cdot \undim M'\le 0\ \forall M'\subset M\}
\]
This is clearly a closed convex subset of the hyperplane $H(M)=\undim M^\perp$ of all $x\in\RR^n$ perpendicular to $\undim M$. If $x\in D(M)$ and $x\cdot \undim M'=0$ for some $M'\subsetneq M$ then clearly $x\in D(M')$. Let $\partial D(M)$ be the set of all $x\in D(M)$ so that $x\in D(M')$ for some $M'\subsetneq M$. Let $int\,D(M)=D(M)-\partial D(M)$. This is a (possibly empty) open subset of the hyperplane $H(M)$. We call $int\,D(M)$ the \emph{stable set} of $M$. Clearly, we have:
\[
	int\,D(M)=\{x\in\RR^n\,|\, x\cdot \undim M=0, x\cdot \undim M'< 0\ \forall M'\subsetneq M\}.
\]

\begin{prop}\label{prop: int D(M) is empty for nonSchurian M}
If $M$ is not Schurian then $D(M)$ is contained in $D(M')$ for some proper submodule $M'\subsetneq M$ and $int\,D(M)$ is empty.
\end{prop}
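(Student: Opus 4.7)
The plan is to exploit the failure of $M$ being Schurian to produce a concrete proper submodule $M'$ that witnesses $D(M)\subseteq D(M')$, using the additivity of dimension vectors on short exact sequences to show that the two nonpositivity constraints forced by $x\in D(M)$ must actually be equalities.

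First I would pick, by assumption that $\End_\Lambda(M)$ is not a division ring, a nonzero endomorphism $f:M\to M$ that is not an isomorphism. Since $M$ is finite dimensional, such an $f$ is neither injective nor surjective, so $K:=\ker f$ and $M':=\im f$ are both proper nonzero submodules of $M$, and $\undim M=\undim K+\undim M'$.

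Next, for any $x\in D(M)$, the defining inequalities give $x\cdot\undim K\le 0$ and $x\cdot\undim M'\le 0$, while $x\cdot\undim M=0$. Adding via the dimension identity, these two nonpositive numbers must each vanish; in particular $x\cdot\undim M'=0$. Since every submodule of $M'$ is also a submodule of $M$, the inequalities $x\cdot\undim N\le 0$ for $N\subseteq M'$ are automatic. Thus $x\in D(M')$, which proves $D(M)\subseteq D(M')$ with $M'\subsetneq M$. The same equality $x\cdot\undim M'=0$ says that the strict-inequality condition defining $\mathrm{int}\,D(M)$ fails at every $x\in D(M)$, so $\mathrm{int}\,D(M)=\varnothing$.

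I do not expect a serious obstacle here; the only thing to be slightly careful about is justifying the existence of a non-iso, nonzero endomorphism when $M$ is decomposable (take a projection onto a summand composed with the inclusion) versus indecomposable (use that the endomorphism ring has a nontrivial ideal since it is not a division ring and hence contains a nonzero non-unit). After that, the argument is a one-line consequence of additivity of $\undim$ on the short exact sequence $0\to K\to M\to M'\to 0$ together with the sign constraints built into the definition of $D(M)$.
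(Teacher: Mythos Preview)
Your proof is correct and follows essentially the same approach as the paper: pick a nonzero non-invertible endomorphism $f$, use that $\undim(\ker f)+\undim(\im f)=\undim M$ together with the nonpositivity constraints to force both pairings to vanish, and conclude $D(M)\subseteq D(\im f)$. The paper additionally notes $D(M)\subseteq D(\ker f)$, but this is not needed for the stated proposition, and your extra remarks (why $f$ is neither injective nor surjective, why submodules of $M'$ inherit the inequality) only add detail to the same argument.
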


\begin{proof}
If $M$ is not Schurian there is a nonzero endomorphism $f$ of $M$ which is not an isomorphism. Let $K,L$ be the kernel and image of $f$. Then, $\undim K+\undim L=\undim M$. For every $x\in D(M)$ the conditions $x\cdot \undim K,x\cdot \undim L\le0$ and $x\cdot\undim M=0$ imply that $x\cdot \undim K=x\cdot \undim L=0$. Therefore, $D(M)\subset D(K)\cap D(L)$.
\end{proof}

\begin{rem}\label{added remark}
The converse of Proposition \ref{prop: int D(M) is empty for nonSchurian M} is not true in general. For example, take the quiver with
\[
\xymatrixrowsep{10pt}\xymatrixcolsep{10pt}
\xymatrix{
Q: &1\ar[rr]\ar@/^.5pc/[rr] &&2\ar[dl]\\
&& 3\ar[lu]
	}
\]
with relations $rad^5=0$. The two paths $1\to 2\to 3$ give two hom-orthogonal modules $A,B$ with the same dimension vector $(1,1,1)$ and the arrow $3\to 1$ gives an extension $A\into M\onto B$ which is Schurian with $\undim M=(2,2,2)$ and $int\,D(M)=\emptyset$ since $D(M)\subset D(A)$.
\end{rem}

\begin{eg}
Let $\Lambda=KQ/I$ be given by the cyclic quiver 
\[
\xymatrixrowsep{10pt}\xymatrixcolsep{10pt}
\xymatrix{
1\ar[r] &2\ar[d]\\
4\ar[u] & 3\ar[l]
	}
\]
with relations $rad^5=0$. Let $M=P_1=I_1$ with dimension vector $\undim M=(2,1,1,1)$. The simple module $S_1$ is both a submodule and quotient module of $M$ with complementary sub/quotient module $X=M/S_1$ with dimension vector $\undim X=(1,1,1,1)$. Therefore, $D(M)$ is contained in the codimension 2 subspace $H(S_1)\cap H(X)$ of $\RR^4$.
\end{eg}

We say that $D(M)$ has \emph{full rank} if it contains $n-1$ elements which are linearly independent over $\RR$, i.e., it does not lie in the intersection of two distinct hyperplanes as in the example above.

We consider the union $L(\Lambda)=\bigcup D(M)$ of all $D(M)$. For any $x\in L(\Lambda)$, let $M$ be a module of minimal length so that $x\in D(M)$. Then $x\in int\,D(M)$. It follows that $L(\Lambda)$ is a union of the stable sets $int\,D(M)$ for $M$ Schurian. Since there are only countably many hyperplanes of the form $H(M)$, the set $L(\Lambda)$ has measure zero and its complement is dense in $\RR^n$. One example is: any point $x\in \RR^n$ with coordinates linearly independent over $\QQ$ cannot lie on any hyperplane $H(M)$ since each such hyperplane is defined by a linear equation with integer coefficients. We call such points \emph{generic}. It is easy to see that, given any generic point $x$, the path 
\[
\gamma_x(t)=x+(t,t,\cdots,t)
\]
does not pass through the intersection of two distinct hyperplanes $H(M)\cap H(N)$. 

Similarly, a \emph{generic point of $D(M)$} will mean a point having $n-1$ coordinates linearly independent over $\QQ$. The set $D(M)$ contains generic elements if and only if it has full rank. If $x\in D(M)$ is generic then the path $\gamma_x$ will contain a generic point in $\RR^n$ (take $\gamma_x(t)$ where $t$ is $\QQ$-linearly independent from the coordinates of $x$). Thus, $\gamma_x$ cannot meet the intersection of two distinct hyperplanes $H(M)\cap H(N)$ and the intersection of $\gamma_x$ with any other $D(N)$ will also be generic. When $D(M)$ has full rank, its generic elements form a dense subset. (The nongeneric points in $D(M)$ lie in a countable union of codimension one subsets.)

For any $x_0\in \RR^n$ let $\cW(x_0)$ be the full subcategory of $\Lambda$-$mod$ of all modules $X$ so that $x_0\in D(X)$. It is well-known that $\cW(x_0)$ is an abelian category. (See \cite[Lemma 5.2]{Bridgeland} where the role of $\cW(x_0)$ is played by $\cP(\phi)$.)

\begin{prop}\label{prop: generic x means W(x) has only one vector}
If $x_0$ is a generic point of $D(M)$ then every object in $\cW(x_0)$ has dimension vector a rational multiple of $\undim M$.
\end{prop}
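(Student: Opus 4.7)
The statement reduces to pure linear algebra: if $X\in\cW(x_0)$ then $x_0\cdot\undim X=0$, so I only need to show that the set of integer vectors perpendicular to $x_0$ is a rank-one $\QQ$-subspace spanned by $\undim M$. The key input is the hypothesis that $x_0$ has $n-1$ coordinates linearly independent over $\QQ$.

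My plan is as follows. Fix coordinates $i_1<\cdots<i_{n-1}$ of $x_0$ that are $\QQ$-linearly independent, and let $j$ denote the remaining index. First I would observe that the entry $(\undim M)_j$ must be nonzero: otherwise the relation $x_0\cdot\undim M=0$ becomes a nontrivial $\QQ$-linear relation among $(x_0)_{i_1},\ldots,(x_0)_{i_{n-1}}$, contradicting the genericity hypothesis (since $M\ne 0$ forces some $(\undim M)_{i_k}\ne 0$). Consequently the equation $x_0\cdot\undim M=0$ can be solved for $(x_0)_j$ as a rational linear combination of the independent coordinates.

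Next, given any $\undim X$ with $x_0\cdot\undim X=0$, I would substitute this expression for $(x_0)_j$ into $x_0\cdot\undim X=0$ to get
\[
	\sum_{k=1}^{n-1}\left[(\undim X)_{i_k}-\frac{(\undim X)_j(\undim M)_{i_k}}{(\undim M)_j}\right](x_0)_{i_k}=0.
\]
Each bracketed coefficient is rational, so by $\QQ$-linear independence of the $(x_0)_{i_k}$ every coefficient vanishes. This forces $(\undim X)_{i_k}=\lambda(\undim M)_{i_k}$ for all $k$ with $\lambda=(\undim X)_j/(\undim M)_j\in\QQ$, and the identity is trivial in coordinate $j$. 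Hence $\undim X=\lambda\undim M$.

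There is no real obstacle here beyond being careful that the genericity hypothesis is applied correctly, and handling the case where $(\undim M)_j=0$ by ruling it out at the start. The condition $x_0\in D(X)$ contributes only the equality $x_0\cdot\undim X=0$; the inequality conditions on submodules of $X$ are not needed, so the proof works uniformly for every object of $\cW(x_0)$.
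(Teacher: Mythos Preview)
Your proof is correct and follows the same approach as the paper's one-line argument (``All rational vectors perpendicular to $x_0$ are proportional to each other''), simply unpacking the linear algebra that the paper leaves implicit. In particular, your preliminary step ruling out $(\undim M)_j=0$ and the substitution argument are exactly what is needed to justify that sentence.
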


\begin{proof}
Since $n-1$ of the coordinates of $x_0$ are linearly independent over $\QQ$, any two rational vectors perpendicular to $x_0$ are proportional to each other.
\end{proof}

\subsection{Green paths} We recall the definition of a green path using generic points.

\begin{defn}\label{def: green path}
By a \emph{generic path} for $\Lambda$ we mean a smooth path $\gamma:\RR\to\RR^n$ which meets each set $D(M)$ at a finite number of points all of which are generic. The path will be called \emph{green} if all coordinates of $\gamma(t)$ are positive, resp. negative, for $t>>0$, resp. $t<<0$ and, whenever $\gamma(t_0)\in D(M)$ the velocity vector of $\gamma$ points in the positive direction, i.e.,
\[
	\frac {d\gamma}{dt}(t_0)\cdot \undim M>0.
\]
For example, the linear path $\gamma_x$ is a generic green path for any generic $x\in\RR^n$.
\end{defn}

\begin{defn}
A module $M$ is \emph{strongly Schurian} if $M$ is Schurian and if, for any generic point $x_0\in D(M)$, any module $X$ so that $x_0\in D(X)$ is an iterated self-extension of $M$. In particular, $M$ is the only Schurian module in $\cW(x_0)$.
\end{defn}

For example, any simple module is strongly Schurian. The following theorem, proved in Theorem 5.13 in \cite{PartI} is due to Bridgeland \cite{Bridgeland} in a different language.

\begin{thm}\label{thm: HN filtration} Let $\Lambda$ be any finite dimensional algebra over $K$ and let $\gamma$ be a generic green path for $\Lambda$. Then, for any $\Lambda$-module $X$ there is a unique filtration $0=X_0\subset X_1\subset \cdots\subset X_m=X$ so that each $X_i/X_{i-1}\in\cW(\gamma(t_i))$ for some $t_1<\cdots<t_m$ so that $\gamma(t_i)t_i\in D(M_i)$ for Schurian modules $M_i$. In particular, $\Hom_\Lambda(M_i,M_j)=0$ for $i<j$.\qed
\end{thm}

$0=X_0\subset X_1\subset \cdots\subset X_m=X$ is called the \emph{Harder-Narasimhan (HN) filtration} of $X$. The last sentence in Theorem \ref{thm: HN filtration}, usually stated without proof, follows from the uniqueness of the HN-filtration for $X=M_i\oplus M_j$. Any morphism $f:M_i\to M_j$ gives a different filtration using the graph of $f$ as the submodule $X_i$. So, $f$ must be unique, i.e., $f=0$.

\begin{cor}\label{cor: first wall is simple}
Let $D(M_1)$, with $M_1$ Schurian, be the first wall crossed by $\gamma$, a generic green path for $\Lambda$, i.e., $\gamma(t_1)\in D(M_1)$ and $\gamma(t)\notin L(\Lambda)$ for any $t<t_1$. Then $M_1$ is simple.
\end{cor}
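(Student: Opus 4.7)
The plan is to argue by contradiction. Suppose $M_1$ is not simple. Then $M_1$ has a proper simple quotient, giving a short exact sequence $0\to N\to M_1\to S_k\to 0$ with $N\subsetneq M_1$ nonzero. I will show that this forces $\gamma$ to meet the wall $D(S_k)\subseteq L(\Lambda)$ strictly before time $t_1$, contradicting the minimality of $t_1$.

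First I would extract sign information from the wall condition at $t_1$. Since $\gamma(t_1)\in D(M_1)$, we have $\gamma(t_1)\cdot \undim M_1=0$ and $\gamma(t_1)\cdot \undim N\le 0$. Combining these with $\undim M_1=\undim N+e_k$ gives
\[
(\gamma(t_1))_k=\gamma(t_1)\cdot e_k=-\gamma(t_1)\cdot \undim N\ge 0.
\]
To upgrade this to strict positivity I would use the genericity of $\gamma(t_1)$ in $D(M_1)$. If $(\gamma(t_1))_k=0$ then $\gamma(t_1)\in H(S_k)=D(S_k)$, so $S_k\in\cW(\gamma(t_1))$; Proposition \ref{prop: generic x means W(x) has only one vector} then forces $\undim S_k=e_k$ to be a positive rational multiple of $\undim M_1$, i.e., $\undim M_1=d\,e_k$ for some integer $d\ge 1$. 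Since a finite type Jacobian quiver has no loops, any module supported only at vertex $k$ decomposes as $S_k^{\oplus d}$, and the hypothesis that $M_1$ is Schurian then forces $d=1$, so $M_1=S_k$, contradicting the assumption that $M_1$ is not simple. Hence $(\gamma(t_1))_k>0$.

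With strict positivity in hand, consider the continuous scalar function $h(t):=(\gamma(t))_k$. The green hypothesis gives $h(t)<0$ for $t\ll 0$ (all coordinates of $\gamma(t)$ are negative), while $h(t_1)>0$, so by the intermediate value theorem there exists $t'\in(-\infty,t_1)$ with $h(t')=0$. Since $S_k$ is simple, $D(S_k)=H(S_k)=\{x:x_k=0\}$, so $\gamma(t')\in D(S_k)\subseteq L(\Lambda)$, contradicting $\gamma(t)\notin L(\Lambda)$ for $t<t_1$.

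The main obstacle will be the genericity step ruling out $(\gamma(t_1))_k=0$, which packages both the ``one rational linear relation'' property of a generic point of $D(M_1)$ and the no-loops property of finite type Jacobian quivers. Everything else is routine manipulation of dimension vectors combined with the intermediate value theorem.
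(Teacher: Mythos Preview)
Your proof is correct but takes a different route from the paper. The paper invokes Proposition~\ref{prop: HN filtration}: it takes a simple quotient $X$ of $M_1$, uses the Harder--Narasimhan filtration to place $X$ in some $\cW(\gamma(t_i))$ with $t_i\ge t_1$, argues $t_i=t_1$ from $\Hom(M_1,X)\neq0$, and then finishes with genericity exactly as you do. You bypass the HN machinery entirely, reading off $(\gamma(t_1))_k\ge0$ directly from the wall inequalities for $D(M_1)$, upgrading to strict positivity via genericity, and applying the intermediate value theorem to the single coordinate function $t\mapsto(\gamma(t))_k$ to produce an earlier crossing of the hyperplane $D(S_k)$. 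Your approach is more elementary and self-contained; the paper's version has the advantage of showing how the result sits inside the HN framework used throughout Section~3. One small remark: your ``no loops'' justification that $\undim M_1=d\,e_k$ forces $M_1\cong S_k^{\oplus d}$ is specific to the Jacobian setting, whereas the corollary is stated for general $\Lambda$. The conclusion still holds in that generality, since any non-simple module whose composition factors are all $S_k$ has $S_k$ as both a submodule and a quotient, hence admits a nonzero non-invertible endomorphism and cannot be Schurian; with this tweak your argument matches the paper's implicit reasoning at that step.
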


\begin{proof}
Apply Theorem \ref{thm: HN filtration} to any simple quotient module $X$ of $M_1$. Then $X\in \cW(\gamma(t_i))$ for some $t_i\ge t_1$. Then $t_i=t_1$ since, otherwise, $\Hom_\Lambda(M_1,X)=0$, contradicting the hypothesis that $X$ is a quotient of $M_1$. By Proposition \ref{prop: generic x means W(x) has only one vector} each object in $\cW(\gamma(t_1))$ has dimension vector a multiple of the unit vector $\undim X$. Thus it must be an iterated self-extension of $X$. So, $X$ is the only Schurian object in $\cW(\gamma(t_1))$. So, $M_1=X$.
\end{proof}

The following is Theorem 3.8 in \cite{PartI}.

\begin{prop}\label{prop: wall sequence is hom orthog}
Suppose that $\gamma$ is a generic green path for $\Lambda$ which meets only a finite number of walls $D(M_1),\cdots,D(M_m)$ in that order. So, there exist $t_1<t_2<\cdots<t_m$ so that $\gamma(t_i)\in D(M_i)$ and $\gamma(t)\notin L(\Lambda)$ for any other $t\in\RR$. Suppose that $M_i$ is the unique Schurian module in $\cW(\gamma(t_i))$ for $i\le k$ (so all other objects of $\cW(\gamma(t_i))$ are iterated self-extensions of $M_i$). Then 
\begin{enumerate}
\item The torsion class $\cG(M_1\oplus\cdots\oplus M_k)$ consists of all modules $X$ whose HN-filtration has nonzero subquotients only in $\cW(\gamma(t_1)),\cdots,\cW(\gamma(t_k))$.
\item $M_1,\cdots, M_k$ is a FHO sequence for $\Lambda$. 
\item
If $k=m$ then $M_1,\cdots,M_m$ is a complete FHO sequence for $\Lambda$.
\end{enumerate}
\end{prop}

\begin{proof} (1) If $X$ has such a filtration, then each subquotient of $X$ will be an iterated self extensions of $M_i$ for some $i\le k$. So, $X\in\cG(M)$. Conversely, if the HN-filtration of $X$ goes beyond $\cW(\gamma(t_k))$, the last term will be a nonzero quotient of $X$ in some $\cW(\gamma(t_p))$ for $p>k$. But, $\cW(\gamma(t_p))\subset M^\perp=\cF(M)$. So, $X\notin \cG(M)$.

(2) We know that $M_1,\cdots,M_k$ is weakly FHO and lies in $\cG(M)$. To show that it is maximal in $\cG(M)$ suppose that $X$ is any Schurian object in $\cG(M)$ not isomorphic to any $M_i$. By assumption $X$ does not lie in $\cW(\gamma(t_i))$ for $i\le k$. So, the HN-filtration of $X$ gives a submodule $X_i\in\cW(\gamma(t_i))$ and quotient module $X/X_{j-1}\in \cW(\gamma(t_j))$ for some $i<j\le k$. So, $X$ cannot be inserted into the sequence $M_1,\cdots,M_k$, since it must go before $M_i$ and after $M_j$. So, $M_1,\cdots,M_k$ is maximal in $\cG(M)$ and thus a FHO sequence.


(3) This follows from (2) and the definition of ``complete FHO sequence''.
\end{proof}

We will show that this observation applies to every generic green path for $\Lambda=J(Q,W)$ of finite representation type.

\begin{thm}\label{thm: walls are strongly Schurian}
Let $\Lambda=J(Q,W)$ be a Jacobian algebra of finite representation type. Let $\gamma$ be any generic green path for $\Lambda$. Let $D(M_1),\cdots,D(M_m)$ be the walls crossed by $\gamma$ where each $M_i$ is Schurian. Then each $M_i$ is strongly Schurian. In particular, $M_1,\cdots,M_m$ is a complete FHO sequence for $\Lambda$.
\end{thm}

\begin{proof} For any generic point $x\in D(M)$, the green path $\gamma_x$ passes through $x$. Thus, it suffices to show that, for any generic green path $\gamma$ passing through $D(M_k)$ at $t=t_k$, $\cW(\gamma(t_k))=add\,M_k$. This holds for $k=1$ since $M_1$ is simple. Let $\cW_j=\cW(\gamma(t_j))$ and suppose by induction on $k$ that $\cW_j=add\,M_j$ for $j\le k$. By Proposition \ref{prop: wall sequence is hom orthog}, it follows that $M_1,\cdots,M_{k}$ is a FHO sequence in $\Lambda$-$mod$. We will use Proposition \ref{prop: iterated mutation} with $s=1$ to show that $\cW_{k+1}=add\,M_{k+1}$.

Let $M=M_1\oplus\cdots \oplus M_k$. Let $\cF$ be the extension closed full subcategory of $\Lambda\text-mod$ of all $X$ whose HN-filtration has subquotients only in $\cW_j$ for $j\ge k+2$. Then $\cF\subset M^\perp$ and $\cW_{k+1}=M^\perp\cap \,^\perp\cF$ by Theorem \ref{thm: HN filtration}.

Using the notation of Proposition \ref{prop: iterated mutation}, let $\Lambda'$ be the iterated mutation of $\Lambda$ corresponding to $M$ and take $N\in \Lambda'\text-mod$ so that there is an equivalence of categories 
\[
\psi:M^\perp\cong \,^{\perp}N.\]
Then we claim that $Y=\psi(M_{k+1})\in\,^{\perp}N$ is a simple $\Lambda'$-module and $\psi(\cW_{k+1})=add\,Y$.

To see this, let $S$ be any simple quotient of $Y$. Since $\,^{\perp}N$ is closed under quotients, $S\in\,^{\perp}N$. But $Y\in \,^{\perp}\psi(\cF)$ implies $S\in \,^{\perp}\psi(\cF)$ which implies that $\psi^{-1}(S)\in M^\perp\cap \,^\perp \cF=\cW_{k+1}$. By Proposition \ref{prop: generic x means W(x) has only one vector} the dimension vectors of $\psi^{-1}(S)$, $M_{k+1}$ and any $W\in \cW_{k+1}$ are proportional. By Proposition \ref{prop: iterated mutation}, the dimension vectors of $S$, $Y$ and $\psi(W)$ are proportional. Since $S$ is simple, these all lie in $add\,S$. So $\psi(\cW_{k+1})=add\,S$ and $S$ is the unique Schurian object of $\psi(\cW_{k+1})$. Since $M_{k+1}\in \cW_{k+1}$ is Schurian, $\psi(M_{k+1})\cong S$ is simple as claimed. Since $\psi$ is an equivalence of categories this implies that $\cW_{k+1}=add(M_{k+1})$ proving the theorem.
\end{proof}

\subsection{Compartments of $L(\Lambda)$}\label{sec33}

Since $\Lambda=J(Q,W)$ has finite representation type, $L(\Lambda)=\bigcup D(M)$ is closed. So, its complement $\RR^n-L(\Lambda)$ is a disjoint union of connected open sets $\cU$ which we call \emph{compartments}. We will associate a torsion pair $(\cG(\cU),\cF(\cU))$ to each compartment and use these to prove the converse of Theorem \ref{thm: walls are strongly Schurian}. Namely, any complete FHO sequence for $\Lambda$ is given by a generic green path.

\begin{thm}
Let $\Lambda$ be of finite representation type. Then each compartment of $L(\Lambda)$ is convex.
\end{thm}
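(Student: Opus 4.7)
The plan is to cut each compartment out of $\RR^n$ by finitely many strict linear inequalities using an associated torsion pair, and read off convexity directly. For each $x\in\RR^n\setminus L(\Lambda)$ I would associate a torsion pair $(\cG(x),\cF(x))$ in $\Lambda\text-mod$ using Proposition \ref{prop: HN filtration}: take any generic green path $\gamma$ through $x$ and let $\cG(x)$ be the extension closure of those Schurian modules $N$ such that $\gamma$ meets $D(N)$ strictly before reaching $x$, with $\cF(x)=\cG(x)^\perp$. Uniqueness of the HN filtration makes this independent of the choice of generic green path through $x$. The torsion pair is constant on a compartment: given a smooth path $\sigma\subseteq\cU$, a continuously deforming generic green path $\gamma_{\sigma(s)}$ changes the set of walls it meets only when $\sigma(s)$ crosses $L(\Lambda)$, which never happens inside $\cU$. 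Call the common torsion pair $(\cG(\cU),\cF(\cU))$; by finite representation type it admits finite sets $\cS^+_\cU\subset\cG(\cU)$ and $\cS^-_\cU\subset\cF(\cU)$ of Schurian generators (take the full list of walls met by $\gamma_x$ for any $x\in\cU$).

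Next define
\[
V_\cU:=\{x\in\RR^n\mid x\cdot\undim N>0\text{ for all }N\in\cS^+_\cU,\ x\cdot\undim N<0\text{ for all }N\in\cS^-_\cU\}.
\]
As a finite intersection of strict open half-spaces, $V_\cU$ is open and convex. The inclusion $\cU\subseteq V_\cU$ is immediate: for $x\in\cU$ and $N\in\cS^+_\cU$, the wall $D(N)$ is crossed by $\gamma_x$ strictly before $t=0$, which forces $x\cdot\undim N>0$, and symmetrically for $\cS^-_\cU$. Conversely it suffices to show $V_\cU\cap L(\Lambda)=\emptyset$; then the convex (hence connected) set $V_\cU$ lies in a single compartment, which must be $\cU$.

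So suppose $x\in V_\cU\cap D(M)$ for some Schurian $M$. From $x\in V_\cU$, neither $M\in\cG(\cU)$ nor $M\in\cF(\cU)$: if $M\in\cG(\cU)$ then $M$ is filtered by generators from $\cS^+_\cU$, forcing $x\cdot\undim M>0$ and contradicting $x\cdot\undim M=0$, and symmetrically for $\cF(\cU)$. Hence the canonical torsion sequence $0\to tM\to M\to fM\to 0$ with $tM\in\cG(\cU)$ and $fM\in\cF(\cU)$ has both terms nonzero, so $tM$ is a proper nonzero submodule of $M$. Filtering $tM$ by $\cS^+_\cU$ yields $x\cdot\undim tM>0$, contradicting the defining condition $x\cdot\undim M'\le 0$ for all $M'\subset M$ of membership in $D(M)$. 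Thus $\cU=V_\cU$ is convex.

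The main obstacle is the torsion-pair construction and its local constancy on each compartment: this relies on finite representation type (equivalently, $\tau$-tilting finiteness) to ensure $(\cG(\cU),\cF(\cU))$ is finitely generated by Schurian modules and to control how the HN structure of every $\Lambda$-module varies with $x$. Alternatively one could appeal to the Br\"ustle-Smith-Treffinger or Demonet-Iyama-Jasso wall-and-chamber framework, which is tailored to exactly this situation and immediately yields the convex-cone description of chambers.
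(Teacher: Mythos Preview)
Your approach is genuinely different from the paper's and more structural, but the argument as written has a real gap precisely where you flag it. The sentence ``a continuously deforming generic green path $\gamma_{\sigma(s)}$ changes the set of walls it meets only when $\sigma(s)$ crosses $L(\Lambda)$'' is not justified: a generic green path need not meet every wall $D(N)$, and the set of walls it does meet can in principle change as the basepoint moves. Even granting constancy, the step ``$D(N)$ is crossed by $\gamma_x$ strictly before $t=0$, which forces $x\cdot\undim N>0$'' requires $t\mapsto\gamma_x(t)\cdot\undim N$ to be monotone, which holds for the linear path but not for an arbitrary green path (the green condition controls the derivative only at points of $D(N)$, not of $H(N)\setminus D(N)$). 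Note also that the paper establishes the torsion pair of a compartment only \emph{after} proving convexity, using convexity to splice paths, so that later lemma is not available to you here.

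The gap is repairable without paths: for $x\notin L(\Lambda)$ set $\cF(x)=\{M:x\cdot\undim M'<0\ \text{for all}\ 0\neq M'\subseteq M\}$. Since $x\notin L(\Lambda)$, whenever some $x\cdot\undim M'\ge 0$ one can descend to a strictly positive submodule, so both $M\in\cF(x)$ and $M\notin\cF(x)$ are open conditions in $x$; with finitely many indecomposables this makes $\cF(x)$ locally constant on $\RR^n\setminus L(\Lambda)$, hence constant on $\cU$. Your $V_\cU$ and the final torsion-sequence contradiction then go through cleanly, and you gain an explicit half-space description of each compartment.

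By contrast, the paper's proof is entirely elementary and avoids torsion pairs. Order the Schurian modules by dimension, set $L_k=\bigcup_{i\le k}D(M_i)$, and induct on $k\ge n$ to show every component of $\RR^n\setminus L_k$ is convex. The base case holds since $D(S_i)=H(S_i)$ are hyperplanes. For the step, if $\cU$ is a convex open component of $\RR^n\setminus L_k$, then $\cU\cap D(M_{k+1})$ is clopen in the connected set $\cU\cap H(M_{k+1})$ (closed because $D(M_{k+1})$ is closed; open because $\partial D(M_{k+1})\subset L_k$ misses $\cU$), hence empty or all of it. So $\cU\setminus D(M_{k+1})$ is $\cU$ or $\cU$ cut by a hyperplane. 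This three-line topological argument needs no HN machinery at all.
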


\begin{proof}
Order the Schurian modules $M_i$ according to dimension. Thus $M_1,\cdots,M_n$ are the simple modules. For each $k\ge n$, let 
\[
L_k(\Lambda)=D(M_1)\cup D(M_2)\cup\cdots\cup D(M_k).
\]
Then we show, by induction on $k\ge n$, that each component of $\RR^n-L_k(\Lambda)$ is convex. Since $D(M_1),\cdots,D(M_n)$ are hyperplanes, this statement holds for $k=n$.

Let $\cU$ be any component of $\RR^n-L_k(\Lambda)$. By induction, $\cU$ is convex and open.
\vs2 

\underline{Claim}: \emph{$\cU\cap D(M_{k+1})$ is either empty or equal to $\cU\cap H(M_{k+1})$.} \vs2

In either case, $\cU-L_{k+1}(\Lambda)$ is a disjoint union of convex open sets, proving the theorem. Thus, it suffices to prove the claim. But this is an easy topological argument. If $\cU\cap D(M_{k+1})$ is nonempty then it is a subset of the connected set $\cU\cap H(M_{k+1})$ which is both (relatively) open and closed. It is closed since $D(M_{k+1})$ is a closed set. It is open since $\partial D(M_{k+1})\subset L_k(\Lambda)$. Therefore, $\cU\cap D(M_{k+1})=\cU\cap H(M_{k+1})$ proving Claim and Theorem.
\end{proof}

\begin{lem}
For any $\Lambda$ of finite representation type, each compartment $\cU$ of $L(\Lambda)$ has at least $n$ walls $D(M_i)$.
\end{lem}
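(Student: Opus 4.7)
Suppose for contradiction that $\cU$ has only $k<n$ walls $D(M_1),\ldots,D(M_k)$. The strategy is to exploit the linear dependencies among the normals $\undim M_i$ to force $\cU$ to be translation-invariant in some nonzero direction, and then use the simple modules to exclude any line from lying in a compartment.

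The first step is to identify $\cU$ explicitly as an open polyhedral chamber. Because $\cU$ is convex by the previous theorem and there are only finitely many walls (since $\Lambda$ is of finite type, so only finitely many $D(M)$ exist), each hyperplane $H(M_i)=\undim M_i^\perp$ is a supporting hyperplane, and $\cU$ lies on a single open side, say $\epsilon_i\in\{\pm 1\}$. Setting
\[
C:=\bigcap_{i=1}^{k}\{x\in\RR^n\,:\,\epsilon_i\,(x\cdot\undim M_i)>0\},
\]
I would argue $\cU=C$ by a clopen argument: $C$ is open, convex and connected, and certainly contains $\cU$; moreover $\partial\cU\subset\bigcup_i D(M_i)\subset\bigcup_i H(M_i)$, and every point of $\bigcup_i H(M_i)$ fails one of the strict inequalities defining $C$, so $\partial\cU\cap C=\emptyset$. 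Hence $\cU$ is also closed in $C$, and being open, closed, and nonempty in the connected set $C$, must equal $C$.

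Next, since $k<n$, the normals $\undim M_1,\ldots,\undim M_k$ span a proper subspace $V\subsetneq\RR^n$, so one can pick a nonzero $v\in V^\perp$. The defining inequalities of $C$ depend on $x$ only through the pairings $x\cdot\undim M_i$, which are unchanged upon replacing $x$ by $x+tv$. Thus $C$ is translation-invariant in the direction $v$, and for any fixed $x_0\in\cU=C$ the entire line $\{x_0+tv\,:\,t\in\RR\}$ lies inside $\cU$.

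The final step is the contradiction: no line can lie in a compartment. Since $v\neq 0$, some coordinate $v_j$ of $v$ is nonzero, and hence the line meets the coordinate hyperplane $\{x_j=0\}=H(S_j)$. But $S_j$ is simple, so has no proper submodules, and therefore $D(S_j)=H(S_j)\subset L(\Lambda)$; the intersection point is thus in $L(\Lambda)$, which is disjoint from $\cU$. This contradicts the inclusion of the whole line in $\cU$, proving the lemma. The main obstacle I expect is making the identification $\cU=C$ fully rigorous: one has to know that the walls really cover the entire topological boundary of $\cU$ (so that no ``hidden'' hyperplane $H(M')$ with $M'$ not a wall can cross $\cU$), and that convexity prevents $\cU$ from straddling the portion $H(M_i)\setminus D(M_i)$ of a wall's ambient hyperplane; both points rely essentially on the convexity result established just above.
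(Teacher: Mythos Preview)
Your proof is correct and takes essentially the same approach as the paper: both pick a nonzero vector in $\bigcap_i H(M_i)$ (your $v$, the paper's $x$) and use it to force $\cU$ to contain points on both sides of some coordinate hyperplane $D(S_j)=H(S_j)\subset L(\Lambda)$. The only difference is cosmetic: rather than proving the identification $\cU=C$, the paper takes $\cV\subset\cU$ to be a chamber of the arrangement $\{H(M_i)\}$ and notes that $\pm x\in\overline{\cV}$, which sidesteps (or rather leaves equally implicit) precisely the boundary issue you flag at the end.
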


\begin{proof}
Suppose not and let $D(M_i)$ be the walls of $\cU$. Let $\cV\subset \cU$ be one component of the complement of the hyperplanes $H(M_i)$. Then $\cap H(M_i)$ contains a nonzero vector $x$ and its negative $-x$. These lie in the closure of $\cV$. Since $x\neq0$ it has at least one nonzero coordinate $x_j\neq0$. By symmetry assume $x_j>0$. So, $cV$ and thus $\cU$ contains two points $y$, $z$ so that $y_j>0$ and $z_j<0$. But this is impossible since these two points are separated by the hyperplane $D(S_j)=H(S_j)$.
\end{proof}

\begin{defn}
Let $\Lambda$ be an algebra of finite representation type and let $\cU$ be any compartment of $L(\Lambda)$. We say that a generic green path $\gamma$ is \emph{centered} at $\cU$ if $\gamma[0,1]\subset \cU$. For any such path call the path $\gamma(t),t\le0$ the \emph{left part} of $\gamma$. The \emph{right part} is $\gamma(t),t\ge1$. 
\end{defn}

We observe that, since $\cU$ is convex, the left part of any generic green path $\gamma$ centered at $\cU$ can be \emph{spliced} together with the right part of any other generic green path $\gamma'$ centered at $\cU$ to give a new generic green path centered at $\cU$ with left part the same as $\gamma$ and right part the same as $\gamma'$.

We get the following well-known bijection between support tilting objects for $\Lambda\text-mod$ and finitely generated torsion classes by way of the compartments of $L(\Lambda)$.

\begin{lem}
For $\Lambda=J(Q,W)$ of finite representation type and $\cU$ any component of $L(\Lambda)$. Let $\gamma$ be any generic green path centered at $\cU$. Let $D(M_1),\cdots,D(M_k)$ be the walls crossed by the left part of $\gamma$. Let $\cF=M^\perp$ and $\cG=\,^\perp \cF$ where $M=M_1\oplus\cdots\oplus M_k$. Let $C$ be the $c$-matrix of the mutation sequence corresponding to the FHO sequence $M_1,\cdots,M_k$. Then $C$ and the torsion pair $(\cG,\cF)$ depend only on $\cU$ and are independent of the choice of $\gamma$ up to permutation of the columns of $C$. 
\end{lem}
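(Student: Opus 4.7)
The strategy is to splice generic green paths using convexity of $\cU$. Given two generic green paths $\gamma,\gamma'$ centered at $\cU$, form a spliced path $\gamma''$ whose left part $(t\le 0)$ agrees with $\gamma$ and whose right part $(t\ge 1)$ agrees with $\gamma'$, joined by a smoothed straight segment inside $\cU$ from $\gamma(0)$ to $\gamma'(1)$. Because $\cU\cap L(\Lambda)=\emptyset$, the middle segment meets no wall, and (after a slight perturbation if needed to secure genericity) $\gamma''$ is a generic green path centered at $\cU$ crossing exactly $D(M_1),\ldots,D(M_k)$ followed by $D(N'_1),\ldots,D(N'_{\ell'})$; by Theorem~\ref{thm: walls are strongly schurian}, the concatenation $M_1,\ldots,M_k,N'_1,\ldots,N'_{\ell'}$ is a maximal forward hom-orthogonal sequence.

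\textbf{Invariance of the torsion pair.} I would first prove that $\cF_\gamma=M^\perp$ equals the extension closure $\cE(\gamma)$ in $\Lambda\text-mod$ of the right walls $\{N_1,\ldots,N_\ell\}$ of $\gamma$. The inclusion $\cE(\gamma)\subseteq M^\perp$ follows from forward hom-orthogonality together with the closure of $M^\perp$ under extensions and subobjects. Conversely, given $Y\in M^\perp$, apply Proposition~\ref{prop: HN filtration}: the HN subquotients of $Y$ lie in $\add(P_j)$ for various Schurian walls $P_j$ (by Theorem~\ref{thm: walls are strongly schurian}), and if any $P_j$ were a left wall $M_i$, then the bottom subquotient $Y_1\in\add(M_i)$ would embed into $Y$ and contradict $\Hom(M_i,Y)=0$. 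Hence $Y\in\cE(\gamma)$. Applied to $\gamma''$, this gives $\cF_\gamma=\cF_{\gamma''}=\cE(\gamma'')$; but the right walls of $\gamma''$ coincide with those of $\gamma'$, so $\cE(\gamma'')=\cE(\gamma')=\cF_{\gamma'}$. Thus $(\cG,\cF)=(\cG_{\gamma'},\cF_{\gamma'})$. Since $\cG\cap\cF=0$ for any torsion pair, each nonzero $M_i\in\cG_{\gamma'}$ cannot equal any $N'_j\in\cF_{\gamma'}$; if some $M_i$ also failed to equal any $M'_j$, then $M_i$ could be inserted into the maximal sequence for $\gamma'$, contradicting maximality. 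By symmetry, $\{M_1,\ldots,M_k\}=\{M'_1,\ldots,M'_{k'}\}$ as sets.

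\textbf{Invariance of $C$ and the main obstacle.} By Corollary~\ref{cor: main thm of section 2}, the ordered sequence $(M_1,\ldots,M_k)$ determines a mutation sequence whose terminal extended exchange matrix has bottom block $C$. Proposition~\ref{prop: iterated mutation} further produces an algebra $\Lambda'$, an object $N\in\Lambda'\text-mod$, and an equivalence $\psi\colon M^\perp\cong{}^{\perp'}N$, with the columns of $C$ corresponding (via the linear automorphism $\varphi$) to the dimension vectors of the simple $\Lambda'$-modules. Since $M^\perp$ depends only on $\cU$, the triple $(\Lambda',N,\psi)$ is determined up to isomorphism, so the multiset of columns of $C$ depends only on $\cU$; different orderings of the left walls arising from different paths produce different labelings of the vertices of the terminal quiver and hence column permutations of the same $c$-matrix. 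The principal technical obstacle is precisely this last identification: making rigorous that the output $(\Lambda',N)$ of Proposition~\ref{prop: iterated mutation} depends only on the unordered set $\{M_1,\ldots,M_k\}$ up to isomorphism of quivers-with-potential respecting vertex relabeling — the cluster-algebraic fact that the seed at a cluster is intrinsic to the cluster modulo permutation of its variables. Once this is secured, invariance of $C$ up to column permutation follows immediately from the invariance of the torsion pair.
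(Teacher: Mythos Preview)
Your splicing argument for the torsion pair is exactly the paper's approach. The paper states the key point more tersely: since the full sequence $M_1,\ldots,M_k,M_{k+1},\ldots,M_m$ is maximal forward hom-orthogonal, one has $\cG={}^\perp(M_{k+1}\oplus\cdots\oplus M_m)$ and $\cF=\cG^\perp$, so $(\cG,\cF)$ is computable from either half of $\gamma$; splicing then transports the result from $\gamma$ to $\gamma''$ (same left part) to $\gamma'$ (same right part).

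Where you diverge is the $c$-matrix. Your route through Proposition~\ref{prop: iterated mutation} leads straight to the obstacle you name, and that obstacle is genuine: showing that two forward hom-orthogonal orderings of the same set $\{M_1,\ldots,M_k\}$ produce isomorphic $(\Lambda',N)$ up to vertex relabeling is essentially the well-definedness of the seed at a vertex of the exchange graph, which this paper does not establish independently and which would be awkward to import. The paper bypasses this with a single observation: every maximal green sequence terminates at the \emph{same} extended exchange matrix (initial $B$ on top, $-I_n$ on the bottom). Hence the intermediate $c$-matrix $C$ can be recovered by \emph{backward} mutation from that fixed endpoint along the right part of $\gamma$, so $C$ up to column permutation is determined by the right walls alone. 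The identical splicing argument that worked for $(\cG,\cF)$ then finishes the proof for $C$. This is both shorter and avoids any appeal to external cluster-algebraic facts; your detour through $(\Lambda',N,\psi)$ is unnecessary once you notice that the endpoint of the full MGS is fixed.
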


\begin{proof}
Let $D(M_{k+1}),\cdots,D(M_m)$ be the walls crossed by the right part of $\gamma$. Then $\cG=\,^\perp M'$ and $\cF=\cG^\perp$ where $M'=M_{k+1}\oplus \cdots\oplus M_m$. Therefore $(\cG,\cF)$, defined using the left part of $\gamma$, depends only on the right part of $\gamma$. Given two paths $\gamma,\gamma'$ centered at $\cU$ splice them to get $\gamma''$. Then $\gamma,\gamma''$ give the same torsion pair since they have the same right part and $\gamma',\gamma''$ give the same torsion pair since they have a common left part. So, $\gamma,\gamma'$ give the same torsion pair. 

Similarly, the matrix $C$, defined using the left part of $\gamma$, is determined, up to permutation of its columns, by the right part of $\gamma$ since it can be obtained by backward mutation from the final exchange matrix which is the same as the initial exchange matrix with $-I_n$ as $c$-matrix.
\end{proof}

The columns of $C$ will be called the \emph{c-vectors} of $\cU$. The torsion class of $\cU$ is denoted $\cG(\cU)$. A wall $D(M)$ of $\cU$ will be called \emph{positive} if $\cU$ is on the negative side of $D(M)$, i.e., $(x-y)\cdot\undim M<0$ for any $x\in \cU$ and $y\in D(M)$. Otherwise, the wall is \emph{negative}.

\begin{thm}\label{thm 3} \emph{(a)} Each positive wall of $\cU$ is $D(M)$ where $\undim M$ is a $c$-vector of $\cU$.

\emph{(b)} Each negative wall of $\cU$ is $D(N)$ where -$\undim N$ is a $c$-vector of $\cU$.
\end{thm}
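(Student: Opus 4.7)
The plan is to construct, for each wall $D(M)$ of $\cU$, a generic green path $\gamma$ centered at $\cU$ whose first right wall is $D(M)$ (when $D(M)$ is positive) or whose last left wall is $D(M)$ (when $D(M)$ is negative). Granted such a $\gamma$, the preceding lemma identifies the c-matrix $C$ of $\cU$ with the one produced from the initial c-matrix $I_n$ by the mutation sequence corresponding to the left walls $M_1,\ldots,M_k$ of $\gamma$, so I can read off the desired c-vector from the appropriate step of this sequence.

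For part (a), choose such a $\gamma$ with first right wall $D(M_{k+1})=D(M)$. By Theorem \ref{thm: inductive thm implying main thm}, the green mutation extending the forward hom-orthogonal sequence by $M_{k+1}$ mutates $C$ at the column with positive c-vector $\undim M_{k+1}=\undim M$; hence $\undim M$ is a column of $C$. For part (b), choose $\gamma$ with last left wall $D(M_k)=D(N)$. The c-matrix $C'$ of the compartment on the other side of $D(N)$ then has $\undim N$ as a positive column, and the Fomin-Zelevinsky mutation rule transforms $C'$ into $C$ by negating precisely that column. Therefore $-\undim N$ is a column of $C$, as required.

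The main obstacle is the geometric construction of such a $\gamma$. I would first pick a generic point $y$ in the relative interior of the facet $\overline\cU\cap D(M)$; these form a dense subset of the facet because each other wall $D(M')$ cuts the hyperplane $H(M)$ in a subset of codimension at least one. By convexity of $\overline\cU$, the straight segment from a suitably chosen generic interior point $x_0\in\cU$ to $y$ meets no wall except at the endpoint $y$. I would then extend this segment through $y$ by a short smooth transverse arc whose velocity at $y$ pairs positively with $\undim M$ (possible since $\cU$ sits on the negative side of $D(M)$ in case (a), or positive side in case (b) traversed in the reverse direction), and splice linear green paths going to $\pm\infty$ at both ends. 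A small generic perturbation of $x_0$ and the splicing endpoints ensures that the resulting $\gamma$ crosses every wall at a generic point. Convexity guarantees that no other wall of $\cU$ is crossed between $x_0$ and $y$, so $D(M)$ is indeed the first right wall in case (a) and, by symmetric reasoning applied to the incoming portion, the last left wall in case (b).
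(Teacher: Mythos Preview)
Your proposal is correct and follows essentially the same approach as the paper: construct a generic green path crossing the given wall as the first right wall (case (a)) or last left wall (case (b)) and read off the $c$-vector from the corresponding green mutation step, using that mutation negates the mutated column. The paper's version is terser---it simply asserts one can ``choose a path from $\cV$ to $\cU$ and extend to a green path centered at $\cU$'' and then reads off the $c$-vector---whereas you spell out the geometric details (generic facet point, convexity, transverse extension, splicing) that make that choice legitimate.
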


\begin{proof}
(b) Let $D(N_0)$ be a negative wall of $\cU$. Let $\cV$ be the region opposite $D(N_0)$. Choose a path from $\cV$ to $\cU$ and extend to a green path centered at $\cU$. By Theorem \ref{thm: walls are strongly Schurian} this gives a complete FHO sequence $M_1,\cdots,M_k,N_0,N_1,\cdots,N_m$ so that $\cG(\cU)=\cG(M_1\oplus\cdots\oplus M_k\oplus N_0)$. This corresponds to a green $c$-sequence $\beta_1,\cdots,\beta_k,\alpha_0,\alpha_1,\cdots, \alpha_m$ where $\beta_i=\undim M_i$, $\alpha_j=\undim N_j$. So, $\undim N_0$ is a positive $c$-vector for $\cV$ and a negative $c$-vector of $\cU$. The proof of (a) is the same with $\cV,\cU$ reversed.
\end{proof}

Since $\cU$ has at least $n$ walls and at most $n$ $c$-vectors we get the following.

\begin{cor}
For $\Lambda=J(Q,W)$ of finite representation type, each compartment $\cU$ has exactly $n$ walls $D(M_i)$ where $\undim M_i$ are the $c$-vectors of $\cU$ up to sign.\qed
\end{cor}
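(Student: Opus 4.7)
The plan is to combine the two preceding results: the lemma guarantees at least $n$ walls, and Theorem \ref{thm 3} shows every wall corresponds to a $c$-vector (up to sign). Since the $c$-matrix of $\cU$ is $n \times n$, there are exactly $n$ $c$-vectors, so the task is to upgrade Theorem \ref{thm 3} into a bijection between walls of $\cU$ and $c$-vectors up to sign, and then invoke both bounds.

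Concretely, I would define a map $\Phi$ from the set of walls of $\cU$ to the set of $c$-vectors of $\cU$ taken with their signs reversed for negative walls: send a positive wall $D(M)$ to $\undim M$ and a negative wall $D(N)$ to $-\undim N$, so that in either case $\Phi$ lands in the actual $c$-vectors of $\cU$. Theorem \ref{thm 3} asserts exactly that $\Phi$ is well-defined and surjective onto the set of $c$-vectors, hence $|\Phi| \le n$. For injectivity, if two walls map to the same $c$-vector then the underlying dimension vectors agree in absolute value; since they are both nonnegative integer vectors, they are equal, so both walls lie in the same hyperplane $H=H(M)$. But $\cU$ is a convex open set in $\RR^n$, so $\partial\cU \cap H$ is a single (possibly empty) face of $\cU$, which is to say a single wall; hence the two walls coincide.

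Combining gives that the number of walls of $\cU$ is at most $n$, and the preceding lemma gives at least $n$, so equality holds and $\Phi$ is a bijection. This is exactly the statement of the corollary. There is no real obstacle here: the content is in Theorem \ref{thm 3} (which does the representation-theoretic work) and the lower bound lemma (which uses the positivity of the hyperplanes $D(S_j)$); the corollary itself is a counting argument whose only subtle point is the injectivity of $\Phi$, which reduces to the convex-geometric fact that a convex open region has a unique face in each of its supporting hyperplanes.
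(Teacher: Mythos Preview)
Your approach is essentially the paper's: combine the lower bound lemma ($\ge n$ walls) with Theorem~\ref{thm 3} (each wall yields a $c$-vector) and the fact that there are only $n$ $c$-vectors. The paper's one-line justification leaves injectivity implicit; you spell it out, which is a welcome addition.

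There is one misstatement to fix. You write that Theorem~\ref{thm 3} asserts $\Phi$ is ``well-defined and surjective onto the set of $c$-vectors, hence $|\Phi|\le n$.'' Theorem~\ref{thm 3} does \emph{not} assert surjectivity: it only says every wall maps to some $c$-vector, i.e., $\Phi$ is a well-defined map into an $n$-element set. Surjectivity is a \emph{conclusion} of the corollary, not an input. Fortunately your argument never actually uses surjectivity: what you need (and what you then correctly supply) is injectivity of $\Phi$, which together with the codomain having $n$ elements gives $|\text{walls}|\le n$. So the logic is sound once you replace ``surjective'' with ``has image contained in the $c$-vectors.''

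Your injectivity argument is correct. If two walls map to the same $c$-vector they lie in the same hyperplane $H$; since $\cU$ is open and convex, any codimension-one face of $\overline{\cU}$ contained in $H$ forces $H$ to be a supporting hyperplane, so $\overline{\cU}\cap H$ is a single convex (hence connected) face and the two walls coincide. This is the one point the paper glosses over, and your convex-geometry justification fills the gap cleanly.
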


\begin{cor}\label{cor: formerly 3.16}
For every FHO sequence $M_1,\cdots,M_k$ for $\Lambda$ there is a generic green path whose left part passed through the walls $D(M_1),\cdots,D(M_k)$.
\end{cor}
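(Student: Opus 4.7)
The plan is to extend $M_1,\ldots,M_k$ to a maximal forward hom-orthogonal sequence $M_1,\ldots,M_m$ and then construct a piecewise-linear generic green path that crosses the walls $D(M_1),\ldots,D(M_m)$ in order. For the extension, Theorem \ref{thm: inductive thm implying main thm} shows that $M_1,\ldots,M_k$ corresponds to a green mutation sequence of length $k$; in finite type every such sequence continues to a maximal green sequence (the set of reachable $c$-matrices is finite and each green mutation strictly enlarges the torsion class), and Corollary \ref{cor: main thm of section 2} translates this back to a maximal forward hom-orthogonal extension.

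For each $0\le i\le m$, the Lemma identifying compartments with torsion classes of the form $\cG(M_1\oplus\cdots\oplus M_i)$ yields an intermediate compartment $\cU_i$, with $\cU_0$ the negative orthant and $\cU_m$ the positive orthant. The single green mutation passing from $\cU_{i-1}$ to $\cU_i$ adds the Schurian module $M_i$, so by Theorem \ref{thm 3} the wall $D(M_i)$ is a positive wall of $\cU_{i-1}$ (placing $\cU_{i-1}$ on its negative side) and a negative wall of $\cU_i$ (placing $\cU_i$ on its positive side). Combined with the Corollary that every compartment has exactly $n$ walls, this ensures that $D(M_i)\cap\overline{\cU_{i-1}}\cap\overline{\cU_i}$ is a genuine codimension-one face of both, with nonempty relative interior.

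To build the path, I would pick a generic $x_i\in\cU_i$ for each $i$, a point $y_i$ generic in the relative interior of $D(M_i)\cap\overline{\cU_{i-1}}\cap\overline{\cU_i}$ for each $1\le i\le m$, and concatenate the line segments $x_0y_1,y_1x_1,\ldots,y_mx_m$, prepended and appended by straight rays from $x_0$ into the negative orthant and from $x_m$ into the positive orthant. By convexity of each compartment (the theorem above), the segment $x_{i-1}y_i$ lies in $\overline{\cU_{i-1}}$ and meets the boundary only at $y_i$, and analogously for $y_ix_i$, so the piecewise-linear path meets walls only at the generic points $y_i$. Smoothing the corners at each $x_i$ inside small neighborhoods disjoint from $L(\Lambda)$ yields a generic path $\gamma$; at each crossing, the velocity vector points from the negative side to the positive side of $D(M_i)$, so $\gamma'(t_i)\cdot\undim M_i>0$, proving $\gamma$ is green. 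Its left part crosses exactly $D(M_1),\ldots,D(M_k)$ in order.

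The main obstacle is the geometric content of the second paragraph: verifying that consecutive compartments $\cU_{i-1}$ and $\cU_i$ genuinely share $D(M_i)$ as a codimension-one common face. This is where the finite-type hypothesis enters critically, via the Corollary on the number of walls of a compartment. Everything else rests on routine convexity arguments together with the sign information already packaged in Theorem \ref{thm 3}.
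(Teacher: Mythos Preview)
Your overall strategy matches the paper's, but there is a logical gap in the second paragraph. The Lemma you invoke goes in only one direction: given a compartment $\cU$ and any generic green path centered there, the resulting torsion pair and $c$-matrix depend only on $\cU$. It does \emph{not} say that every torsion class of the form $\cG(M_1\oplus\cdots\oplus M_i)$ arises from some compartment; that surjectivity is essentially what the corollary is proving. So the sentence ``yields an intermediate compartment $\cU_i$'' is unjustified as written, and everything downstream (the appeal to Theorem~\ref{thm 3} to identify the walls of $\cU_{i-1}$, which presupposes you already know its $c$-vectors) rests on it. The obstacle you flag at the end is therefore one step earlier than you think: not that $\cU_{i-1}$ and $\cU_i$ share a face, but that $\cU_i$ exists at all with the prescribed $c$-matrix.

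The paper closes this gap by running the argument as an induction on $k$, interleaving the construction of the compartment and the path rather than separating them. Assume inductively a generic green path $\gamma$ centered at some $\cU$ whose left part crosses $D(M_1),\ldots,D(M_{k-1})$. The Lemma, applied to this particular $\gamma$, pins down the $c$-vectors of $\cU$; since forward hom-orthogonal sequences correspond to green mutation sequences (Theorem~\ref{thm: inductive thm implying main thm}), $\undim M_k$ is one of the positive $c$-vectors of $\cU$; Theorem~\ref{thm 3}(a) then says $D(M_k)$ is a positive wall of $\cU$; now modify the right part of $\gamma$ to cross $D(M_k)$ first into the adjacent compartment. Your piecewise-linear construction is perfectly good for this last splicing step and is more explicit than the paper's terse ``modify the right part'', but it must be fed by the induction, not by a pre-existing list of compartments. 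There is also no need to first extend to a maximal sequence.
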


\begin{proof}
Let $M_1,\cdots,M_{k}$ is a FHO sequence. Then, by induction on $k$, there is a green path $\gamma$ centered in some compartment $\cU$ whose left part passes through $D(M_1),\cdots,D(M_{k-1})$. Since FHO sequences correspond to mutation sequences, one of the positive $c$-vectors of $\cU$ must be $\undim M_k$. So, $D(M_k)$ is one of the positive walls of $\cU$. Let $\cV$ be the compartment on the other side of this wall. Then the right part of $\gamma$ can be modified so that it first passes though $D(M_k)$. This completes the induction.
\end{proof}

Combining this with Theorem \ref{thm: walls are strongly Schurian} we get the main result of this section:

\begin{thm}\label{main thm of section three}
Let $\Lambda=J(Q,W)$ be a Jacobian algebra of finite representation type and let $M_1,\cdots,M_m$ be a sequence of Schurian $\Lambda$-modules. Then $M_1,\cdots,M_m$ is a complete FHO sequence if and only if there exists a generic green path which passes through the walls $D(M_1),\cdots,D(M_m)$ in that order.
\end{thm}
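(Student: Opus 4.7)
The plan is that both directions will follow essentially immediately by combining the two main results already established in this section, namely Theorem \ref{thm: walls are strongly schurian} and the Corollary preceding it. The maximality hypothesis will serve only to pin down exactly which walls are crossed, so no genuinely new argument is needed beyond assembling pieces.

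For the ``if'' direction, I will apply Theorem \ref{thm: walls are strongly schurian} verbatim. If a generic green path $\gamma$ crosses precisely the walls $D(M_1),\ldots,D(M_m)$ in that order, then that theorem immediately yields that each $M_i$ is strongly Schurian and that $M_1,\ldots,M_m$ is a maximal forward hom-orthogonal sequence; nothing further is required here.

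For the ``only if'' direction, suppose $M_1,\ldots,M_m$ is a maximal forward hom-orthogonal sequence. First I would invoke the preceding Corollary to produce a generic green path $\gamma$ whose left part crosses $D(M_1),\ldots,D(M_m)$ in order. Because $\Lambda$ has finite type, $\gamma$ crosses only finitely many walls in total, say $D(M_1),\ldots,D(M_m),D(M_{m+1}),\ldots,D(M_\ell)$. Applying Theorem \ref{thm: walls are strongly schurian} to this complete wall-crossing sequence shows $M_1,\ldots,M_\ell$ is itself a forward hom-orthogonal sequence, so for each $j>m$ and each $i\le m$ we have $\Hom_\Lambda(M_i,M_j)=0$; hence each such $M_j$ lies in $M^\perp = \cF(M)$, where $M=M_1\oplus\cdots\oplus M_m$. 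But maximality of the original sequence means $\cF(M)=0$, which forces $\ell = m$. Consequently $\gamma$ crosses precisely the walls $D(M_1),\ldots,D(M_m)$ and is the desired path. The only potential obstacle is ensuring that the right part of $\gamma$ produced by the Corollary really does extend to a well-defined generic green path crossing only finitely many additional walls, but this is guaranteed by the finite-type hypothesis together with the compartment-and-splicing construction underlying the Corollary; the substantive work of ordering wall-crossings compatibly with hom-orthogonality has already been handled by Proposition \ref{prop: wall sequence is hom orthog}.
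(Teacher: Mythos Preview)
Your proposal is correct and follows the same approach as the paper, which simply states that the theorem follows by combining the preceding Corollary with Theorem \ref{thm: walls are strongly schurian}. Your argument for the ``only if'' direction, showing that maximality forces $\ell=m$ via $\cF(M)=0$, is exactly the natural way to unpack that terse ``combining'' step.
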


Together with Corollary \ref{cor: main thm of section 2} this completes the proof of Theorem \ref{main thm} from the introduction.

\section{Appendix: maximal length MGSs}

This Appendix is joint work of the author with Gordana Todorov.


In this section we consider maximal green sequences of maximal length for cluster-tilted algebras of finite representation type. We describe an upper bound and a lower bound for this maximal length. We give three examples and a conjecture, namely, that the lower bound is sharp. The second example, \ref{eg: Dn}, is joint work with PJ Apruzzese. The third example, \ref{eg: Garver and I}, is joint work with Al Garver.

We use the well-known fact that cluster-tilted algebras are \emph{relation-extensions} of tilted algebras \cite{ABS} and we use the result of \cite{A} describing all tilted algebras of type $A_n$. This is Theorem \ref{Assem's theorem} below. See the lecture notes \cite{A2} for more details about this topic.

If $H$ is a hereditary algebra with $n$ simple modules, a \emph{tilting module} is a module $T$ with $n$ nonisomorphic indecomposable summands which is \emph{rigid}, i.e., so that $\Ext_H^1(T,T)=0$. The endomorphism ring of $T$ is called a \emph{tilted algebra}. In the cluster category $\cC_H$ of $H$ introduced in \cite{BMRRT}, a \emph{cluster-tilting object} is a rigid object $T\in \cC_H$ with $n$ nonisomorphic indecomposable summands. The endomorphism ring of $T$ as an object in $\cC_H$ is called a \emph{cluster-tilted algebra}. 

Every tilting module $T$ for $H$ can also be viewed as a cluster-tilting object in $\cC_H$ under a natural inclusion functor $H\text-mod\into \cC_H$. This is a faithful but not full embedding. It sends rigid modules to rigid objects. In this case, it is shown in \cite{ABS} that the cluster-tilted algebra $\Lambda=\End_{\cC_H}(T)$ is a trivial extension of the tilted algebra $C=\End_H(T)$:
\[
	\Lambda\cong C\ltimes \Ext^2_C(DC,C)
\] 
where $DC=\Hom_K(C,K)$ is the dual of $C$. This is called the \emph{relation-extension} of $C$.

When the hereditary algebra $H$ is the path algebra of an acyclic quiver, it was shown in \cite{BIRSm} that each cluster-tilted algebra $\Lambda=\End_{\cC_H}(T)$ is isomorphic to the Jacobian algebra of a quiver with nondegenerate potential. Restricting to quivers of finite type, the converse also holds, i.e., Jacobian algebras of quivers with potential of finite type in the sense of \cite{FZ2} are mutation equivalent to path algebras of Dynkin quivers and are therefore isomorphic to cluster-tilted algebras of finite representation type. This follows from the finite type classification of cluster algebras \cite{FZ2} and translation into the language of quivers with potential using \cite{DWZ}.

\subsection{Tilted and cluster-tilted algebras of type A}\label{ss34:tilted An}

Tilted algebras of type $A$ were classified by Assem \cite{A}. Iterated tilted algebras of type $A$ were classified in \cite{AH}. Cluster-tilted algebras of type $A$ came later \cite{CCS}, \cite{BV}. However, it is easier to start with the cluster-tilted algebras.

As observed in \cite[2.5]{A2}, a cluster-tilted algebra of type $A$ is given by a finite connected full subquiver of the following infinite quiver (an infinite array of oriented 3-cycles attached together at their vertices) modulo the relation that the composition of any two arrows in any oriented 3-cycle is zero.
\begin{equation}\label{infinite quiver}
\xymatrixrowsep{6pt}\xymatrixcolsep{6pt}
\xymatrix{
&&&&&&&&\vdots\\
&&&&&&&&\bullet\ar[d]\\
&&&&&& \cdots &\bullet\ar[ur]&\bullet\ar[l]\ar[dd]\\
&&&&&& \bullet\ar[d] \\
&&&& \cdots&\bullet\ar[ur] &\bullet\ar[l]\ar[uurr]&&\bullet\ar[ll]\ar[dddd]\\ 
&&&&\bullet\ar[d]\\
&& \cdots &\bullet\ar[ur]&\bullet\ar[l]\ar[dd]\\
& & \bullet\ar[d] \\
\cdots &\bullet\ar[ur] &\bullet\ar[l]\ar[uurr]&&\bullet\ar[ll]\ar[uuuurrrr]&&&&\bullet\ar[llll] \\ 
	}
\end{equation}
Observe that a full subquiver $Q$ containing two arrows in an oriented 3-cycle will contain the entire 3-cycle. Thus, any cluster-tilted algebra of type $A_n$ is given by $\Lambda=J(Q,W)$ where $Q$ is a connected full $n$ vertex subquiver of the above infinite quiver and the potential $W$ is the sum of the oriented 3-cycles in $Q$. 

\begin{rem} \emph{All 3-cycles in $Q$ are oriented 3-cycles} since the infinite quiver \eqref{infinite quiver} contains no unoriented 3-cycles.
\end{rem}

An example is given by the following quiver with relations.
\begin{equation}\label{eq: cluster-tilted A15 example}
\xymatrixrowsep{15pt}\xymatrixcolsep{10pt}
\xymatrix{
  & & &   & 7\ar[rr]  &  &8 & &9\ar[dr]\ar[ll] & & &  &14\ar[dr]\\ 
&2\ar[ld]_{\beta_1} & & &&5\ar[dr] & &10\ar[ru]\ar[ll] && 
	11 \ar[ll]^{\alpha_3}\ar[rr]&
	 &13\ar[ur]^{\alpha_4} \ar[ld]^{\alpha_5}& &15\ar[ll]\\
1\ar[rr]_{\alpha_1} &&3\ar[rr]\ar[lu] \ar[lu]_{\gamma_1}& &4\ar[ru] & &6\ar[ll]^{\alpha_2} & & & 
	 &
	12\ar[ul]
	}
\end{equation}
\eqref{eq: cluster-tilted A15 example} is the quiver of a cluster-tilted algebra of type $A_{15}$. The potential $W$ is understood to be the sum of the five 3-cycles. Thus the relations which give $J(Q,W)$ are $\alpha_1\beta_1=\beta_1\gamma_1=\gamma_1\alpha_1=0$ and similarly for the other four 3-cycles. 

As explained in \cite[Sec 2.5]{A2}, cluster-tilted algebras of type $A_n$ are well-known to be ``gentle algebras'' as defined in \cite{BR} and thus all indecomposable modules are \emph{string modules} which means their supports are linear subquivers and their dimensions are equal to the size of their supports, i.e., they are 1-dimensional at each point in their support. An example of a string module for the quiver with potential \eqref{eq: cluster-tilted A15 example} is the 8-dimensional module with support at the 8 vertices 1,3,4,5,10,11,13,14. We refer to this module later as $X$. Note that the support of $X$ contains the arrows $\alpha_1,\alpha_3,\alpha_4$. One key property of string modules is that the full subquiver on which they are supported contains at most one arrow from each 3-cycle of $Q$.

By \cite{AH} \emph{iterated tilted} (or ``generalized tilted'') algebras are given by deleting one edge from each 3-cycle and retaining the relation that the remaining two arrows of the 3-cycles have zero composition. For example, if we delete the five arrow $\alpha_i$ in \eqref{eq: cluster-tilted A15 example} we obtain the following quiver with five zero relations $\beta_1\gamma_1=0$, etc, as indicated by the dotted lines.
\begin{equation}\label{eq: tilted A15 example}
\xymatrixrowsep{15pt}\xymatrixcolsep{10pt}
\xymatrix{
  & & &   & 7\ar[rr]  &  &8 & &9\ar[dr]\ar[ll] & & &  &14\ar[dr]\\ 
&2\ar[ld]_{\beta_1} & & &&5\ar[dr] & &10\ar[ru]\ar[ll] && 
	11 \ar@{--}[ll]\ar[rr]&
	 &13\ar@{--}[ur] \ar@{--}[ld]& &15\ar[ll]\\
1\ar@{--}[rr] &&3\ar[rr]\ar[lu]_{\gamma_1} \ar@{--}[lu]& &4\ar[ru] & &6\ar@{--}[ll] & & & 
	 &
	12\ar[ul]
	}
\end{equation}

In fact the algebra given by this particular quiver with relations is a \emph{tilted algebra} of type $A_n$ by the classification of such algebras from \cite{A} which we restate below (Theorem \ref{Assem's theorem}) using the ``diagram of a module'' which we now define.

Let $\Lambda=J(Q,W)$ be any cluster-tilted algebra of type $A_n$ as described above. Let $C$ be an associated iterated tilted algebra given by deleting one arrow from each 3-cycle of $Q$. Then $C\subset \Lambda$ and we have a forgetful functor $F:\Lambda\text-mod\to C\text-mod$ given by restriction of scalars. The functor $F$ sends an indecomposable $\Lambda$-modules $X$ to $FX=\bigoplus M_j$ where $M_j$ are indecomposable $C$-modules connected to each other by deleted arrows $\alpha_j$. For example, the $\Lambda$-module $X$ with support $\{1,3,4,5,10,11,13,14\}$ becomes $FX=M_1\oplus M_2\oplus M_3\oplus M_4$ where $M_1,M_4$ are the simple modules at vertices $1,14$ and $M_2,M_3$ are the $C$-modules with supports $\{3,4,5,10\}$ and $\{11,13\}$ respectively. 

\begin{defn}\label{def: diagram of a module}
We define the \emph{diagram} of an indecomposable $\Lambda$-module $X$ (with respect to an associated iterated tilted algebra $C$) to be the linear quiver with vertices labeled with the components $M_i$ of $FX$ and arrows given by the deleted arrows of $Q$ connecting vertices in the support of $X$. The deleted arrow $\alpha$ should connect $M_i$ to $M_j$ if $\alpha$ is an arrow in $Q$ from a vertex $v$ in the support of $M_i$ to a vertex $w$ in the support of $M_j$. 
\end{defn}

In this particular example, the diagram of $X$ is:
\[
	M_1\xrightarrow{\alpha_1} M_2\xleftarrow{\alpha_3} M_3\xrightarrow{\alpha_4} M_4
\]

Using the diagrams of the string modules for $\Lambda=J(Q,W)$, the classification of tilted algebras from \cite{A} can be stated as follows.

\begin{thm}[Assem]\label{Assem's theorem}
An iterated tilted algebra $C$ of type $A_n$ given by deleting arrows from the quiver $Q$ of a cluster-tilted algebra $\Lambda=J(Q,W)$ is a tilted algebra of type $A_n$ if and only if, for every indecomposable $\Lambda$-module $X$, the diagram of $X$ with respect to $C$ has alternating orientation of its arrows.
\end{thm}

The example given by \eqref{eq: tilted A15 example} is a tilted algebra since the diagrams of the string modules $X$ (given above) and $Y,Z$ with supports $\{1,3,4,6\}$ and $\{12,13,14\}$ resp. are:
\[
\bullet\xrightarrow{\alpha_1} \bullet\xleftarrow{\alpha_3} \bullet\xrightarrow{\alpha_4} \bullet
,\quad
	\bullet \xrightarrow{\alpha_1}\bullet\xleftarrow{\alpha_2} \bullet,\quad 
	\bullet \xleftarrow{\alpha_3} \bullet
	\xrightarrow{\alpha_4}\bullet .
\]

\subsection{Upper bound for lengths of MGSs}

For many cluster-tilted algebras of finite representation type the minimum length of a maximal green sequence has been computed \cite{CDRSW}, \cite{GMS}. In particular we have the following.

\begin{thm}\label{thm: minimum length}\cite{CDRSW}
For $\Lambda=J(Q,W)$ a cluster-tilted algebra of type $A_n$ the minimum length of a maximal green sequence is $n+k$ where $k$ is the number of 3-cycles in the quiver $Q$.
\end{thm}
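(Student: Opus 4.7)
The plan is to use Corollary \ref{cor: main thm of section 2} to recast the problem representation-theoretically: the minimum length of an MGS for $\Lambda$ equals the minimum length of a maximal forward hom-orthogonal sequence $(M_1,\ldots,M_m)$ of Schurian $\Lambda$-modules. I would then show this minimum equals $n+k$ by proving the upper and lower bounds separately.

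For the upper bound, I would construct an explicit maximal forward hom-orthogonal sequence of length $n+k$. Using the relation-extension description $\Lambda\cong C\ltimes \Ext^2_C(DC,C)$ of \cite{ABS}, I would cut one arrow from each of the $k$ triangles of $Q$ to obtain a tilted quiver $Q^\delta$ of Dynkin type $A_n$ (a tree). The tilted algebra $C$ is a string/Nakayama-type algebra whose module category admits a natural total order coming from its Auslander--Reiten structure. Pulling back the $n$ simples together with $k$ specific length-two indecomposables (one supported on two consecutive vertices of each triangle) via the surjection $\Lambda\onto C$, the Jacobian relations $\partial_\alpha W=0$ around each triangle force hom-orthogonality. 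Maximality then follows from Proposition \ref{prop: HN stratification with 3 strata}: one checks that every remaining indecomposable $\Lambda$-module receives a nonzero map from some earlier term of the sequence.

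For the lower bound, I would argue geometrically via Theorem \ref{main thm of section three}: any MGS arises from a generic green path $\gamma$ which must travel from the negative to the positive orthant of $\RR^n$. Crossing the $n$ coordinate hyperplanes $H(S_i)=D(S_i)$ already contributes $n$ wall-crossings. For each triangle $T$ in $Q$ with vertices $i,j,\ell$, the Jacobian relations produce three length-two Schurian modules whose stability walls carve out a bounded compartment inside the three-dimensional coordinate subspace spanned by $i,j,\ell$; a local convex-geometry argument shows that any green path, restricted to this subspace, must cross at least one wall strictly beyond the three hyperplanes $H(S_i), H(S_j), H(S_\ell)$. Since the triangles of a type $A_n$ cluster quiver are combinatorially disjoint (no two triangles share an arrow), these local contributions add independently, producing $k$ extra wall-crossings and a total length of at least $n+k$.

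The principal obstacle is the final disjointness/additivity claim in the lower bound: showing that wall-crossings forced by distinct triangles cannot be shared or routed around simultaneously by a clever green path. The cleanest route is probably an induction on $k$ using Proposition \ref{prop: iterated mutation}: mutating at a carefully chosen Schurian associated to one triangle reduces $\Lambda$ to a cluster-tilted algebra of type $A_n$ with exactly one fewer triangle, with the forward hom-orthogonal sequence losing exactly one term, and the base case $k=0$ is the well-known fact that a hereditary algebra of type $A_n$ has minimum MGS length $n$. Verifying that such a mutation genuinely reduces $k$ by one, preserves the type $A_n$ structure, and induces a length-preserving bijection between MGSs of the two algebras (modulo the one term peeled off) is the technical heart of the argument.
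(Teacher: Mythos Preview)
The paper does not prove this theorem; it is quoted from \cite{CDRSW} (Cormier--Dillery--Resh--Serhiyenko--Whelan), whose argument is combinatorial, based on the model of cluster-tilted algebras of type $A_n$ by triangulations of polygons. So there is no ``paper's own proof'' to compare against, and your proposal is an attempt at an independent, representation-theoretic/geometric proof.

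Your upper bound sketch is on the right track but imprecise. The explicit minimum sequence in Example~\ref{eg: A5 tilted algebra example} is $S_1,S_4,P_1,S_3,P_4,S_2,S_5$: the extra $k$ modules are not pulled back from the tilted algebra $C$ via $\Lambda\onto C$, they are precisely the $\Lambda$-modules supported on the deleted arrows (the ones \emph{not} in $C\text{-}mod$). You should build the sequence directly in $\Lambda\text{-}mod$, and the maximality check is more delicate than a single appeal to Proposition~\ref{prop: HN stratification with 3 strata}.

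Your lower bound has a genuine gap. The phrase ``any green path, restricted to this subspace'' does not make sense: a green path lives in $\RR^n$ and cannot be restricted to a $3$-dimensional coordinate subspace while remaining a green path, nor do the walls $D(M)$ for the length-two modules of a triangle carve out a bounded region in $\RR^n$. More seriously, the induction you propose via Proposition~\ref{prop: iterated mutation} does not go through as stated. That proposition peels off the \emph{initial} terms $M_1,\ldots,M_m$ of a forward hom-orthogonal sequence, so to reduce $k$ by one you would need to know that every minimum-length MGS begins with (or can be rotated to begin with) a specific Schurian attached to a chosen triangle. Nothing in the paper guarantees this, and it is essentially equivalent to what you are trying to prove. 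Moreover, a single mutation $\mu_k$ of a type $A_n$ cluster quiver need not decrease the number of triangles by exactly one; depending on the local configuration it can leave $k$ unchanged or change it by more. You would need a careful case analysis of how mutation interacts with the triangle count, which is exactly the combinatorics \cite{CDRSW} handles via the polygon model.
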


Using the equivalent formulation of MGSs in terms of FHO sequences of modules (Corollary \ref{cor: main thm of section 2}) we will obtain an upper bound for the maximum length of a MGS.

\begin{prop}\label{prop: maximal and minimal mgs}
Let $m,p$ be the maximum and minimum lengths of maximal green sequences for any Jacobian algebra $\Lambda=J(Q,W)$ of finite representation type. Then $m+p-n$ is at most equal to the number of isomorphism classes of indecomposable $\Lambda$-modules.  
\end{prop}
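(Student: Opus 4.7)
The plan is to leverage the three-way equivalence from the main theorem and carefully count walls. By Corollary \ref{cor: main thm of section 2} combined with Theorem \ref{main thm of section three}, a maximum MGS of length $m$ corresponds to a maximal forward hom-orthogonal sequence $M_1,\ldots,M_m$ of Schurian $\Lambda$-modules, which in turn corresponds to a generic green path $\gamma_{\max}$ crossing the walls $D(M_1),\ldots,D(M_m)$; set $A=\{M_1,\ldots,M_m\}$. Fix similarly a minimum MGS with wall set $B$, $|B|=p$. Each generic green path must cross every simple wall $D(S_i)=H(S_i)=\{x_i=0\}$ exactly once: $\gamma(t)$ lies in $\RR^n_{>0}$ for $t\gg 0$ and in $\RR^n_{<0}$ for $t\ll 0$, so every coordinate changes sign, and the green monotonicity condition $\gamma'(t)\cdot\undim M>0$ at each crossing prevents any wall from being crossed more than once. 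Hence $\{S_1,\ldots,S_n\}\subseteq A\cap B$, and since walls are indexed by distinct indecomposable Schurian modules, $|A\cup B|\le N$.

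The heart of the argument is to produce some maximal forward hom-orthogonal sequence $T$ whose modules all lie in $\{S_1,\ldots,S_n\}\cup \mathcal{N}$, where $\mathcal{N}$ denotes the collection of Schurian $\Lambda$-modules not in $A$. Given such a $T$, its length is bounded above by $n+|\mathcal{N}|=n+(N-m)$ by the size of the supporting set, while it is bounded below by $p$ through the minimality of $p$; combining yields $p\le n+N-m$, i.e., $m+p-n\le N$. To build $T$, I would start from an arbitrary forward hom-orthogonal ordering of the simples (any permutation works since $\Hom(S_i,S_j)=0$ for $i\ne j$) and greedily extend by inserting Schurian modules drawn only from $\mathcal{N}$; Proposition \ref{prop: iterated mutation} lets us track how the admissible insertions in $\mathcal{N}$ transform after each step via iterated mutation of $\Lambda$.

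The main obstacle is verifying that this greedy extension never gets stuck: whenever the current partial sequence is not yet maximal, we must ensure that at least one legal insertion is available from $\mathcal{N}$, rather than being forced to use a module from $A\setminus\{S_1,\ldots,S_n\}$. This should follow from the finite-type structure of $\Lambda$, in particular from Theorem \ref{thm 3} (characterizing the positive and negative walls of each compartment) together with the strongly-Schurian property from Theorem \ref{thm: walls are strongly schurian}, which combined should prevent the pathological configuration in which every legal extension lies inside $A$.
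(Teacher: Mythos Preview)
Your overall counting strategy matches the paper's: construct a second maximal forward hom-orthogonal sequence $T$ that overlaps the maximum-length one $A$ only in the simples, and then bound $|A\cup T|\le N$. However, there is a genuine gap at exactly the point you flag as the ``main obstacle''. Starting the simples in an \emph{arbitrary} order and then greedily inserting only from $\mathcal N$ can get stuck. Already for the $A_2$ quiver $1\to 2$ the unique maximum sequence is $S_1,P_1,S_2$, so $\mathcal N=\emptyset$; if you seed $T$ with the simples in the order $S_1,S_2$ (the same order as in $A$) then $P_1$ can still be inserted between them, so $S_1,S_2$ is not maximal, yet there is nothing in $\mathcal N$ available to insert. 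Your appeals to Theorem~\ref{thm 3} and Theorem~\ref{thm: walls are strongly schurian} do not rule this out.

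The missing idea, which is what the paper actually does, is to seed $T$ with the simples in the \emph{reverse} of the order in which they occur in $A$. Then no non-simple $X\in A$ can be inserted into $T$ at all: such an $X$ has a simple submodule $S_j$ and a distinct simple quotient $S_k$, and in any forward hom-orthogonal sequence containing all three one is forced to have $S_k$ before $X$ before $S_j$; since $S_k$ precedes $S_j$ in $A$, the reversed seed places $S_j$ before $S_k$, leaving no slot for $X$. With this choice of seed, \emph{every} completion to a maximal sequence automatically avoids $A\setminus\{S_1,\ldots,S_n\}$, so the greedy step and the obstacle you worried about simply disappear.
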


\begin{proof} Let $M_1,\cdots,M_m$ be a FHO sequence of Schurian modules of maximal length. Construct another sequence of the $n$ simple $\Lambda$-modules in reverse order as they appear in the sequence $(M_i)$. Since $\Lambda$ has finite representation type, we can extend this to a complete FHO sequence $N_1,\cdots,N_q$ by inserting modules into the sequence. Then $q\ge p$ by definition of $p$.

Claim: A module $X$ appears in both lists if and only if $X$ is simple. 

Pf: Suppose $X$ is not simple. Let $S_j$ be a simple submodule of $X$ and $S_k$ a simple quotient module of $X$. Since $X$ is Schurian, $S_j\neq S_k$. If $X=M_s$ then $S_k<M_s<S_j$ in the first FHO sequence $(M_i)$. If $X=N_t$ then $S_k<N_t<S_j$ in $(N_j)$. This is not possible since $S_k,S_j$ are in the opposite order in the two sequences.

From this it follows that the union of the sets $\{M_i\}$ and $\{N_j\}$ has $m+q-n\ge m+p-n$ elements, proving the Proposition.
\end{proof}

Here is another proof of the Claim in the proof of Proposition \ref{prop: maximal and minimal mgs} using generic green paths. Let $\gamma$ be a generic green path crossing the walls $D(M_1),\cdots,D(M_m)$ in that order. Let $\gamma'$ be another generic green path crossing the walls $D(N_1),\cdots,D(N_q)$. By Theorem \ref{main thm of section three} such paths exist. Suppose that $\gamma(t_0)\in D(X)$. If $S_j\subset X$ then 
\[
	\gamma(t_0)\cdot \undim S_j=\gamma(t_0)_j<0
\]
and similarly, $\gamma(t_0)_k>0$ for all simple quotients $S_k$ of $X$. So, $\gamma$ crosses the hyperplane $D(S_k)=H(S_k)$ before time $t_0$ and will cross $D(S_j)=H(S_j)$ afterwards. Since $\gamma'$ crosses these hyperplanes in the reverse order, $\gamma'$ does not meet $D(X)$. Therefore, the paths $\gamma,\gamma'$ cannot cross the same semistability set $D(X)$ except for the hyperplanes $D(S_i)=H(S_i)$.

\begin{cor}\label{cor: upper bound for mgs of An}
If $\Lambda=J(Q,W)$ is cluster-tilted of type $A_n$, the length of a maximal green sequence is at most $\binom {n+1}2-k$ where $k$ is the number of oriented 3-cycles in the quiver $Q$.
\end{cor}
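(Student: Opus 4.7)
The plan is to combine Proposition \ref{prop: maximal and minimal mgs} with the minimum-length theorem stated just above it, after identifying the count of indecomposable $\Lambda$-modules in the cluster-tilted $A_n$ case. Let $m$ and $p$ denote the maximum and minimum lengths of maximal green sequences for $\Lambda=J(Q,W)$. Proposition \ref{prop: maximal and minimal mgs} gives
\[
	m + p - n \le \#\{\text{iso. classes of indecomposable }\Lambda\text{-modules}\}.
\]
By the theorem just quoted from \cite{CDRSW}, for cluster-tilted $A_n$ one has $p = n+k$, where $k$ is the number of $3$-cycles (triangles) in $Q$. Substituting this in makes the inequality read $m + k \le \#\,\text{ind}\,\Lambda$.

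The remaining step is to verify that a cluster-tilted algebra of type $A_n$ has exactly $\binom{n+1}{2}$ indecomposable modules. I would argue this via the cluster category $\cC_H$ of a hereditary algebra $H$ of type $A_n$: its indecomposables are the indecomposable $H$-modules (of which there are $\binom{n+1}{2}=n(n+1)/2$) together with the $n$ shifts of indecomposable projectives, giving $n(n+3)/2$ in total. If $T\in \cC_H$ is the cluster-tilting object with $\End_{\cC_H}(T)\cong \Lambda$, the functor $\Hom_{\cC_H}(T,-)$ induces a bijection between indecomposable objects of $\cC_H$ not isomorphic to a summand of $T$ and indecomposable $\Lambda$-modules, so $\#\,\text{ind}\,\Lambda = n(n+3)/2 - n = n(n+1)/2 = \binom{n+1}{2}$.

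Plugging this in gives $m + k \le \binom{n+1}{2}$, i.e., $m \le \binom{n+1}{2} - k$, which is the claim. Since every ingredient is either cited or a standard fact about cluster categories of type $A_n$, there is no real technical obstacle; the only point one needs to be careful about is that the counting of indecomposables takes place over the cluster-tilted algebra itself, not over the hereditary algebra, and these happen to coincide in type $A_n$ (which is the content of the cluster-category computation above).
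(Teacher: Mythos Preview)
Your argument is correct and follows exactly the route the paper intends: combine Proposition~\ref{prop: maximal and minimal mgs} with the CDRSW value $p=n+k$ and the count of indecomposables. The paper states the corollary without proof, simply taking the fact that a cluster-tilted algebra of type $A_n$ has $\binom{n+1}{2}$ indecomposable modules as well known (cf.\ the remark in Example~\ref{eg: A5 tilted algebra example}); your cluster-category justification of this count is a welcome elaboration but not a different approach.
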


\begin{proof}
By Theorem \ref{thm: minimum length} the minimum length of a maximal green sequence is $p=n+k$. Since every cluster-tilted algebras of type $A_n$ has exactly $\binom{n+1}2$ indecomposable modules up to isomorphism, we have, by Proposition \ref{prop: maximal and minimal mgs}, that
\[
	m+p-n=m+k\le \binom{n+1}2
\]
where $m$ is the maximum length of a maximal green sequence. Thus $m\le \binom{n+1}2-k$.
\end{proof}

\begin{eg}\label{eg: A5 tilted algebra example}
Consider the cluster-tilted algebra $\Lambda=J(Q,W)$ where $Q$ is the quiver below (on the left) and $W$ is given by the two 3-cycles. This is cluster-tilted of type $A_5$ and therefore has $\binom62=15$ modules.
\[
\xymatrixrowsep{10pt}\xymatrixcolsep{20pt}
\xymatrix{
&2\ar[dr]^\alpha &&4\ar[dd] &&&& 2\ar[dr]^\alpha &&4\\
Q:&& 3\ar[ru]^\beta\ar[ld]^\gamma&&&&Q^\delta:&& 3\ar[ru]^\beta\ar[ld]^\gamma\\
&1\ar[uu] && 5\ar[lu]^\delta&&&&1\ar@{--}[uu] && 5\ar[lu]^\delta\ar@{--}[uu]
	}
\]
If we remove the two unlabeled arrows in $Q$ we are left with the quiver $Q^\delta$ indicated above on the right, with the relations $\gamma\alpha=0=\beta\delta$. By Assem's criterion \ref{Assem's theorem}, this is the quiver with relations of a tilted algebra $C$. The Auslander-Reiten quiver of $C$ is given by:
\[
\xymatrixrowsep{8pt}\xymatrixcolsep{15pt}
\xymatrix{
&&&P_5\ar[rd]\\
S_4\ar[dr]\ar@{--}[rr] &&3\atop1 \ar@{--}[rr]\ar[dr]\ar[ur] && 5\atop3\ar[dr]\ar@{--}[rr]&&S_2\\
& P_3\ar@{--}[rr]\ar[dr]\ar[ur] && S_3\ar@{--}[rr]\ar[dr]\ar[ur] && I_3\ar[dr]\ar[ur] \\
S_1\ar[ur]\ar@{--}[rr] && 3\atop4\ar@{--}[rr]\ar[dr]\ar[ur] &&2\atop3 \ar[ur]\ar@{--}[rr]&&S_5\\
&&&P_2\ar[ru]
	}
\]
This is a full subquiver of the Auslander-Reiten quiver of $\Lambda$ with only two modules missing: the projective $\Lambda$-modules $P_1$ and $P_4$. Consider the partial ordering on this set of 13 modules given by $M_1\prec M_2$ if there is an oriented path in this AR-quiver from $M_1$ to $M_2$. For example, $S_1,S_4$ are minimal in this partial ordering and $S_2,S_5$ are maximal. We refer to this as the \emph{partial ordering given by the AR-quiver}. Any refinement of this partial ordering to a total ordering will be called \emph{one of the total orderings given by the AR-quiver}.

Since $\Hom_\Lambda(M_1,M_2)\neq 0$ implies $\Hom_C(M_1,M_2)\neq 0$ implies $M_1\preceq M_2$, weakly FHO sequences for both $\Lambda$ and $C$ with $13$ objects can be given by arranging these 13 objects in the \emph{reverse order of any total ordering given by the AR-quiver}. In any such sequence one of $S_2,S_5$ will be first and one of $S_1,S_4$ will be last. These maximal green sequences for $\Lambda$ have the maximum length since, by Corollary \ref{cor: upper bound for mgs of An}, the length of any MGS is at most $15-k=13$. Therefore, they are complete FHO sequences.

Putting the simple modules in reverse order (with $S_1,S_4$ first, in either order, and $S_2,S_5$ last, in either order) and inserting the missing modules $P_1,P_4$ we get the minimum length complete FHO sequence in $\Lambda\text-mod$: $S_1,S_4,P_1,S_3,P_4,S_2,S_5$ of length $n+k=5+2=7$.
\end{eg}

Our second example, from \cite{AI}, is a cluster-tilted algebra of type $D_n$ (for $n\ge4$). In this example, we use the following terminology. Two MGSs as said to be \emph{equivalent} if they have the same set of modules in their corresponding complete FHO sequences, i.e., the corresponding sequences of $c$-vectors are permutations of each other.

\begin{eg}\label{eg: Dn}\cite{AI}
Let $Q_n$ be the oriented cycle quiver with $n$ vertices and let $\Lambda_n$ be $KQ_n$ module $rad^{n-1}KQ_n$. For $n=4$ the quiver is:
\[
\xymatrixrowsep{10pt}\xymatrixcolsep{15pt}
\xymatrix{
Q_4: &1\ar[d]& 2\ar[l]_\alpha \\
&4\ar[r]_\gamma & 3\ar[u]_\beta	}
\]
This cluster-tilted algebra is the relation-extension of the tilted algebra $C$ given by the subquiver
\[
\xymatrixrowsep{15pt}\xymatrixcolsep{15pt}
\xymatrix{
	Q^\delta:&1\ar@{--}@/^2pc/[rrr] &2\ar[l]_{\alpha}&3\ar[l]_{\beta}&4\ar[l]_{\gamma}
	}
\]
of $Q_4$ modulo the relation $\alpha\beta\gamma=0$. The Auslander-Reiten quiver is given by:
\[
\xymatrixrowsep{10pt}\xymatrixcolsep{10pt}
\xymatrix{
&\color{red}P_2\ar[dr]&& P_3 \ar[dr]&& P_4 \ar[dr]&& \color{red}P_1 \ar[dr]&& \color{red}P_2\ar[dr] && P_3\ar[dr] && \\
\cdots\ar[ru]\ar[rd]\ar@{--}[rr]&&2\atop1\ar@{--}[rr]\ar[ru]\ar[rd]&& 3\atop2\ar[ru]\ar@{--}[rr]\ar[rd]&& 4\atop3\ar@{--}[rr]\ar[ru]\ar[rd]&& \color{red}1\atop4\ar@{--}[rr]\ar[ru]\ar[rd]&& 2\atop1\ar@{--}[rr]\ar[ru]\ar[rd]&&\cdots \\
&S_1\ar[ru]\ar@{--}[rr]&& S_2\ar[ru]\ar@{--}[rr]&&S_3\ar[ru]\ar@{--}[rr]&&S_4\ar[ru]\ar@{--}[rr]&&S_1\ar[ru]\ar@{--}[rr]&&S_2\ar[ru]
	}
\]
where five objects are repeated. There are 12 indecomposable $\Lambda$-modules, but only the 9 black objects are modules over the tilted algebra $C=KQ^\delta$. These objects, in the following order, form a complete FHO sequence for $\Lambda$.
\[
	S_4,{\,^4_3}, P_4,S_3, {\,^3_2}, P_3,S_2,{\,^2_1},S_1.
\]
This set of modules can be arranged in four different ways since $P_4,S_3$ and $P_3,S_2$ can be taken in either order. This gives four equivalence MGS's. Similarly, there are three other equivalence classes of maximal green sequences of length 9 given by deleting $P_2,{\,^2_1},P_3$ or $P_3,{\,^3_2},P_4$ or $P_4,{\,^4_3},P_1$. This is an example of the following theorem.
\end{eg}

\begin{thm}\cite{AI}
The quiver $Q_n$ given by a single oriented $n$-cycle has, up to permutation of $c$-vectors, $n$ maximal green sequences of maximal length and these all have length $\binom n2+n-1$.
\end{thm}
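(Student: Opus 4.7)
The plan is to exhibit $n$ explicit maximum-length MGSs coming from the $n$ tilted algebras associated to $\Lambda_n$ and then verify these are the only ones up to permutation of $c$-vectors.

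For each arrow $a$ of $Q_n$, let $C_a = KQ_n^a/(w_a)$, where $Q_n^a$ is the linear $A_n$-quiver obtained by deleting $a$ and $w_a$ is the composition of the remaining $n-1$ arrows; $\Lambda_n$ is the relation-extension of $C_a$ by \cite{ABS}. The indecomposable $C_a$-modules are the $\binom{n+1}{2}-1 = \binom{n}{2}+n-1$ intervals on $Q_n^a$ other than the full one, and they embed into $\Lambda_n\text-mod$ with $\Hom$-sets preserved. Since $C_a$ is representation-directed, listing its indecomposables in reverse Auslander--Reiten order gives a forward hom-orthogonal sequence in $C_a\text-mod$, hence in $\Lambda_n\text-mod$. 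Maximality in $\Lambda_n\text-mod$ follows by checking that each of the $\binom{n-1}{2}$ indecomposable $\Lambda_n$-modules \emph{not} in $C_a\text-mod$, namely the string modules whose support wraps past $a$, admits nonzero maps both from and to suitable $C_a$-modules in the sequence, blocking insertion. By Corollary \ref{cor: main thm of section 2}, each of the $n$ choices of $a$ produces a maximal green sequence for $\Lambda_n$ of length $\binom{n}{2}+n-1$.

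For the upper bound, Proposition \ref{prop: maximal and minimal mgs} gives $m+p-n \le n(n-1)$, where $m$ and $p$ denote the maximum and minimum MGS lengths for $\Lambda_n$ and $n(n-1)$ is the total number of indecomposable $\Lambda_n$-modules. A direct construction of a minimum-length forward hom-orthogonal sequence, using the $n$ simples in a compatible partial order together with enough indecomposable projectives to resolve the obstructions imposed by the cyclic structure of $Q_n$, would yield $p \ge \binom{n}{2}+1$ and hence $m \le \binom{n}{2}+n-1$, with equality forced by the lower bound in the previous paragraph.

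For uniqueness, by Theorem \ref{main thm of section three} every maximum-length MGS corresponds, up to $c$-vector permutation, to a maximum-length forward hom-orthogonal sequence $M_1,\ldots,M_m$ whose complement in the set of indecomposable $\Lambda_n$-modules has exactly $\binom{n-1}{2}$ elements. I would show by direct analysis of $\Hom$-spaces between cyclic string modules that if this complement contains a wrapping module $X$ whose support strictly contains the arrow $a$, then every other wrapping module for the \emph{same} arrow $a$ must also lie in the complement, via Hom-obstructions forcing the excluded modules to be compatible with a single choice of cut; a counting argument then pins the complement to the set of modules wrapping past $a$. This yields a bijection between maximum-length MGSs (up to permutation of $c$-vectors) and the $n$ arrows of $Q_n$. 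The main obstacle will be this uniqueness step: the combinatorial verification that the complement is \emph{coherent} in wrapping past a single arrow rather than a mixture, which requires a careful case analysis of overlapping string modules on the cycle, leveraging the relation $rad^{n-1}=0$ to pin down which composite maps vanish.
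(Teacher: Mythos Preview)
This theorem is not proved in the present paper; it is stated with a citation to \cite{AI}. So there is no in-paper proof to compare against, and I assess your outline on its own.

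Your lower bound is fine and follows the template of Lemma~\ref{lem: lower bound for MGSs} and the argument of Theorem~\ref{thm: MSG for An}: each cut of $Q_n$ yields a tilted algebra whose $\binom{n}{2}+n-1$ indecomposables form a maximal forward hom-orthogonal sequence in $\Lambda_n\text{-}mod$.

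The upper bound, however, contains a genuine error. You propose to use Proposition~\ref{prop: maximal and minimal mgs}, which gives $m+p-n\le n(n-1)$, together with the bound $p\ge\binom n2 +1$. First, your justification is the wrong way round: constructing a particular maximal green sequence bounds $p$ from \emph{above}, not below; a lower bound on the minimum length requires showing that \emph{every} MGS has length at least $\binom n2 +1$. Second, and decisively, this inequality is false. For $n=4$ (the algebra of Example~\ref{eg: Dn}), the sequence
\[
S_1,\ {1\atop4},\ S_2,\ P_1,\ S_3,\ S_4
\]
is a maximal forward hom-orthogonal sequence of length $6$, whereas $\binom 42+1=7$. (One checks directly that $\Hom$ vanishes forward along this list, and that none of the remaining six indecomposables $S_4$-socled or otherwise can be inserted: each is blocked either by the position of its simple top and socle among $S_1,\dots,S_4$, or because ${1\atop4}$ already sits as a submodule.) Hence Proposition~\ref{prop: maximal and minimal mgs} yields only $m\le 10$ for $n=4$, not the required $m\le 9$, and your route to the upper bound cannot succeed. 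A different mechanism is needed---for instance one in the spirit of the final example of the Appendix, where one exhibits enough pairwise disjoint oriented cycles in the AR-quiver to force the required number of omissions.

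The uniqueness step is, as you acknowledge, only a plan; but since the upper bound already fails, that portion would need to be rethought regardless.
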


\subsection{Lower bound for maximal length of MGSs}

The two examples above illustrate the following lower bound for the maximal length of maximal green sequences for a cluster-tilted algebra of finite representation type. We use the fact that every cluster-tilted algebra $\Lambda$ is the relation-extension of a tilted algebra $C$ which is usually not uniquely determined.

\begin{lem}\label{lem: lower bound for MGSs}
Let $\Lambda=J(Q,W)$ be a cluster-tilted algebra of finite representation type which is the relation-extension of a tilted algebra $C$. Then $Q$ has a maximal green sequence of length greater than or equal to the number of indecomposable $C$-modules.
\end{lem}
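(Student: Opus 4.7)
The plan is to translate the claim via Corollary \ref{cor: main thm of section 2}, which equates maximal green sequences for $\Lambda$ with maximal forward hom-orthogonal sequences of Schurian $\Lambda$-modules. It therefore suffices to exhibit a \emph{weakly} forward hom-orthogonal sequence of Schurian $\Lambda$-modules whose length equals the number $r$ of indecomposable $C$-modules: any such sequence can be enlarged to a maximal forward hom-orthogonal one by repeatedly inserting Schurian modules at any position where Hom-vanishing is preserved. Since $\Lambda$ has finite representation type this process terminates, it cannot decrease the length, and when it stops every insertion is blocked, which forces $\cF(M)=0$ and hence maximality in the sense of Definition \ref{def of forward hom-orthogonal}.

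I would build the initial sequence inside $C\text-mod$ and then transport it. Since $\Lambda$ is cluster-tilted of finite type, we may assume $C=\End_H(T)$ for a hereditary algebra $H$ of Dynkin type and a tilting module $T\in H\text-mod$. By the classical theorem of Happel--Ringel, $C$ is then representation-directed: its $r$ indecomposables admit a total order $M_1,\ldots,M_r$ with $\Hom_C(M_i,M_j)=0$ for all $i<j$, and in particular every indecomposable is a brick (Schurian).

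The last step is to lift this sequence through the relation-extension $\Lambda\cong C\ltimes \Ext^2_C(DC,C)$. The projection $\Lambda\twoheadrightarrow C$ induces a fully faithful embedding $C\text-mod\hookrightarrow \Lambda\text-mod$: a $C$-module lifts uniquely to a $\Lambda$-module on which the ideal $E:=\Ext^2_C(DC,C)$ acts as zero, and any $\Lambda$-homomorphism between such modules is automatically a $C$-homomorphism. Consequently $\Hom_\Lambda(M,N)=\Hom_C(M,N)$ and $\End_\Lambda(M)=\End_C(M)$ for $C$-modules $M,N$, so $M_1,\ldots,M_r$ remains Schurian and weakly forward hom-orthogonal in $\Lambda\text-mod$. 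Combined with the extension step from the first paragraph, this yields a maximal forward hom-orthogonal sequence for $\Lambda$ of length at least $r$, which by Corollary \ref{cor: main thm of section 2} corresponds to a maximal green sequence of length at least $r$. I expect the main subtlety to be invoking representation-directedness of $C$ (and its brick consequence) from the Dynkin hypothesis and correctly identifying the total order on indecomposables; the full-and-faithful embedding $C\text-mod\hookrightarrow \Lambda\text-mod$ is immediate from the trivial-extension structure.
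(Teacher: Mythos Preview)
Your proposal is correct and follows essentially the same strategy as the paper: order the indecomposable $C$-modules into a weakly forward hom-orthogonal sequence of bricks, transport it to $\Lambda\text-mod$ via the full embedding $C\text-mod\hookrightarrow\Lambda\text-mod$, and then complete to a maximal sequence using finite representation type. The only cosmetic differences are in how the two key ingredients are justified: the paper obtains the Hom-ordering from acyclicity of the AR-quiver of $C$ (via derived equivalence to a Dynkin hereditary algebra) rather than citing Happel--Ringel representation-directedness, and it describes the full embedding via the subquiver-with-relations picture rather than the trivial-extension projection $\Lambda\twoheadrightarrow C$; your justification that ``no further insertion possible'' forces $\cF(M)=0$ is actually slightly more explicit than the paper's.
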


\begin{proof}
Since $C$ is tilted of finite representation type it is derived equivalent to a hereditary algebra of finite representation type. Since the bounded derived category of such an algebra has no oriented cycles, the Auslander-Reiten quiver of $C$ has no oriented cycles. (More generally, the AR-quiver of any tilted algebra has an acyclic component \cite{ASS}, VIII.3.5.) Let $M_i$ be the indecomposable $C$-modules arranged in the reverse order of one of the total orderings given by this AR-quiver. Then $\Hom_{C}(M_i,M_j)=0$ for $i<j$. Since the quiver of $C$ is a subquiver of the quiver of $\Lambda$ modulo all relations which are supported on that subquiver, $C\text-mod$ is exactly embedded as a full subcategory of $\Lambda\text-mod$. Thus $\Hom_\Lambda(M_i,M_j)=0$ for $i<j$. Since $\Lambda$ has finite representation type, the sequence $(M_i)$ can be completed to a maximal (complete) FHO sequence which corresponds to a maximal green sequence for $Q$ of the same length by Corollary \ref{cor: main thm of section 2}.
\end{proof}

For the next theorem we need the following lemma about the diagram of a $\Lambda$-module (Definition \ref{def: diagram of a module}) with respect to an associated iterated tilted algebra $C$.

\begin{lem}\label{lem: arrow in diagram gives path in C-mod}
Let $\Lambda=J(Q,W)$ be a cluster-tilted algebra of type $A_n$ and let $C$ be an iterated tilted algebra obtained by deleting one arrow from each 3-cycle in $Q$. Let $X$ be a $\Lambda$ module whose diagram with respect to $C$ is $M_1\xrightarrow\alpha M_2$. Then, there is a sequence of nonzero morphisms of $C$-modules $M_1\to E_1\to E_2\to M_2$.
\end{lem}

\begin{proof} The quiver $Q^\delta$ of $C$ is obtained from the quiver $Q$ of $\Lambda$ by deleting one arrow from each 3-cycle in $Q$. Since the $\Lambda$-modules $X$ has diagram $M_1\xrightarrow\alpha M_2$, there is a 3-cycle
\[
\xymatrixrowsep{15pt}\xymatrixcolsep{10pt}
\xymatrix{
& w\ar[dl]_\beta\\
v_1\ar[rr]^\alpha &&v_2\ar[lu]_\gamma
	}
\]
in $Q$ so that $v_1,v_2$ are vertices in the support of $M_1,M_2$ respectively and $\alpha$ is a deleted arrow. Since $X$ is a string module, the vertex $w$ is not in the support of $X$.

Since $\alpha$ is a deleted arrow, the other two arrows in the 3-cycle $\beta,\gamma$ are within the quiver $Q^\delta$ of $C$. Therefore, $\Ext_{C}(S_w,M_1)$ and $\Ext_{C}(M_2,S_w)$ are nonzero where $S_w$ is the simple $C$-module supported at $w$. Letting $E_1,E_2$ be the extensions: $M_1\to E_1\to S_w$ and $S_w\to E_2\to M_2$ we obtain the chain of nonzero morphisms in $C\text-mod$:
\[
	M_1\to E_1\to E_2\to M_2
\]
as claimed.
\end{proof}

We also need the following well-known fact.

\begin{lem}\cite{AH}\label{lem: AR of C is simply-connected}
The Auslander-Reiten quiver of any iterated tilted algebra of type $A_n$ has no oriented cycles.
\end{lem}


\begin{thm}\label{thm: MSG for An}
Let $C$ be a tilted algebra of type $A_n$ and let $\Lambda$ be the relation-extension of $C$. Then the indecomposable $C$-modules arranged in the reverse order of one of the total orderings given by its Auslander-Reiten quiver form a complete FHO sequence of $\Lambda$-modules.
\end{thm}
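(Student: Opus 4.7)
The plan is to sandwich the length of the sequence between the lower bound in Lemma \ref{lem: lower bound for MGSs} and the upper bound in Corollary \ref{cor: upper bound for mgs of An}. Let $\ell$ denote the number of indecomposable $C$-modules. By Lemma \ref{lem: lower bound for MGSs}, ordering them $M_1,\ldots,M_\ell$ from right to left in the Auslander-Reiten quiver of $C$ produces a forward hom-orthogonal sequence of $\Lambda$-modules, and this sequence can be completed to a maximal forward hom-orthogonal sequence in $\Lambda\text-mod$. So the goal is to show that no completion is needed.

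The key step is the counting identity $\ell = \binom{n+1}{2} - k$, where $k$ is the number of $3$-cycles in the quiver of $\Lambda$. One ingredient is that a cluster-tilted algebra of type $A_n$ has exactly $\binom{n+1}{2}$ indecomposable modules (obtained by counting indecomposables in the cluster category of type $A_n$ and subtracting the $n$ summands corresponding to $T[1]$). The second ingredient uses Assem's description of tilted algebras of type $A_n$ in \cite{A}: the relations of $C$ correspond bijectively to the $3$-cycles of $\Lambda$, and each such $3$-cycle accounts for exactly one indecomposable $\Lambda$-module that does not lie in the fully embedded subcategory $C\text-mod\hookrightarrow \Lambda\text-mod$, namely the indecomposable whose support ``goes around'' the triangle.

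Combining these, the sequence $M_1,\ldots,M_\ell$ has length $\binom{n+1}{2} - k$, which by Corollary \ref{cor: upper bound for mgs of An} (together with Corollary \ref{cor: main thm of section 2}) is the maximum possible length of any maximal forward hom-orthogonal sequence for $\Lambda$. Since Lemma \ref{lem: lower bound for MGSs} extends our sequence to a maximal one of length at least $\ell$, and no such maximal sequence has length exceeding $\ell$, the extension must be trivial: $M_1,\ldots,M_\ell$ is itself a maximal forward hom-orthogonal sequence for $\Lambda$.

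The main technical obstacle is making the counting identity rigorous, that is, exhibiting the bijection between $3$-cycles of the quiver of $\Lambda$ and the indecomposable $\Lambda$-modules that are not $C$-modules. For an explicit triangle with vertices $i,j,k$ and cut arrow (say) $k\to i$, one expects the ``missing'' indecomposable to be the unique $\Lambda$-module supported on the triangle that uses the cut arrow nontrivially. Verifying this uniformly across all tilted algebras of type $A_n$ is where Assem's classification must be invoked; once this step is complete, the rest of the argument is immediate from the machinery of Sections~2 and~3.
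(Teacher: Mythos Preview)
Your sandwich argument does not close, because the counting identity $\ell=\binom{n+1}{2}-k$ is false in general. It is \emph{not} true that each triangle of $Q$ contributes exactly one indecomposable $\Lambda$-module outside $C\text{-}mod$; the number of string modules supported on a given deleted arrow depends on how far the string can be extended past that arrow, which in turn depends on the global shape of $Q$. The paper's own $A_9$ example in the Appendix makes this explicit: there $k=4$, but deleting the four arrows $\alpha,\beta,\gamma,\delta$ removes $1+3+3+1=8$ indecomposables (the interior arrows $\beta,\gamma$ each support three string modules), so $\ell=\binom{10}{2}-8=37$, whereas $\binom{n+1}{2}-k=41$. Thus the upper bound of Corollary~\ref{cor: upper bound for mgs of An} is strictly larger than $\ell$, and your sandwich leaves a gap of $4$; you cannot conclude that the completion in Lemma~\ref{lem: lower bound for MGSs} is trivial.

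More conceptually, the theorem asserts only that the sequence of $C$-modules is \emph{maximal} (nothing can be inserted), not that it has \emph{maximum length} among all maximal forward hom-orthogonal sequences; a length bound alone cannot establish maximality unless the bound is sharp. The paper instead argues directly: using the string-algebra structure of $\Lambda$, any indecomposable $\Lambda$-module $X$ not in $C\text{-}mod$ decomposes (as a $C$-module) into pieces $M_0,\ldots,M_s$ glued by deleted arrows, and one shows that a source piece $M_0$ admits a chain of nonzero $C$-morphisms to a sink piece $M_s$ via the remaining two arrows of the relevant triangle. Since $X$ also has nonzero maps to and from these pieces, $X$ cannot be inserted anywhere in the sequence. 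This is the argument you need to supply; the numerics are a red herring.
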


\begin{proof}
Let $\Lambda=J(Q,W)$ and let $Q^\delta$ be the quiver of $C$. Since the AR-quiver of $C$ contains no oriented cycles, the indecomposable $C$-modules can be arranged in a weakly FHO sequence. We claim that any such sequence is maximal and thus complete.

To prove this claim, let $X$ be any indecomposable $\Lambda$-module which is not a $C$-module. Then $X$ is a string module having at least one deleted arrow in its support. In other words, the diagram of $X$ has at least one arrow
\[
	\cdots \to\bullet\leftarrow M_j\xrightarrow\alpha M_{j+1}\leftarrow \bullet\to\cdots
\]

By Assem's Theorem \ref{Assem's theorem}, the arrows in the diagram of $X$ alternate in orientation. Thus $M_j$ is a source and $M_{j+1}$ is a sink. This implies that we have nonzero morphisms $M_{j+1}\to X$ and $X\to M_j$. In order to insert $X$ into the weakly FHO sequence of indecomposable $C$-modules, the $C$-module $M_j$ must come before $M_{j+1}$ in the sequence since $X$ cannot come before $M_j$ or after $M_{j+1}$.

By Lemma \ref{lem: arrow in diagram gives path in C-mod}, there is a sequence of nonzero morphisms of $C$-modules $M_j\to E_1\to E_2\to M_{j+1}$. Therefore, in any weakly FHO sequence containing all indecomposable $C$-modules, $M_j$ must come after $M_{j+1}$. Thus $X$ cannot be added to this sequence. So, the sequence is complete as claimed.
\end{proof}

Theorem \ref{thm: MSG for An} extends easily to the case when $C$ is iterated tilted of type $A_n$.

\begin{cor}\label{MSG for iterated tilted}
Let $\Lambda=J(Q,W)$ be a cluster-tilted algebra of type $A_n$. Let $C$ be an associated iterated tilted algebra given by deleting one arrow from each 3-cycle of $Q$. Then the indecomposable $C$-modules can be arranged into a complete FHO sequence for $\Lambda$.
\end{cor}

\begin{proof}
The proof is the same as for Theorem \ref{thm: MSG for An} with the exception that the diagram of a $\Lambda$-module $X$ might not be alternating. In that case, we take the longest sequence of arrows in the diagram of $X$ which are pointing in the same direction, say
\[
	\cdots \to\bullet\leftarrow M_j\xrightarrow{\alpha_j}M_{j+1}\xrightarrow{\alpha_{j+1}}M_{j+2}\rightarrow\cdots\rightarrow M_k\leftarrow \bullet\to\cdots.
\]
Again, we have nonzero morphisms $M_k\to X$ and $X\to M_j$. By Lemma \ref{lem: arrow in diagram gives path in C-mod}, there are sequences of nonzero morphism in $C\text-mod$:
\[
	M_j\to E_1\to E_2\to M_{j+1}\to E_3\to \cdots \to M_k
\]
So, $M_j$ must comes after $M_k$ in any weakly FHO sequence of all indecomposable $C$-modules and $X$ cannot be inserted into such a sequence.
\end{proof}

\subsection{Conjecture} 

Examination of Example \ref{eg: A5 tilted algebra example} with all possible cut sets suggests that iterated tilted algebras will have fewer modules and that the maximum length MGS is likely to be given by a tilted algebra. This lead to the following conjecture about the size and nature of maximal green sequences of maximal length for cluster-tilted algebras.

\begin{conj}[I-Todorov]\label{conjectural description of maximal MGSs}
Let $\Lambda=J(Q,W)$ be a cluster-tilted algebra of finite representation type and let $C$ be one of the associated tilted algebras.
\begin{enumerate}
\item[(a)] The indecomposable $C$-modules can be arranged to form a complete FHO sequence of $\Lambda$-modules whose dimension vectors, by Corollary \ref{cor: main thm of section 2}, form the $c$-vectors of a maximal green sequence for $Q$.
\item[(b)] The longest maximal green sequence for $Q$ is given in this way.
\end{enumerate}
\end{conj}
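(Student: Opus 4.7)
The plan is to split the conjecture into its two parts and treat (a) first. For (a) I would extend the strategy of Theorem \ref{thm: MSG for An} from type $A_n$ to all Dynkin types. Fix a tilted algebra $C$ with $\Lambda \cong C \ltimes \Ext_C^2(DC,C)$, and order the indecomposable $C$-modules $M_1,\ldots,M_s$ from right to left in the AR-quiver of $C$. Since $C$ is derived equivalent to a hereditary algebra of finite Dynkin type, its AR-quiver is acyclic, so $\Hom_C(M_i,M_j)=0$ for $i<j$. Because the inclusion $C\text{-mod} \hookrightarrow \Lambda\text{-mod}$ is fully faithful and exact \cite{ABS}, this gives $\Hom_\Lambda(M_i,M_j)=0$, and $(M_i)$ becomes a weakly forward hom-orthogonal sequence of Schurian $\Lambda$-modules. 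The content of (a) is therefore maximality: no Schurian $\Lambda$-module $X\notin C\text{-mod}$ can be inserted.

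To prove maximality I would adapt the ``diagram'' argument of Theorem \ref{thm: MSG for An}. The quiver $Q^\delta$ of $C$ is obtained from the Gabriel quiver $Q$ of $\Lambda$ by deleting one arrow $\alpha$ from each chordless oriented cycle appearing in $W$. For such an $\alpha:a\to b$, the complementary path in the cycle lies entirely in $Q^\delta$, and the Jacobian relations $\partial_{\bullet}W=0$ produce a distinguished chain of nonzero $C$-module morphisms from a $C$-module whose top contains $S_b$ to one whose socle contains $S_a$. Any Schurian $X\notin C\text{-mod}$ uses at least one deleted $\alpha$ in its support; unpacking $X$ into its $C$-module pieces along the cut (the general analogue of the diagram \eqref{diagram of X}) produces nonzero maps $M_i \to X \to M_j$ with $i<j$, obstructing insertion. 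The relevant diagram is a string in type $A$, acquires one branch in type $D$, and is short enough in type $E$ to enumerate directly; in each case the necessary input is a combinatorial description of the indecomposables of $\Lambda$ compatible with the cut $Q^\delta$. Combined with Corollary \ref{cor: main thm of section 2}, this yields (a).

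For (b), assume (a) and let $s^{*}=\max_C |\ind(C)|$ range over tilted algebras $C$ whose relation-extension is $\Lambda$. Then the maximal MGS length $m$ satisfies $m\ge s^{*}$, and via Proposition \ref{prop: maximal and minimal mgs} the reverse inequality $m\le s^{*}$ is equivalent to the lower bound
\[
	p \ge |\ind(\Lambda)| - s^{*} + n
\]
on the minimum MGS length $p$. I would attempt this geometrically using Section 3: show that any generic green path must cross a wall $D(X)$ for each of the $|\ind(\Lambda)| - s^{*}$ indecomposables $X\notin C^{*}\text{-mod}$, in addition to the $n$ hyperplanes $D(S_i)=H(S_i)$. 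Concretely, for each such extra $X$ I would pin down a pair of simples $S_a,S_b$ for which $D(X)$ is sandwiched between $H(S_a)$ and $H(S_b)$ in every compartment ordering, so that any green path sweeping all of $L(\Lambda)$ is forced across $D(X)$ as well. The hard step, which I expect to be the main obstacle, is exactly this lower bound on $p$: in type $A_n$ the count of \cite{CDRSW} exploits that each triangle forces one extra mutation, but in types $D$ and $E$ there is no single invariant playing the role of ``number of triangles,'' and different tilted reductions of a fixed $\Lambda$ can have distinct $|\ind(C)|$. Identifying the maximizing $C^{*}$ and pinning down the correct combinatorial count of extra indecomposables, together with the wall-sandwich argument, is where the real work lies.
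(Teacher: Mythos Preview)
This statement is a \emph{conjecture} in the paper, not a theorem: the paper gives no proof of either part in general. What the paper does establish is part (a) in type $A_n$ only (Theorem \ref{thm: MSG for An}), using the string-algebra structure of cluster-tilted $A_n$ to decompose any $\Lambda$-module $X\notin C\text{-}mod$ along deleted arrows and produce the obstructing chain of $C$-morphisms. Your outline for (a) is exactly this argument, and for type $A_n$ it coincides with the paper's proof. The extension you sketch to types $D$ and $E$ (``acquires one branch,'' ``enumerate directly'') is not carried out anywhere in the paper and is genuinely open there; in particular the ``diagram'' of $X$ depends on $\Lambda$ being a string algebra, which fails already in type $D$, so your adaptation would need a replacement for that decomposition, not just a longer case analysis.

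For (b) the paper offers no general argument at all. Its evidence consists of examples (Examples \ref{eg: A5 tilted algebra example}, \ref{eg: Dn}, and the $A_9$ example) where the upper bound on $m$ is obtained either from Corollary \ref{cor: upper bound for mgs of An} or by exhibiting pairwise-disjoint oriented cycles in the AR-quiver of $\Lambda$, each of which forces one module to be omitted from any forward hom-orthogonal sequence. Your proposed route via Proposition \ref{prop: maximal and minimal mgs} is different and has a real gap: the proposition gives only the inequality $m+p-n\le|\ind(\Lambda)|$, so the bound $p\ge|\ind(\Lambda)|-s^{*}+n$ would be \emph{sufficient} for $m\le s^{*}$, not equivalent to it. More seriously, your wall-sandwich claim---that every generic green path must cross $D(X)$ for \emph{every} $X\notin C^{*}\text{-}mod$---is much stronger than what is needed and is not established anywhere; the paper's examples instead bound $m$ directly by counting unavoidable omissions, which is a weaker and more tractable statement than a universal lower bound on $p$.
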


Theorem \ref{thm: MSG for An} proves \ref{conjectural description of maximal MGSs}(a) in type $A_n$. 


In type $A_n$, it is easy to count the number of indecomposable modules of any given tilted algebra. It is $\binom {n+1}2$ minus the number of representations of the quiver of the cluster-tilted algebra which are nonzero on some deleted arrow. The set of deleted arrows consists of one arrow from each oriented 3-cycle so that the resulting quiver with relations is tilted of type $A_n$ as described by Assem (Theorem \ref{Assem's theorem}).

\begin{eg}\label{eg: Garver and I}
This example was worked out in detail by Al Garver and K.Igusa (unpublished). What follows is a streamlined argument using the results of this paper.
Let $\Lambda=J(Q,W)$ be the cluster-tilted algebra of type $A_9$ where $Q$ is the following quiver with either of the two possible orientations of each of the two inner 3-cycles.
\[
\xymatrixrowsep{15pt}\xymatrixcolsep{10pt}
\xymatrix{
Q:&& 2\ar[dl]_{\alpha} && 4 && 6 &&8\ar[dl]\\
&1\ar[rr] &&3\ar[lu]\ar@{-}[ru]^\beta\ar@{-}[rr]&& 5\ar@{-}[ru]\ar@{-}[rr]\ar@{-}[lu] && 7\ar[rr]\ar@{-}[lu]_\gamma && 9\ar[lu]_\delta
	}
\]
If we delete the four arrows $\alpha,\beta,\gamma,\delta$, the result is a tilted algebra of type $A_9$ regardless of orientation by Theorem \ref{Assem's theorem}. So, Theorem \ref{thm: MSG for An} applies. When we remove these arrows we remove 8 modules since $\alpha$, $\delta$ each support only one module and $\beta,\gamma$ each support three. (In all iterated tilted cases we remove more than 8.) These eight $\Lambda$-modules, denoted by their supports by $12,34,134,234,67,678,679,89$ appear in the list below. By Theorem \ref{thm: MSG for An} the other $\binom{10}2-8=37$ modules form a complete FHO sequence. We claim that $Q$ has no MGS of length greater than 37. The reason is that $\Lambda\text-mod$ has 8 disjoint oriented cycles of nonzero morphisms: 
\begin{enumerate}
\item $12\to 23\to 31\to 12$
\item $34 \leftrightarrow 45 \leftrightarrow 53 \leftrightarrow 34$ (orientation depending on the orientation of $Q$)
\item $134 \leftrightarrow 457\leftrightarrow 1357\leftrightarrow 134$
\item $234\leftrightarrow 4579 \leftrightarrow 23579\leftrightarrow234$
\item $ 67\leftrightarrow75\leftrightarrow56\leftrightarrow67$
\item $678\leftrightarrow 4578\leftrightarrow456\leftrightarrow678$
\item $679\leftrightarrow3579\leftrightarrow356\leftrightarrow679$
\item $89\to 97\to 78\to 89$
\end{enumerate}
To have a FHO sequence at least one module must be deleted from each of these 8 cycles. So, 37 is the strict upper bound for the length of any MGS for $Q$.
\end{eg}

\section*{Acknowledgements} A preliminary version of this paper was presented at the University of Connecticut in March 2017 and at Idun Reiten's birthday conference in Trondheim in May 2017. The first author thanks Osamu Iyama for his very nice talk on a related topic \cite{DIJ} in Trondheim and for some very useful conversations. He also benefitted from several conversation with Hipolito Treffinger and Al Garver. The second author thanks Ibrahim Assem for his very nice notes about cluster-tilted algebras \cite{A2} which was the inspiration for the conjectural structure of maximal green sequences proposed in the Appendix. We thank the anonymous referees for their numerous very helpful suggestions. Finally, we thank the organizers and participants of the meeting of the Canadian Math Society in Fredericton, NB in June 2018 for their comments and encouragement.

\end{document}

Proposition 2.1 is still \ref{prop: HN stratification with 3 strata}

Corollary 2.3 is still \ref{cor: max weakly forward hom-orth is complete}

Lemma 2.4 is now \ref{lem:when Sk is not one of the Mi}

Lemma 2.6 is now \ref{lem: inductive step A=B for Q finite type}

Lemma 2.7 is now \ref{lem: rotation lemma for modules}

New Proposition 2.7 is now \ref{prop: formula for mutation of W}

Theorem 2.8 is now Theorem \ref{thm: inductive thm implying main thm}

Cor 2.9 is now \ref{cor: main thm of section 2}

Cor 2.10 (rotation lemma) is now \ref{cor: rotation lemma}

Proposition 2.11 is now \ref{prop: iterated mutation}

Remark 3.2 is still \ref{added remark}

Definition 3.4 is now \ref{def: green path}

Corollary 3.7 is now \ref{cor: first wall is simple}

3.16 is now \ref{cor: formerly 3.16}

3.19 is now \ref{prop: maximal and minimal mgs}

Corollary 3.20 is now \ref{cor: upper bound for mgs of An}

Example 3.21 is now \ref{eg: A5 tilted algebra example}

Example 3.22 is now \ref{eg: Dn}

Lemma 3.24 is now \ref{lem: lower bound for MGSs}

Theorem 3.24 is now 

Theorem 3.25 is now \ref{thm: MSG for An}

Conjecture 3.28 is now \ref{conjectural description of maximal MGSs}

Example 3.29 is now \ref{eg: Garver and I}

\end{document}

